\newtheorem{theorem}{Theorem}[section] 
\newtheorem{lemma}[theorem]{Lemma}
\begin{document} 
\sloppy 
\title{~\\[-6ex] Conditional limit theorems for\\ intermediately subcritical branching processes\\ in random environment\thanks{This paper is a part of the  research project 'Branching processes and random walks in random environment'
 supported by the German Research Foundation (DFG) and the 
 Russian Foundation of Basic Research (RFBF, Grant DFG-RFBR 08-01-91954)}} 
\author{\textsc{V.I. Afanasyev}\thanks{Department of 
 Discrete Mathematics, Steklov Mathematical Institute, 8 Gubkin Street, 
 119\,991 Moscow, Russia, viafan@mail.ru, vatutin@mi.ras.ru} 
    \hspace{.8cm} 
 \textsc{Ch. B\"oinghoff}\thanks{Fachbereich Mathematik, Universit\"at Frankfurt, Fach 
     187, D-60054 Frankfurt am Main, Germany, boeinghoff@math.uni-frankfurt.de, kersting@math.uni-frankfurt.de} \\ 
   \textsc{G. Kersting}$^\ddagger$
      \hspace{.8cm} \textsc{V.A. Vatutin}$^\dagger$}
 \maketitle 
\begin{abstract} 
For a branching process in random environment it is assumed that the offspring distribution of the individuals varies in a random fashion, independently from one generation to the other. For the subcritical regime a kind of phase transition appears. In this paper we study the intermediately subcritical case, which constitutes the borderline within this phase transition. We study the asymptotic behavior of the survival probability.  Next the size of the population  and the shape of the random environment conditioned on non-extinction is examined. Finally we show that conditioned on non-extinction periods of small and large population sizes alternate. This kind of 'bottleneck' behavior   appears under the annealed approach only in the intermediately subcritical case.
\end{abstract}  
\begin{small} 
\noindent
\emph{MSC 2000 subject classifications.}  Primary 60J80, Secondary 
 60K37, 60G50, 60F17\\ 
\emph{Key words and phrases.} Branching process, random environment, 
  random walk, change of measure, survival probability, functional limit theorem, tree 
 \end{small} 
\thispagestyle{empty} 
\section{Introduction and main results} 
For a branching process in random environment (BPRE), as introduced in \cite{at,sm}, it is assumed that the offspring distribution of the individuals varies in a random fashion, independently from one generation to the other. Conditioned  on the environment individuals reproduce independently of  each other. Let $Q_{n}$ be the random offspring distribution of an individual at generation~$n-1$ and let $Z_n$ denote the number of individuals at generation $n$. Then $Z_{n}$ is the sum of $Z_{n-1}$ independent random variables, each of which has distribution~$Q_{n}$. To give a formal definition let $\Delta$ be the space of      probability measures on \mbox{$\mathbb{N}_{0}= \{0,1,\ldots\}$}, which equipped with the metric of     total variation is a Polish space. Let $Q$ be a random variable taking values in $\Delta$. Then an infinite sequence $\Pi=(Q_{1}, Q_{2},\ldots)$ of i.i.d.\ copies of  $Q$ is said to form a \emph{random environment}. A sequence of $\mathbb{N}_0$-valued random variables  $Z_{0},Z_{1},\ldots$ is called a 
 \emph{branching process in the random environment} $\Pi$, if $Z_{0}$ is independent of $ \Pi$   and   given $ \Pi$ the  process $Z=(Z_{0},Z_{1},\ldots)$ is a Markov chain with 
\begin{equation}  \label{transition} 
    \mathcal{L} \big(Z_{n} \; \big| \; Z_{n-1}=z, \, \Pi 
   = 
     (q_{1},q_{2},\ldots) \big) \ = \ q_{n}^{*z} 
\end{equation} 
for every $n\in \mathbb{N}=\{1,2,\ldots\}$, $z \in \mathbb{N}_0$ and $q_{1},q_{2},\ldots \in \Delta$, where 
$q^{*z}$  is the $z$-fold convolution of the measure $q$. The   corresponding probability  measure on the underlying probability space will be denoted by $\mathbb{P}$. In the following 
we assume  that the process starts with a single founding ancestor, $Z_0=1$ a.s.,  and (without loss of generality) that $\mathbb{P}\big(Q( 0 )=1\big)  =0 $. (We shorten  $Q(\{y\}), q(\{y\})$ to $Q(y),q(y)$.)  Note that in general~$Z$ is not the superposition of $Z_0$ independent copies of the  process started at $Z_0=1$. 
 
It turns out  that the asymptotic behavior of the generation size  process $Z$ is  determined  in the main 
 by the associated random walk $S=(S_n)_{n\ge 0}$. This random walk has initial state 
$S_{0}=0$ and  increments   $X_n=S_n-S_{n-1}, \, n\ge 1$ defined as 
\[ 
    X_{n} \ = \ \log m(Q_n),  
\] 
where 
\[ 
m(q)\ = \
\sum_{y=0}^{\infty} y  q(y) 
\] 
is the mean of the offspring distribution $q \in \Delta$. 
In view of (\ref{transition}) and the assumption $Z_0=1$ a.s., 
      the conditional expectation of 
 $Z_{n}$      given the environment~$ \Pi$ 
   can be expressed by means of $S$ as 
\begin{equation*} 
  \mathbb{E}[Z_{n} \,| \,  \Pi \,]   \ = \ 
\prod_{k=1}^n m(Q_k)\ = \ \exp(S_n) \quad 
\mathbb{P}\text{--}a.s. 
\end{equation*} 
Averaging over the environment gives 
\begin{align}   
  \mathbb{E}[Z_{n} ]   \ = \ 
 \mathbb{E}[ m(Q) ]^n.  \label{expect2} 
\end{align} 

  If the random walk $S$ drifts to $-\infty$,  then the branching process 
 is said to be   \emph{subcritical}. In case $X=\log  m(Q)$ has finite 
mean, subcriticality corresponds to $\mathbb{E}[X]<0.$ 
 For such processes 
  the conditional non-extinction probability at $n$ 
  decays at an exponential rate for almost every environment. 
This fact is an immediate consequence of the strong law of large numbers and the first moment  
estimate 
\begin{eqnarray} 
   \lefteqn{ \mathbb{P} (Z_{n} > 0 \,| \,  \Pi ) \ =  \ 
\min_{0\le k \leq n} 
       \mathbb{P} (Z_{k} > 0 \, | \,  \Pi )}     \nonumber\\ 
    &\leq & 
  \min_{0\le k \leq n}   \mathbb{E}[ Z_{k} \,| \,  \Pi ]  \ = \ 
\exp\big(\min_{0\le k \leq n} S_{k}\big)  \quad 
\mathbb{P}\text{--}a.s.       \label{gleichung1} 
\end{eqnarray} 

As was  observed by Afanasyev~\cite{af_80} 
   and later independently by Dekking~\cite{de} there are three possibilities for the asymptotic behavior of subcritical branching processes. They are called the \emph{weakly} subcritical, the \emph{intermediately}  subcritical and the 
\emph{strongly} subcritical case.  
The present article is a part of several publications
having started  with \cite{abkv,agkv,agkv2}, in which we try to develop characteristic properties of the different  cases. For a comparative discussion 
 we refer the reader to \cite{bgk}.
 
Here we study the intermediate  case. It is located at the borderline between the weakly and strongly subcritical cases. 
The passage corresponds to a phase transition in the model, thus a particular rich behavior can be expected for the intermediate case. 
This is reflected in our results below. In particular we shall observe a kind of bottleneck phenomenon, which does not occur elsewhere under the annealed approach. Similar behavior
has been noticed under the  quenched approach in the critical regime (see \cite{dy04}, \cite{dy05} and \cite{dy07}).

 \paragraph{Assumption A1. } {\em The process $Z$ is intermediately subcritical, i.e.}
  \[  \mathbb{E} [ Xe^{ X}] \ = \ 0 \ . \] 

\noindent
 The assumption suggests to change from $\mathbb P$  to a measure $\mathbf{P}$: For every $n\in\mathbb{N}$ and every bounded, measurable function $\varphi:\Delta^n\times \mathbb{N}_0^{n+1}\rightarrow\mathbb{R}$,  $\mathbf{P}$ is given by its expectation
\[ \mathbf{E}[ \varphi(Q_1,\ldots,Q_n, Z_0,\ldots,Z_n)] \ = \ \gamma^{-n} \mathbb{E}\big[\varphi(Q_1,\ldots,Q_n, Z_0,\ldots,Z_n)e^{(S_n-S_0)}\big] \ , \] 
with
\[ \gamma \ = \ \mathbb{E} [e^{X}] \ . \]  
(We include $S_0$ in the above expression, because later on we shall also consider cases where $S_0 \neq 0$.) From \eqref{expect2} we obtain 
\begin{align} \mathbb E[Z_n]= \gamma^n \ . 
\label{expect}
\end{align}
The assumption $ \mathbb{E}[Xe^{X}]=0$ translates into
\[ \mathbf{E}[X] \ = \ 0 \ . \]
Thus $S$ becomes a recurrent random walk under $\mathbf{P}$. 

As to the regularity of the distribution of $X$ we make the following assumptions.

\paragraph{Assumption A2.} {\em The distribution of $X$ has  finite variance with respect to $\mathbf{P}$ or  (more generally) belongs to the domain of attraction of some stable law with index $\alpha \in (1,2]$. It is non-lattice.} \\

\noindent
Since $\mathbf{E}[X]=0$ this means that there is an increasing sequence of positive numbers
\[ a_n  \ = \ n^{1/\alpha} \ell_n \]
with a slowly varying sequence $\ell_1,\ell_2,\ldots$ such that for $n\rightarrow\infty$
\[ \mathbf{P}\big(\tfrac 1{a_n}S_n \in dx \, \big) \ \to \ s(x) \, dx\] 
weakly, where $s(x)$ denotes the density of the limiting stable law. Note that due to the change of measure $X^-$ always has finite variance and an infinite variance may only arise from $X^+$. In case of $\alpha < 2$ this is the so-called spectrally positive case (\cite{bi}, Section 8.2.9).

Our last assumption on the environment concerns the standardized truncated second moment of $Q$,
\[ \zeta(a) \ = \ \frac 1 {m(Q)^2}\sum_{y=a}^\infty y^2 Q(y)  \ , \quad a \in \mathbb{N} \ . \]

\paragraph{Assumption A3.} {\em For some $\varepsilon > 0$ and some $a \in \mathbb{N}$}
\[ \mathbf{E} [ (\log^+ \zeta(a))^{\alpha + \varepsilon}] \ < \ \infty \ ,\]
where $\log^+ x= \log(x \vee 1)$.\\\\
See \cite{agkv} for examples where this assumption is fulfilled. In particular our results hold for binary branching processes in random environment (where individuals have either two children or none) and for cases where $Q$ is a.s. a Poisson distribution or a.s. a geometric distribution.\\\\
The following theorem has been obtained under quite stronger assumptions in \cite{af_80,gkv,va_04}. 
Let
\[ \tau_n = \min\{ k \le n \mid S_k \le S_0, S_1, \ldots,S_n\} \]
be the moment, when $S_k$ takes its minimum within $S_0$ to $S_n$ for the first time.
\begin{theorem}\label{survival}
Under Assumptions A1 to A3, there is a constant $0<\theta<\infty$ such that as $n\rightarrow\infty$
\begin{align}
 \mathbb{P}(Z_n>0) \sim \theta \gamma^n \mathbf{P}(\tau_n=n). \nonumber
\end{align}
\end{theorem}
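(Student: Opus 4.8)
The plan is to work under the measure $\mathbf{P}$, where $S$ is a mean-zero recurrent random walk, and to exploit the factorization of the survival probability according to the location $\tau_n$ of the running minimum of $S$. Starting from the first-moment identity $\mathbb{E}[Z_n\mid\Pi]=e^{S_n}$ and the change-of-measure formula $\mathbb{P}(Z_n>0)=\gamma^n\mathbf{E}[e^{-S_n}\mathbf{1}\{Z_n>0\}]$ (obtained by applying the tilt to the bounded functional $e^{-S_n}\mathbf{1}\{Z_n>0\}$; note $0\le e^{-S_n}\mathbf{1}\{Z_n>0\}\le e^{-S_n}$ and by \eqref{gleichung1} this is $\le\exp(-\max_{k\le n}S_k+\min_{k\le n}S_k)$, so integrability is fine), the task reduces to analyzing $\mathbf{E}[e^{-S_n}\mathbf{1}\{Z_n>0\}]$. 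The heuristic is that survival up to time $n$ forces the associated walk to stay essentially above its terminal value, i.e. to have its global minimum at the right endpoint, whence the appearance of $\mathbf{P}(\tau_n=n)$; the constant $\theta$ should emerge as the expectation, under a suitable limiting (two-sided conditioned) environment, of the probability that the branching process survives forever along a walk that stays nonnegative.

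The key steps I would carry out are as follows. First, decompose according to $\tau_n=j$ for $0\le j\le n$: on this event split the environment and the genealogy into the pre-$\tau_n$ part (generations $1$ to $j$, where the walk first descends to its minimum) and the post-$\tau_n$ part (generations $j+1$ to $n$, where the walk stays strictly above $S_j$). By the Markov property of $(Z,\Pi)$ and duality / time-reversal for the random walk increments (the increments are i.i.d.\ under $\mathbf P$, so reversing the first $j$ steps turns a "first strict descending ladder epoch" structure into a "staying nonnegative" structure), rewrite $\mathbf{E}[e^{-S_n}\mathbf{1}\{Z_n>0\}]$ as a sum over $j$ of a product of (i) a factor coming from the first $j$ generations — built from the probability that the branching process, run along a walk conditioned to stay nonnegative and reversed, still has living particles feeding into generation $j$, weighted by $e^{-S_j}$ — and (ii) a factor from the last $n-j$ generations involving $\mathbf{P}$-conditioning of the walk to stay positive. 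Second, identify the limit of the first factor: using Assumption A3 to control the truncated second moments (so that a surviving lineage does not explode or die out atypically), show that as $j\to\infty$ the relevant weighted survival quantity converges to a finite positive constant, which is precisely $\theta$; this is where A3 enters decisively, via a Kesten–Stigum / second-moment argument along the conditioned walk. Third, handle the last-generations factor and the sum over $j$: show that the dominant contribution comes from $j$ close to $n$ (the walk's minimum is attained near the endpoint), that the remaining mass is negligible by a uniform integrability / domination estimate, and that summing reproduces exactly $\mathbf{P}(\tau_n=n)$ up to the constant $\theta$. Combining with the prefactor $\gamma^n$ gives $\mathbb{P}(Z_n>0)\sim\theta\gamma^n\mathbf{P}(\tau_n=n)$.

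The main obstacle, I expect, is Step two — proving that the pre-minimum survival factor converges to a strictly positive finite limit $\theta$, uniformly enough in the tail behavior of the environment. One must show simultaneously that $\theta>0$ (the process genuinely survives with nonvanishing $e^{-S_\cdot}$-weighted probability along a nonnegative-conditioned walk, which requires ruling out that surviving forces the walk too high) and that $\theta<\infty$ (no divergence from environments with heavy offspring tails), and this is exactly the role of the $(\alpha+\varepsilon)$-moment condition A3 on $\log^+\zeta(a)$. A subsidiary technical difficulty is making the duality step rigorous for a branching process and not merely for the walk: the genealogical process is not reversible, so one needs a careful coupling or a "reduced process" / spine argument to transfer between the "walk descends to its first minimum" picture and the "walk conditioned to stay nonnegative" picture. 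The functional limit theorems for random walks conditioned to stay positive (Doney, Bertoin–Doney, Vatutin–Wachtel), together with the renewal-theoretic asymptotics for $\mathbf{P}(\tau_n=n)$, supply the analytic backbone once these probabilistic reductions are in place.
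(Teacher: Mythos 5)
Your high-level strategy is essentially the paper's: change measure so that $\mathbb{P}(Z_n>0)=\gamma^n\mathbf{E}[e^{-S_n};Z_n>0]$, decompose over the location of $\tau_n$, show the mass concentrates on $\tau_n$ close to $n$, and recognize that Assumption A3 is what guarantees the constant is strictly positive. In the paper this is Lemmas 3.2--3.3; the positivity of $\theta$ comes from Agresti's lower bound together with Lemma 3.1 (convergence of $\sum_k\eta_k e^{S_k}$ $\mathbf{P}^-$-a.s.), which is the precise form your ``second-moment argument'' takes.

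However, your step one contains a genuine gap: the contribution from $\{\tau_n=j\}$ is \emph{not} a product of a pre-minimum factor and a post-minimum factor. The two halves communicate through the population size $Z_j$ at the bottleneck. Writing $\psi_m(z)=\mathbf{E}[e^{-S_m};Z_m>0,L_m\ge 0\mid Z_0=z]$, the correct identity is
\[
\mathbf{E}[e^{-S_n};Z_n>0,\tau_n=j]=\mathbf{E}\bigl[e^{-S_j}\,\psi_{n-j}(Z_j);\ \tau_j=j\bigr],
\]
so the post-minimum piece enters as a \emph{function} of $Z_j$ inside the first expectation, not as an independent random-walk probability you can multiply off and sum. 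This forces you to first establish the limiting weighted law $p'$ of $Z_{\tau_n}$ given $\tau_n=n$ (the paper obtains it via the monotone limit $U_\infty(s)=\lim_k(1-f_{k,0}(s))e^{-S_k}$ under $\mathbf{P}^-$ and the continuity theorem for generating functions), and only then identify the constant. Moreover $\theta$ is \emph{not} the single $j\to\infty$ limit of a pre-minimum survival factor; since each fixed value of $n-\tau_n$ contributes, it is the series $\theta=\sum_{m\ge 0}\int\psi_m\,dp'$, consistent with the fact (Theorem 1.3) that $n-\tau_n$ given $Z_n>0$ has a nondegenerate limiting distribution on $\mathbb{N}_0$. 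Finally, your worry about reversing the branching process is unfounded: the paper's duality is applied to the random walk $S$ only, and the branching functional $U_n(s)=(1-f_{n,0}(s))e^{-S_n}$ is deliberately built with the reversed composition $f_{n,0}=f_n\circ\cdots\circ f_1$ precisely so that it is adapted, monotone in $n$ and $\mathbf{P}^-$-a.s.\ convergent after that duality; no spine or genealogical coupling argument is needed at this stage (that machinery only appears later, in the proof of Theorem 1.4).
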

\noindent 
In this form the result holds in the strongly subcritical case too \cite{gl}, however it differs from the corresponding result in the weakly subcritical case \cite{abkv}.
Along the way of proving the subsequent results we also obtain a proof of the above theorem. 
Since $\mathbf{P}(\tau_n=n) \sim 1/b_n$ with
\[ b_n = n^{1-\alpha^{-1}} \ell_n'\]
for some slowly varying sequence $(\ell_n')$ (see Lemma \ref{maxim} below), it follows 
\[  \mathbb{P}(Z_n>0) \sim \theta\frac{\gamma^n}{b_n}  \ .\]

The next theorem deals with the branching process conditioned on survival at time $n$.
\begin{theorem} \label{theoZ_n}
Under Assumptions A1 to A3 the distribution of $Z_n$ conditioned on the event $Z_n >0$ converges weakly to a probability distribution on $\mathbb N$. Also for every $\beta < 1$ the sequence $\mathbb E[Z_n^\beta \mid Z_n>0]$ is bounded .
\end{theorem}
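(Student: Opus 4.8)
The plan is to pass to the tilted measure $\mathbf{P}$, express $Z_n$ through the generating functions of the process, and reduce everything to the behaviour of the associated walk $S$ near its running minimum. Write $h_n(s)=\mathbf{E}[s^{Z_n}\mid Q_1,\dots,Q_n]$, so that $h_n=f_1\circ\cdots\circ f_n$ with $f_k$ the probability generating function of $Q_k$. The Geiger--Kersting identity gives, for $0\le s<1$,
\[
 \frac{1}{1-h_n(s)}\;=\;\frac{e^{-S_n}}{1-s}\;+\;A_n(s),\qquad A_n(s)\;=\;\sum_{k=1}^{n}e^{-S_{k-1}}\,\varphi_k\bigl(h_{k+1,n}(s)\bigr),
\]
where $h_{k+1,n}=f_{k+1}\circ\cdots\circ f_n$ (with $h_{n+1,n}=\mathrm{id}$) and $\varphi_k(t)=\tfrac{1}{1-f_k(t)}-\tfrac{1}{m(Q_k)(1-t)}$ is non-negative, non-decreasing in $t$, and dominated by a functional of $Q_k$ whose relevant moments are finite by Assumption~A3. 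Thus $(1-h_n(s))e^{-S_n}=\bigl(\tfrac{1}{1-s}+e^{S_n}A_n(s)\bigr)^{-1}$, and applying $\mathbb{E}[\psi]=\gamma^{n}\,\mathbf{E}[\psi e^{-S_n}]$ (valid for $\psi$ measurable with respect to the environment) to $\psi=1-h_n(s)$ and to $\psi=1-h_n(0)=\mathbb{P}(Z_n>0\mid\Pi)$ yields
\[
 \mathbb{E}\bigl[s^{Z_n}\bigm|Z_n>0\bigr]\;=\;1-\frac{\mathbf{E}\bigl[\bigl(\tfrac{1}{1-s}+e^{S_n}A_n(s)\bigr)^{-1}\bigr]}{\mathbf{E}\bigl[\bigl(1+e^{S_n}A_n(0)\bigr)^{-1}\bigr]}\,.
\]
Setting $s=0$ gives $\mathbb{E}[\,0^{Z_n}\mid Z_n>0\,]=\mathbb{P}(Z_n=0\mid Z_n>0)=0$, so any weak limit is automatically supported on $\mathbb{N}$.

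For the weak convergence I would analyse the two expectations as $n\to\infty$. Reversing the increments of $S$ from time $n$, the process $(S_n-S_{n-j})_{j=0}^{n}$ is again a random walk under $\mathbf{P}$ (by exchangeability of the increments), the event $\{\tau_n=n\}$ becomes the event that this reversed walk stays strictly negative, and with $\widehat{Q}_i:=Q_{n+1-i}$ one has $e^{S_n}A_n(s)=\sum_{j=1}^{n}e^{\widehat{S}_j}\psi_j(s)$, where $\widehat{S}_j=S_n-S_{n-j}$ and $\psi_j(s)$ is a functional of $\widehat{Q}_1,\dots,\widehat{Q}_j$ only (independent of $n$). Since the reversed walk conditioned to stay negative drifts to $-\infty$ at rate $j^{1/\alpha}$, these sums converge almost surely under the limiting (conditioned) law to $W(s)=\sum_{j\ge1}e^{\widehat{S}_j}\psi_j(s)$. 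Using Lemma~\ref{maxim}, the fact that the contribution of $\{\tau_n<n\}$ to either expectation is $o(\mathbf{P}(\tau_n=n))$, and a uniform integrability estimate, one obtains that both expectations are of order $\mathbf{P}(\tau_n=n)$ and, divided by $\mathbf{P}(\tau_n=n)$, converge; this in particular establishes Theorem~\ref{survival}, with $\theta$ the limit of the denominator. Hence $\mathbb{E}[s^{Z_n}\mid Z_n>0]\to G(s):=1-\Lambda(s)/\Lambda(0)$ for every $s\in[0,1)$, where $\Lambda(s)=\mathbf{E}^{\circ}\bigl[\bigl(\tfrac{1}{1-s}+W(s)\bigr)^{-1}\bigr]$ and $\mathbf{E}^{\circ}$ denotes expectation under the conditioned law. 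Finally $\bigl(\tfrac{1}{1-s}+W(s)\bigr)^{-1}\le 1-s$, so by dominated convergence $\Lambda(s)\to0$ as $s\to1^-$; therefore $G(1^-)=1$, the family $\{\mathcal{L}(Z_n\mid Z_n>0)\}$ is tight, and the weak limit is a genuine probability distribution on $\mathbb{N}$.

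For the fractional moments I would bypass the first moment, which is unbounded. Since $\mathbf{E}[Z_n\mid Z_n>0,\Pi]=e^{S_n}/\mathbb{P}(Z_n>0\mid\Pi)=1+e^{S_n}A_n(0)$, Jensen's inequality for the concave map $x\mapsto x^{\beta}$, $0<\beta<1$, gives $\mathbf{E}[Z_n^{\beta}\mid\Pi]\le\mathbb{P}(Z_n>0\mid\Pi)\bigl(1+e^{S_n}A_n(0)\bigr)^{\beta}=e^{S_n}\bigl(1+e^{S_n}A_n(0)\bigr)^{\beta-1}$, whence
\[
 \mathbb{E}\bigl[Z_n^{\beta}\bigm|Z_n>0\bigr]\;=\;\frac{\mathbb{E}[Z_n^{\beta}]}{\mathbb{P}(Z_n>0)}\;\le\;\frac{\mathbf{E}\bigl[\bigl(1+e^{S_n}A_n(0)\bigr)^{-(1-\beta)}\bigr]}{\mathbf{E}\bigl[\bigl(1+e^{S_n}A_n(0)\bigr)^{-1}\bigr]}\,.
\]
Both expectations on the right are treated by exactly the analysis used for Theorem~\ref{survival}: each is of order $\mathbf{P}(\tau_n=n)$, with limits $\mathbf{E}^{\circ}[(1+W(0))^{-(1-\beta)}]$ and $\mathbf{E}^{\circ}[(1+W(0))^{-1}]=\theta$; the ratio thus converges to a finite constant, so $\sup_n\mathbb{E}[Z_n^{\beta}\mid Z_n>0]<\infty$. (By contrast $\mathbb{E}[Z_n\mid Z_n>0]=\gamma^n/\mathbb{P}(Z_n>0)\sim b_n/\theta\to\infty$, so the limit law has infinite mean and fractional moments are the most one can control.)

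The step I expect to be the main obstacle is precisely the asymptotic analysis of $\mathbf{E}[(\tfrac{1}{1-s}+e^{S_n}A_n(s))^{-1}]$ and of $\mathbf{E}[(1+e^{S_n}A_n(0))^{-(1-\beta)}]$ — that is, showing that $\mathbf{P}(\tau_n=n)^{-1}$ times each of them converges. This requires fine fluctuation estimates for $S$ near its minimum (local-limit and renewal behaviour, the negligibility of paths whose minimum is not attained at time $n$, and uniform integrability), carried out jointly with the sums $\sum_j e^{\widehat{S}_j}\psi_j(s)$ whose summands may be heavy-tailed; it is at this point that Assumption~A3 is needed, exactly as in \cite{agkv} and in the proof of Theorem~\ref{survival}.
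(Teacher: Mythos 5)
Your overall strategy coincides with the paper's: tilt to $\mathbf P$, use the identity $1/(1-h_n(s))=e^{-S_n}/(1-s)+A_n(s)$, decompose according to the time $\tau_n$ of the running minimum, and pass to the dual walk. For the fractional moments, your conditional Jensen bound is the same inequality as the paper's H\"older argument (both give $\mathbf E[Z_n^{\beta}\mathbf 1_{Z_n>0}\mid\Pi]\le \mathbf E[Z_n\mid\Pi]^{\beta}\,\mathbb P(Z_n>0\mid\Pi)^{1-\beta}$), so there is no genuine difference of route there.

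There is, however, a real error in your treatment of the generating function: you claim that the contribution of $\{\tau_n<n\}$ to $\mathbf E\big[(\tfrac1{1-s}+e^{S_n}A_n(s))^{-1}\big]$ is $o(\mathbf P(\tau_n=n))$, so that you may work entirely on $\{\tau_n=n\}$ and reverse the walk there. This is false. For each fixed $j\ge1$ one has $\{\tau_n=n-j\}=\{\tau_{n-j}=n-j\}\cap\{L_{n-j,n}\ge0\}$, hence $\mathbf P(\tau_n=n-j)=\mathbf P(\tau_{n-j}=n-j)\,\mathbf P(L_j\ge0)\sim\mathbf P(\tau_n=n)\,\mathbf P(L_j\ge0)$, which is of exactly the same order as $\mathbf P(\tau_n=n)$; and on $\{\tau_n=n-j\}$ the integrand stays of order one, so each term contributes at the same order. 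Indeed Theorem~\ref{limitlaw} says that $n-\tau_n$ given $Z_n>0$ has a nondegenerate limit law $p$ on $\mathbb N_0$, which is precisely the statement that all the $j\ge1$ contributions matter. The correct argument, carried out in Lemma~\ref{le3}, decomposes over all $j$, establishes convergence for each fixed $j$ (via Lemma~\ref{le4} applied to the first $n-j$ steps), and then shows the tail $j\ge k$ is $\le\varepsilon\mathbf P(\tau_n=n)$; the limit is a mixture over $j$, not the $j=0$ term alone. Your proposed limit $G$ is therefore not the correct one.

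Two further gaps. You never show the limiting denominator $\Lambda(0)$ (i.e.\ the constant $\theta$ in Theorem~\ref{survival}) is strictly positive; this requires Agresti's inequality together with Lemma~\ref{le2} to get $U_\infty(0)>0$ $\mathbf P^-$-a.s., and without it $G$ is not well defined. And even on $\{\tau_n=n\}$, the law of the reversed walk conditioned on $\widehat M_n<0$ is not $\mathbf P^-$ restricted to $n$ coordinates, so the convergence of $\mathbf E[(\cdots)^{-1};\tau_n=n]/\mathbf P(\tau_n=n)$ does not follow from a.s.\ convergence of $\sum_j e^{\widehat S_j}\psi_j(s)$ plus dominated convergence; one needs a genuine truncation in the environment indices, and the monotonicity of $U_n(s)=(1-f_{n,0}(s))e^{-S_n}$ (used in Lemma~\ref{le4}) is what makes that truncation close. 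You yourself flag this as the main obstacle; I am pointing out that it is where the proof is actually done, not a routine uniform-integrability estimate.
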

\noindent
For $\beta=1$ this statement is no longer true, since $\mathbb E[Z_n]= \gamma^n$ from \eqref{expect} and consequently $\mathbb E[Z_n \mid Z_n>0] \to \infty$ for $n \to \infty$.

The next theorem captures the typical appearence of the random environment, when conditioned on survival.  Let $S^n$ be the stochastic process with paths in the Skorohod space $D[0,1]$ of c\`adl\`ag functions on $[0,1]$ given by
\[ S^n_t  =  S_{nt} \ , \quad 0 \le t \le 1 \ . \]
We agree on the convention $S_{nt}=S_{ \lfloor nt\rfloor }$, which we use correspondingly for $Z_{nt}, \tau_{nt}$. Also let $L^*$ denote a L\'evy-process on $[0,1]$ conditioned to attain its minimum at time $t=1$. The precise definition will be given in Section 2.
\begin{theorem}\label{limitlaw} 
Assume Assumptions A1 to A3. Then, as $n\rightarrow\infty$,  the distribution of  $n-\tau_n$ conditioned on the event $ Z_n>0$ converges to a probability distribution $p$ on $\mathbb N_0$  and
\begin{eqnarray}
  \big(\tfrac 1{a_n} S^n\ \big|\ Z_n>0\big) &\stackrel{d}{\rightarrow} &   L^* \ \nonumber
\end{eqnarray}
in the Skorohod space $D[0,1]$. Also both quantities are asymptotically independent, namely for every bounded continuous $\varphi:D[0,1] \to \mathbb R$ and every $B \subset \mathbb N_0$
\[ \mathbb E\big[ \varphi(\tfrac 1{a_n}S^n); n-\tau_n \in B   \mid Z_n > 0\big]\to \mathbf E[\varphi(L^*) ]p(B) \ . \]
\end{theorem}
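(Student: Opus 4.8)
The plan is to deduce all three assertions at once from the single limit, valid for every bounded continuous $\varphi:D[0,1]\to\mathbb R$ and every $B\subset\mathbb N_0$,
\[ \mathbb E\big[\varphi(\tfrac1{a_n}S^n);\, n-\tau_n\in B \mid Z_n>0\big]\ \longrightarrow\ \mathbf E[\varphi(L^*)]\,p(B)\qquad(n\to\infty), \]
where $p$ is a probability measure on $\mathbb N_0$ to be identified. Indeed, the choice $\varphi\equiv1$ then gives $n-\tau_n\mid Z_n>0\Rightarrow p$, the choice $B=\mathbb N_0$ gives $\tfrac1{a_n}S^n\mid Z_n>0\Rightarrow L^*$ in $D[0,1]$, and the product form on the right is exactly the asserted asymptotic independence.

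First I would pass to $\mathbf P$. Since $\varphi(\tfrac1{a_n}S^n)$ and $\mathbf 1\{n-\tau_n\in B\}$ depend on the environment only through $S_0,\ldots,S_n$, while $\mathbf E[\mathbf 1\{Z_n>0\}\mid\Pi]=\mathbb P(Z_n>0\mid\Pi)$, the change of measure together with Theorem~\ref{survival} turns the left-hand side into $\frac{1+o(1)}{\theta\mathbf P(\tau_n=n)}\,\mathbf E[\varphi(\tfrac1{a_n}S^n)\mathbf 1\{n-\tau_n\in B\}W_n]$, where $W_n:=e^{-S_n}\mathbb P(Z_n>0\mid\Pi)\le1$ by \eqref{gleichung1}. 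I would then decompose at $\tau_n$ using the Geiger--Kersting identity $1/\mathbb P(Z_n>0\mid\Pi)=\sum_{k=0}^n e^{-S_k}g_{k,n}$ with $g_{n,n}=1$ and $0\le g_{k,n}\le\psi(Q_{k+1})$ for a function $\psi$ bounded by a constant multiple of $1+\zeta(a)$, so that $\mathbf E[(\log^+\psi(Q))^{\alpha+\varepsilon}]<\infty$ by Assumption~A3. On $\{\tau_n=n-m\}$ the minimum of $S$ is attained at $k=n-m$, hence $W_n=e^{-(S_n-S_{n-m})}D_n^{-1}$ with $D_n:=\sum_{k=0}^n e^{-(S_k-S_{n-m})}g_{k,n}=D_n^{\mathrm{pre}}+D_n^{\mathrm{post}}$ (split at $k=n-m$, all exponents $\le0$). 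Reversing time at the minimum, $D_n^{\mathrm{pre}}\to D_\infty^{\mathrm{pre}}<\infty$ almost surely --- a convergent series in the walk and environment just before the minimum, its finiteness being exactly what A3 secures via a Borel--Cantelli estimate on $\log^+\psi$ --- while $D_n^{\mathrm{post}}$ converges to a functional of the $m$ post-minimum generations; thus $D_n$ converges a.s.\ to a finite positive limit. Finally, the cyclic decomposition of the trajectory at its minimum factorises $\mathbf 1\{\tau_n=n-m\}$ into an independent time-reversed pre-part of length $n-m$ (conditioned to stay positive) and a post-part of length $m$ (conditioned to stay nonnegative), which together with Lemma~\ref{maxim} yields $\mathbf P(\tau_n=n-m)/\mathbf P(\tau_n=n)\to q_m:=\mathbf P(S_1\ge0,\ldots,S_m\ge0)$.

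To assemble the limit I would write, for fixed $m$, $\mathbf E[\varphi(\tfrac1{a_n}S^n)\mathbf 1\{\tau_n=n-m\}W_n]=\mathbf P(\tau_n=n-m)\,\mathbf E[\varphi(\tfrac1{a_n}S^n)W_n\mid\tau_n=n-m]$ and use that under $\mathbf P(\cdot\mid\tau_n=n-m)$ the rescaled path converges in $D[0,1]$ to $L^*$ (the conditioned functional limit theorem of Section~2; moving the minimum by the fixed amount $m$ is immaterial), while $W_n$ converges in distribution to a $(0,1]$-valued functional of the local endpoint data (the $m$ post-minimum increments and $D_\infty^{\mathrm{pre}}$). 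The key point is that these converge jointly, with the macroscopic path asymptotically independent of the local data: one decouples the trajectory at time $1-\delta$, the increment on $[1-\delta,1]$ being, for large $n$, independent of the first $\lfloor(1-\delta)n\rfloor$ steps and, by tightness, continuity of $\varphi$, and continuity of $L^*$ at $t=1$, contributing $o(1)$ to $\varphi$ as $\delta\downarrow0$. With $W_n\le1$ for uniform integrability this gives $\mathbf E[\varphi(\tfrac1{a_n}S^n)W_n\mid\tau_n=n-m]\to\mathbf E[\varphi(L^*)]c_m$ for some $c_m\in(0,1]$, hence $\mathbf P(\tau_n=n)^{-1}\mathbf E[\varphi(\tfrac1{a_n}S^n)\mathbf 1\{\tau_n=n-m\}W_n]\to q_m c_m\mathbf E[\varphi(L^*)]$. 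Summing over $m\in B$ is legitimate because $\sup_n\mathbb P(n-\tau_n>M\mid Z_n>0)\to0$ as $M\to\infty$, which follows from $W_n\le e^{-(S_n-S_{\tau_n})}$, the pre/post factorisation, and the finiteness of $\sum_m\mathbf E[e^{-S_m};S_1\ge0,\ldots,S_m\ge0]$; the latter is finite by the Wiener--Hopf identity $\sum_m z^m\mathbf E[e^{-S_m};S_1\ge0,\ldots,S_m\ge0]=\exp(\sum_{k\ge1}\tfrac{z^k}{k}\mathbf E[e^{-S_k};S_k\ge0])$ at $z=1$, since $\mathbf E[e^{-S_k};S_k\ge0]=O(a_k^{-1})$ by the local limit theorem under A2. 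This yields the master limit with $p(m):=q_m c_m/\theta$; the case $\varphi\equiv1$, $B=\mathbb N_0$ forces $\sum_m q_m c_m=\theta<\infty$, so $p$ is a probability measure on $\mathbb N_0$, and the three assertions follow as explained.

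The step I expect to be the main obstacle is the control of $W_n$: establishing the almost sure convergence $D_n^{\mathrm{pre}}\to D_\infty^{\mathrm{pre}}<\infty$ and, above all, the uniform integrability that allows this limit and the weight $e^{-(S_n-S_{n-m})}$ to be carried through the conditional expectation and the $m$-summation to be interchanged with the limit in $n$ --- this is precisely where Assumption~A3 enters and is the technical heart of the argument. A secondary difficulty is the asymptotic independence of the macroscopic path from the local endpoint data under the conditioned law, which rests on the conditioned functional limit theorem of Section~2 and on controlling the behaviour of $L^*$ near $t=1$. The remaining ingredients --- the change of measure, the Geiger--Kersting decomposition, and the identification of $q_m$ via Lemma~\ref{maxim} --- are routine.
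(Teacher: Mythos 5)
Your high-level plan — reduce all three assertions to a single ``master limit'', pass to $\mathbf P$ by the change of measure, and decompose over the value of $\tau_n$ — is in fact the same strategy the paper follows: this is exactly what Lemma~\ref{le3} (built on Lemma~\ref{le4}) encapsulates, and the paper's proof of Theorem~\ref{limitlaw} is then a direct application of Lemma~\ref{le3} to $Y_n=\varphi(\tfrac1{a_n}\bar S^n)$ together with the elementary observation that $\tfrac1{a_n}(S^n-\bar S^n)\to0$. Where you genuinely diverge is inside the technical core. You propose to analyse $W_n=e^{-S_n}\mathbb P(Z_n>0\mid\Pi)$ through the Geiger--Kersting harmonic identity $1/\mathbb P(Z_n>0\mid\Pi)=\sum_{k}e^{-S_k}g_{k,n}$ and the split $D_n=D_n^{\mathrm{pre}}+D_n^{\mathrm{post}}$; the paper instead works (after duality) with $U_n(s)=(1-f_{n,0}(s))e^{-S_n}$, exploits that $U_n(s)$ is \emph{monotone decreasing in $n$} (so the $\mathbf P^-$-a.s.\ limit exists with no extra argument), and shows positivity of $\mathbf E^-[U_\infty(0)]$ via Agresti's inequality and Lemma~\ref{le2}. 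The monotonicity trick is what makes the paper's version clean; it also gives the more general Lemma~\ref{le3} with a test function $\psi(Z_n,n-\tau_n)$, which is needed for Theorem~\ref{theoZ_n} as well, whereas your route is tailored to the present theorem where the test function does not involve $Z_n$.

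Your sketch has real gaps beyond book-keeping, and you correctly flag them, but they should be named precisely: (i) the weights $g_{k,n}$ depend on $n$ through $f_{k+1,n}(0)$, so ``$D_n^{\mathrm{pre}}\to D_\infty^{\mathrm{pre}}$ a.s.'' is not a convergent-series statement — the uniform bound $g_{k,n}\le\psi(Q_{k+1})$ gives tightness, not a.s.\ convergence; the paper avoids this by the monotonicity of $U_n(s)$ in $n$. (ii) The asymptotic independence of $\varphi(\tfrac1{a_n}S^n)$ from $W_n$ under $\mathbf P(\cdot\mid\tau_n=n-m)$ is the content of the decomposition over $\tau_{n-m}$ together with Lemma~\ref{limitEminus} (and, for the fuller statement, Lemma~\ref{cor}); your ``decouple at $1-\delta$'' heuristic captures the intuition but a rigorous version must handle the correlation induced by the conditioning, and must deal with $W_n$ being a functional of the \emph{entire} path, not only of a neighbourhood of the endpoint. (iii) For the interchange of the $m$-sum with the $n$-limit, finiteness of $\sum_m\mathbf E[e^{-S_m};L_m\ge0]$ is necessary but not sufficient; you also need to split $\sum_{j<n-M}\mathbf P(\tau_j=j)\mathbf E[e^{-S_{n-j}};L_{n-j}\ge0]$ at $j\approx n/2$ and invoke regular variation of both factors, exactly as in the proof of Lemma~\ref{le3}. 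With these points supplied, your Geiger--Kersting route would give a valid alternative proof of (the $Z_n$-free case of) Lemma~\ref{le4}, and hence of the theorem.
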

The first statement also holds for strongly subcritical, but not for critical or weakly subcritical BPRE. The   limit $L^*$ only appears in the intermediate case. 

The last theorem characterizes the typical behavior of $Z$, conditioned on survival. For a partial result see Theorem 1 in \cite{af_01}. Recall that $\tau_{nt}$ is the moment, when $S_0, \ldots, S_{nt}$ takes its minimum.

\begin{theorem}\label{theomain} Let $0 < t_1 < \cdots < t_r < 1$. For $i=1, \ldots,r$ let
\[ \mu(i) = \min\big\{ j \le i: \inf_{t \le t_j}L_t^* = \inf_{t \le t_i}L_t^*\big\} \ .\]
Then under Assumptions A1 to A3 there are i.i.d. random variables $V_1, \ldots,V_r$ with values in $\mathbb N$  and independent of $L^*$ such that
\begin{align*}
  \big((Z_{ \tau_{nt_1}},\ldots,Z_{\tau_{nt_r}} ) \mid Z_n>0\big) 
 \stackrel{d}{\rightarrow}  (V_{ \mu(1)},\ldots,V_{ \mu(r)})  
\end{align*}
as $n \to \infty$. Also there are i.i.d. strictly positive random variables $W_1,  \ldots,W_r$ independent of $L^*$ such that
\begin{align*}
  \Big(\big( \frac{Z_{  n t_1 }}{e^{S_{  n t_1 }-S_{\tau_{nt_1}}} }  ,\ldots, \frac{Z_{  nt_r }}{e^{S_{  n t_r }-S_{\tau_{nt_r}}}}  \big) \ \big|\ Z_n>0\Big)   \stackrel{d}{\to}  (W_{ \mu(1)},\ldots,W_{ \mu(r)})   
\end{align*}
as $n \to \infty$.
\end{theorem}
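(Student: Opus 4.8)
\emph{Proof plan.} The plan is to carry out the whole analysis under the measure $\mathbf{P}$, exploiting the identity $\mathbb{E}[\varphi ; Z_n>0] = \gamma^n\,\mathbf{E}[\varphi\, e^{-(S_n-S_0)}; Z_n>0]$ together with Theorem~\ref{survival}: this rewrites the conditional law $\mathcal L(\,\cdot \mid Z_n>0)$ as a reweighting of $\mathbf{P}$ by a density proportional to $e^{-(S_n-S_0)}\,\mathbf 1\{Z_n>0\}$ divided by the survival probability. Under this reweighting the environment is, up to a bounded and asymptotically stabilizing correction coming from the branching randomness, the centered walk $S$ conditioned to attain its minimum at time $n$; after rescaling by $a_n$ this converges to $L^*$, which is the content of Theorem~\ref{limitlaw} that I may invoke. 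The next structural input is the decomposition of the path $(S_0,\dots,S_n)$ at its successive strict descending ladder epochs, i.e.\ the times at which $S$ reaches a new running minimum. For fixed $0<t_1<\dots<t_r<1$ each $\tau_{nt_i}$ is one of these ladder epochs, and I would first show that, with conditional probability tending to one, indices $i,j$ with $\mu(i)=\mu(j)$ satisfy $\tau_{nt_i}=\tau_{nt_j}$, whereas distinct values of $\mu$ correspond to ladder epochs separated by a macroscopic time gap across which $S$ rises by an amount of order $a_n\to\infty$. This is exactly how $\mu$ is read off the limit path and follows from the $L^*$-limit plus the a.s.\ uniqueness of the location of the running infimum of the spectrally positive (or Brownian) limit.

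The core of the argument is the local analysis near a single ladder epoch $\tau=\tau_{nt_i}$. Conditioned on $\Pi$, the branching process from generation $\tau$ on is a BPRE started from $Z_\tau$ ancestors whose associated walk $(S_{\tau+k}-S_\tau)_{k\ge 0}$ stays $\ge 0$ up to the next ladder epoch and, on $\{Z_n>0\}$, has a descendant reaching generation $n$. Using the first–moment bound \eqref{gleichung1} one gets tightness of $Z_\tau$ and, more precisely, convergence in distribution of the pair consisting of $Z_\tau$ and the number of generation-$\tau$ individuals with descendants at time $n$; the limiting law of $Z_\tau$ on this event is what we call the law of $V$ (this is the mechanism already behind Theorem~\ref{theoZ_n}, which corresponds to the corner $r=1$, $t_1\uparrow 1$). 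Between $\tau$ and $nt_i$ the environment is now favorable — the walk climbs to height of order $a_n$ — and the standard BPRE martingale $Z_{\tau+k}\,e^{-(S_{\tau+k}-S_\tau)}$, started from $Z_\tau$ individuals, converges; on the event that the population survives to $n$ its limit $W$ is strictly positive, so that $Z_{nt_i}/e^{S_{nt_i}-S_{\tau_{nt_i}}}\to W$. Here Assumption A3, through control of the standardized truncated second moment $\zeta$ along the climbing environment, is what secures uniform integrability and non-degeneracy of this martingale in the limit.

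Finally I would assemble the finitely many segments. By the Markov property of $(Z,\Pi)$ at the ladder epochs, conditionally on the macroscopic shape — equivalently, in the limit, on $L^*$ — the environment pieces attached to distinct relevant ladder epochs, together with the branching randomness they carry, decouple asymptotically, and the local limits above hold uniformly over the relevant family of environments. Hence the vectors $(Z_{\tau_{nt_i}})_{i}$ and $(Z_{nt_i}/e^{S_{nt_i}-S_{\tau_{nt_i}}})_{i}$ converge jointly to $(V_{\mu(i)})_i$ and $(W_{\mu(i)})_i$ with $V_1,\dots,V_r$ i.i.d., $W_1,\dots,W_r$ i.i.d.\ and strictly positive, and both families independent of $L^*$; the repeated indices $\mu(i)$ encode precisely that two times sharing the same running minimum see the same bottleneck population. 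The main obstacle is exactly this decoupling: one must show that conditioning on the global event $\{Z_n>0\}$ neither introduces long-range dependence between bottleneck segments nor biases the macroscopic environment away from $L^*$, which requires the above local limit theorems to be uniform in the environment and careful bookkeeping of the several slowly varying normalizations $a_n,b_n$ (Lemma~\ref{maxim}). This, together with the use of Assumption A3, is where the bulk of the technical work resides.
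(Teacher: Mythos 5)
Your high-level picture (change of measure to $\mathbf{P}$, macroscopic environment $\to L^*$, local analysis near each bottleneck, decoupling of distinct ladder epochs) matches the structure the theorem calls for, and you correctly diagnose the two central difficulties: (i) computing the limiting law of $Z_{\tau_{nt}}$ and of $Z_{nt}/e^{S_{nt}-S_{\tau_{nt}}}$ \emph{under the conditional law given $Z_n>0$}, with non-degeneracy of the second limit, and (ii) showing that this global conditioning does not install long-range dependence between the $r$ segments. However, for both of these you offer only an announcement, not a mechanism, and the announcement glosses over the actual obstruction.

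The key tool you are missing is the size-biased tree with a stem (the LPP-trest of Section 4) and Geiger's decomposition (Theorem~\ref{geigerconstruction}) together with the transfer result Theorem~\ref{trest}. The paper does \emph{not} analyze $\mathcal L(\cdot\mid Z_n>0)$ directly; it proves the joint convergence for $\tilde Z$, the population of the LPP-trest, conditioned on $\tau_{n-m}=n-m$, and then transfers to the original process via Theorem~\ref{trest}. The spine decomposition $\tilde Z_j=1+\sum_{i<j}\tilde Z_j^i$ into independent (given $\Pi$) bushes hanging off the stem is what makes the local analysis feasible: Lemma~\ref{le51} shows only the $O(1)$ generations around each $\tau_{nt_i}$ contribute to $\tilde Z_{\tau_{nt_i}}$ and to $\beta_{a,n}$; Lemma~\ref{le41} (via Doob's inequality, Lemma~\ref{le2}, Tanaka's decomposition, Birkhoff's theorem) is what makes the normalized limit strictly positive; and Lemma~\ref{cor} delivers the asymptotic i.i.d.\ structure and independence from the rescaled walk. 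Your sketch replaces all of this with ``tightness from the first--moment bound'' and the claim that the ``standard BPRE martingale'' $Z_{\tau+k}e^{-(S_{\tau+k}-S_\tau)}$ has a strictly positive limit on $\{Z_n>0\}$. Neither step is sound as stated: tightness of $Z_\tau$ does not give convergence of its conditional law (let alone to an explicit limit independent of $L^*$), and for the ordinary BPRE this martingale can degenerate at $0$; the conditioning on future survival changes the evolution so that the martingale argument cannot be invoked directly. The size-bias/spine structure is exactly the device that converts the conditional problem into an unconditional one where these tools do apply. Without it, or an equivalent, the argument cannot be completed.

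A secondary omission: you silently need that with conditional probability $\to 1$, $\mu(i)=\mu(j)$ is equivalent to $\tau_{nt_i}=\tau_{nt_j}$, and that the sites $\tau_{nt_i}$ coincide with the $\sigma_{j,n}$ of \eqref{sigma} in the right way; in the paper this is handled by introducing $\mu_n$ in \eqref{mun} and proving its convergence to $\mu$ via Lemma~\ref{funclimit}. That bookkeeping is elementary but must be part of the proof.
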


For $r=1$ and $t_1=t$ the theorem says the following: At time $\tau_{nt}$ the population  consists only of 
few individuals, whereas at time $nt$ it is large, namely of order $e^{S_{  n t }-S_{\tau_{ n t }}}$-many individuals, which for every $\varepsilon>0$ is bigger than $e^{\delta a_n}$ with probability $1- \varepsilon$, if $\delta>0$ is small enough. Thus      the minimum of the random walk at time $\tau_{nt}$ acts as a bottleneck for the population, whereas afterwards the increasing random walk generates an environment, which is favorable for growth.

Moreover: In case of $r=2$ either $\tau_{nt_1}<\tau_{nt_2}$ or $\tau_{nt_1}=\tau_{nt_2}$, which for the limiting process $L^*$ means $\mu(2)=2$ or $\mu(2)=1$. The theorem says that in the first situation of two bottlenecks the population sizes $Z_{ \tau_{nt_1}}$ and $Z_{\tau_{nt_2}}$ are asymptotically independent, as well as the sizes $Z_{  n t_1 }$ and $Z_{  n t_2 }$. In the second situation of one common bottleneck certainly $Z_{ \tau_{nt_1}}$ and $Z_{\tau_{nt_2}}$ are equal. Interestingly this is asymptotically true as well for $Z_{  n t_1 }/e^{S_{  n t_1 }-S_{\tau_{nt_1}}} $ and $Z_{  n t_2 }/e^{S_{  n t_2 }-S_{\tau_{nt_2}}} $. Here  a law of large numbers is at work, in a similar fashion as for supercritical Galton-Watson processes. 

As a corollary of Theorem \ref{limitlaw} and \ref{theomain} we observe that $\big(\tfrac 1{a_n} \log Z_{nt}\big)_{0 \le t \le 1}$ converges to a L\'evy-process, conditioned to take its minimum at the end and reflected at zero. For the finite dimensional distributions this follows from the theorems together with path properties of L\'evy-processes.

The proofs rest largely on the fact that the event $Z_n>0$ asymptotically entails that $\tau_n$ takes a value close to $n$, as stated in Theorem \ref{limitlaw}. Thus it is our strategy to replace the conditioning event $Z_n>0$ by events $\tau_n=n-m$, which are easier to handle. Here we can build on some random walk theory. For the proof of the last theorem we also make use of constructions of {\em trees with stem} going back to Lyons, Perez and Pemantle \cite{lpp} and Geiger \cite{ge_99} for Galton-Watson processes. They establish a connection between branching processes conditioned to survive and branching processes with immigration.

The paper is organized as follows: In Section 2 we compile and prove several results on random walks. In Section 3 the proofs of the first three theorems are given. Section 4 deals with trees with stem and Section 5 contains the proof of our last theorem.

\section{Results on random walks}

In this section we assemble several  auxiliary results on the random walk $S$. We allow for an arbitrary initial value $S_0=x $. Then we write  $\mathbf P_x(\cdot)$ and $\mathbf E_x[\cdot]$ for the corresponding probabilities and expectations. Thus $\mathbf P= \mathbf P_0$.

\paragraph{2.1 Some asymptotic results} 
Let us introduce for $n \ge 1$
\[   L_n \ = \ \min(S_1, \ldots, S_n) \ , \quad M_n \ = \ \max(S_1, \ldots,S_n)\]
and as above for $n \ge 0$
\[ \tau_n \ = \  \min \{ k \le n : S_k =\min(0, L_n) \} \ . \]
There is a   connection between $M_n$ and $\tau_n$, set up by the dual random walk 
\[ \hat S_k \ =\ S_n-S_{n-k} \ , \quad 0 \le k \le n\ . \]
Namely $\{\tau_n=n\}=\{ \hat M_n< 0 \} $ with $\hat M_n=\max(\hat S_1, \ldots,\hat S_n)$ and consequently
\[ \mathbf P(\tau_n=n)=\mathbf P(M_n< 0) \ . \] 
In particular $\mathbf P(\tau_n=n)$ is decereasing.

Next define the renewal functions $u: \mathbb{R} \to \mathbb{R}$ and $v: \mathbb{R} \to \mathbb{R}$ by
\begin{align*} u(x)  \ &= \ 1 + \sum_{k=1}^\infty \mathbf{P} (-S_k\le x, M_k < 0 ) \ , \quad x \ge 0 \ ,\\
v(x) \ &= \ 1 + \sum_{k=1}^\infty \mathbf{P} (-S_k > x, L_k > 0) \ , \quad x < 0 \  ,\\
v(0) \ &= \  \mathbf E[ v(X); X<0] 
\end{align*}
and 0 elsewhere. In particular $u(0) =1$. It is well-known that $0<v(0) \le 1$, for details we refer to \cite{ca}, Appendix B and \cite{dy05}. (Our function $v(x)$ coincides with the function $v(x)$ in \cite{abkv} up to a constant.) Also $u(x) $ and $v(-x)$ are of order $O(x)$ for $x \to \infty$.

\begin{lemma} \label{pro1}
Under Assumption A2 there is for every $r>0$ a $\kappa >0$ such that
\[ \mathbf E [e^{-rS_n}; L_n \ge 0 ]  \ \sim \ \kappa n^{-1}a_n^{-1}  \]
as $n \to  \infty$.
\end{lemma}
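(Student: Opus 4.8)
The plan is to estimate $\mathbf E[e^{-rS_n}; L_n \ge 0]$ by combining a factorization at the time of the minimum with known local limit asymptotics for random walks conditioned to stay nonnegative. First I would recall the standard fluctuation-theoretic identity: on the event $\{L_n \ge 0\}$ the trajectory $S_0,\ldots,S_n$ stays in the half-line $[0,\infty)$, and one can split the walk at the last time $\sigma_n$ it is at its running minimum, or alternatively use the direct approach via the renewal function $v$. The key input is the local form of the classical result (see e.g. the references \cite{ca}, \cite{dy05} already cited): under Assumption A2 one has, for fixed $x \ge 0$,
\[ \mathbf P_x(S_n \in dy, L_n \ge 0) \ \sim \ \frac{v(-x)\,u(y)}{n\,a_n}\, g\!\left(\tfrac{y}{a_n}\right) dy \]
uniformly in a suitable sense, where $g$ is (proportional to) the density of the meander endpoint at time $1$. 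Integrating $e^{-ry}$ against this and using that $u(y) = O(y)$ grows at most linearly while $e^{-ry}$ decays exponentially, one gets that the $y$-integral of $e^{-ry} u(y) g(y/a_n)$ converges, as $n \to \infty$ and hence $a_n \to \infty$, to $g(0)\int_0^\infty e^{-ry} u(y)\,dy$, a finite positive constant; here $g(0) > 0$ in the non-lattice case. This yields the claimed $\kappa n^{-1} a_n^{-1}$ asymptotics with $\kappa = v(0)\, g(0) \int_0^\infty e^{-ry} u(y)\, dy$ (the prefactor coming from $S_0 = 0$ and the $v(0)$ normalization in the definition above).

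An alternative and perhaps cleaner route, which I would prefer if the local limit theorem above is not directly available in the cited form, is to go through the well-known asymptotics $\mathbf P(L_n \ge 0) \sim c\, b_n^{-1}$ with $b_n = n^{1-1/\alpha}\ell_n'$ (this is essentially Lemma \ref{maxim}, applied to the dual walk, noting $\mathbf P(\tau_n = n) = \mathbf P(M_n < 0)$ and the $u \leftrightarrow v$ symmetry), and then to analyze the conditioned random variable $S_n/a_n$ given $L_n \ge 0$, which converges in distribution to the Brownian/stable meander endpoint $\mathcal M_1$. Since $b_n^{-1}$ and $n^{-1}a_n^{-1}$ differ exactly by a factor $a_n/ (n \cdot b_n^{-1}\cdot \text{const})$ — indeed $b_n \sim n a_n /n = a_n \cdot n^{1-1/\alpha}/n^{1-1/\alpha}$, let me instead just note $n a_n / b_n = n \cdot n^{1/\alpha}\ell_n / (n^{1-1/\alpha}\ell_n')$, so one must be careful — the factor $e^{-rS_n} \approx e^{-r a_n \mathcal M_1}$ contributes the decisive localization near $S_n = O(1)$ rather than $S_n = O(a_n)$. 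Concretely, conditioning on $L_n \ge 0$ makes $S_n$ of order $a_n$, so $\mathbf E[e^{-rS_n} \mid L_n \ge 0]$ is genuinely $o(1)$ and must be computed via the behavior of the conditioned density near $0$, i.e. via $\mathbf P(S_n \le z \mid L_n \ge 0)$ for fixed $z$, which by the meander limit and the local CLT is of order $z \cdot (\text{const})/a_n$ (linear in $z$ since the meander density vanishes linearly at $0$). Multiplying the two orders $b_n^{-1}$ and $a_n^{-1}$ and reconciling with $\mathbf P(\tau_n=n)\sim b_n^{-1}$ gives the order $n^{-1}a_n^{-1}$; checking the constant bookkeeping is then routine.

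The main obstacle, as the above discussion already signals, is establishing the \emph{local} behavior with the correct uniformity: one needs not just the meander limit theorem (convergence of $S_n/a_n$ given $L_n \ge 0$) but control of the density of $S_n$ near the origin on the event $\{L_n \ge 0\}$, uniformly enough to justify dominated convergence against $e^{-ry}$ (which is harmless) and against the unbounded factor $u(y)$ (which is the delicate point — one needs the bound $u(y) \le C(1+y)$ together with the local limit estimate $\mathbf P(S_n \in dy, L_n \ge 0) \le C\, u(y)/(n a_n)\, dy$ valid uniformly in $y$, and a matching lower bound for $y$ in compact sets to get the exact asymptotic constant). I would assemble this from the sharp local limit theorems for walks conditioned to stay positive (Vatutin--Wachtel-type results, or the versions in the references already cited in this section), and then the computation of $\kappa$ reduces to the elementary integral $\int_0^\infty e^{-ry}u(y)\,dy < \infty$.
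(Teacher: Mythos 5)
The paper itself does not prove this lemma: it is quoted from Proposition 2.1 of \cite{abkv}. Your high-level strategy---pass through a local limit theorem for the walk killed upon leaving $[0,\infty)$ and then integrate $e^{-ry}$ against the limiting density of $\mathbf P(S_n\in dy,\,L_n\ge 0)$---is indeed the natural route and is in the spirit of the cited reference. So the approach is essentially right; the problem is a concrete error in the constant.

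You write the local asymptotic as $\mathbf P_x(S_n\in dy,\,L_n\ge 0)\sim \frac{v(-x)u(y)}{na_n}\,g(y/a_n)\,dy$ with $g$ ``(proportional to) the density of the meander endpoint,'' and then send $y/a_n\to 0$ and claim $g(0)>0$. That claim is false: the positive meander density vanishes at the origin. In the Brownian case it is $x\,e^{-x^2/2}$; in general it vanishes like $x^{\alpha(1-\rho)}$, which in the spectrally positive setting (where $\rho=1-1/\alpha$) is exactly $x^1$, i.e.\ linear. Thus your formula for $\kappa=v(0)\,g(0)\int_0^\infty e^{-ry}u(y)\,dy$ gives $\kappa=0$, not a positive constant, and the derivation as stated collapses. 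The resolution is that the linear vanishing of the meander density is already encoded in the renewal function $u(y)$ (which grows linearly, since the descending ladder heights have finite mean in the spectrally positive case), so the correct fixed-$y$ local limit theorem reads $\mathbf P(S_n\in dy,\,L_n\ge 0)\sim \frac{C\,u(y)}{na_n}\,dy$ for a positive constant $C$ unrelated to $g(0)$ (roughly, $C$ involves $g'(0^+)$, or equivalently comes from the Vatutin--Wachtel normalization). With that corrected, your integration and domination argument (using $u(y)=O(y)$ against $e^{-ry}$) does yield $\kappa=C\int_0^\infty e^{-ry}u(y)\,dy>0$, so the gap is precisely in the form of the local limit theorem and the identification of the constant, not in the overall scheme. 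Your ``alternative route'' paragraph does not reach a conclusion (you stop at ``one must be careful'') and would need the same local input, so it does not repair the issue.
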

\noindent
For the proof   we refer to Proposition 2.1 in \cite{abkv}.
\begin{lemma} \label{maxim}
Under Assumption A2 there are real numbers
\[b_n= n^{1-\alpha^{-1}}\ell_n' \ , \quad  n \ge 1\] with a sequence $(\ell_n')$ slowly varying at infinity such that for every $x \ge 0$
\[ \mathbf P(M_n < x) \ \sim \ v(-x)b_n^{-1}  \]
as $n \to \infty$. Also there is a constant $c>0$ such that for all $x \ge 0$
\[ \mathbf P(M_n < x)=\mathbf P_{-x}(M_n < 0) \le cv(-x)b_n^{-1} \ . \]
\end{lemma}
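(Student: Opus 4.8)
The plan is to derive both assertions from the ascending‑ladder structure of the mean‑zero walk $S$ under $\mathbf P$: the asymptotic by a Tauberian argument, and the uniform bound by combining a generating‑function factorisation with the monotonicity of $n\mapsto\mathbf P(M_n<x)$. First I would record the shift identity $\mathbf P(M_n<x)=\mathbf P_{-x}(M_n<0)$, which holds because under $\mathbf P_{-x}$ the walk $(S_k+x)_{k\ge0}$ has the law of $(S_k)_{k\ge0}$ under $\mathbf P_0$; thus everything reduces to the probability that $S$, started at $-x\le0$, stays strictly below the barrier $0$ up to time $n$. Let $0=\sigma_0<\sigma_1<\cdots$ be the weak ascending ladder epochs of $S$, so that $\{\sigma_1>m\}=\{M_m<0\}$, and let $H_j=S_{\sigma_j}$ be the ladder heights. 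For $x>0$ the running maximum at time $n$ equals the last ladder height attained up to time $n$, whence, as a disjoint union,
\[ \{M_n<x\}\ =\ \bigcup_{j\ge0}\big\{\sigma_j\le n<\sigma_{j+1},\ H_j<x\big\} \]
(the boundary case $x=0$ is the $j=0$ event without the height constraint, and is absorbed by the separately defined $v(0)=\mathbf E[v(X);X<0]$). Since $\sigma_{j+1}-\sigma_j$ is independent of $\mathcal F_{\sigma_j}$ with the law of $\sigma_1$, summing over $j$ and over $\sigma_j=l\le n$ gives, with $G(s):=\sum_{m\ge0}\mathbf P(M_m<0)s^m$,
\[ \sum_{n\ge0}\mathbf P(M_n<x)\,s^n\ =\ R_x(s)\,G(s)\ ,\qquad R_x(s):=\sum_{j\ge0}\mathbf E\big[s^{\sigma_j};\,H_j<x\big]\ ,\quad 0\le s<1\ , \]
where $R_x(1)=\sum_{j\ge0}\mathbf P(H_j<x)$ is the weak‑ascending‑ladder renewal function, which up to the normalisation fixed in Section~2.1 is $v(-x)$.

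For the first assertion I would first pin the exponent. By the Sparre--Andersen identity, together with the fact that $\mathbf P(S_k>0)\to1-1/\alpha$ under Assumption~A2 (for $\alpha<2$ this uses that $X$ is spectrally positive, so the limiting stable law has positivity parameter $1-1/\alpha$; for $\alpha=2$ it is $1/2$), the non‑increasing sequence $\mathbf P(M_n<0)=\mathbf P(\sigma_1>n)$ is regularly varying of index $-(1-1/\alpha)$, so its reciprocal has the form $b_n=n^{1-1/\alpha}\ell_n'$ with $\ell_n'$ slowly varying. I would then apply a Tauberian theorem to $G(s)$ as $s\uparrow1$, use $R_x(s)\uparrow R_x(1)$ by monotone convergence, and invoke the monotone‑density theorem (legitimate since $\mathbf P(M_n<x)$ is non‑increasing in $n$) to conclude $\mathbf P(M_n<x)\sim v(-x)\,\mathbf P(M_n<0)\sim v(-x)b_n^{-1}$. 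For fixed $x$ this asymptotic is in any case available from the random‑walk results in \cite{ca,dy05,abkv}.

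For the uniform bound I would exploit that $R_x(s)\le R_x(1)$ for $s\in[0,1)$, the coefficients being non‑negative, so that $\sum_{n\ge0}\mathbf P(M_n<x)s^n\le R_x(1)\,G(s)$; since $\mathbf P(M_n<x)$ decreases in $n$ this gives $\mathbf P(M_n<x)\sum_{k=0}^n s^k\le R_x(1)\,G(s)$, i.e.
\[ \mathbf P(M_n<x)\ \le\ R_x(1)\,\frac{(1-s)G(s)}{1-s^{\,n+1}}\ . \]
Taking $s=1-1/n$ keeps $1-s^{\,n+1}\ge1-e^{-1}$, while an elementary Abelian estimate for the regularly varying non‑increasing sequence $\mathbf P(M_m<0)$ (Karamata for the partial sums $\sum_{m\le n}\mathbf P(M_m<0)$, plus the geometric tail) gives $(1-s)G(s)\asymp\mathbf P(M_n<0)\asymp b_n^{-1}$. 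This yields $\mathbf P(M_n<x)\le c\,v(-x)b_n^{-1}$ with an absolute constant $c$, uniformly in $x\ge0$; the cases $x=0$ or $n$ bounded are trivial since $v(-x)\ge v(0)>0$.

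The conceptual ingredients here are all classical fluctuation theory, so I expect the real work to be bookkeeping. The chief point is to match the limiting prefactor in the asymptotic to \emph{exactly} the function $v$ of Section~2.1 rather than to a constant multiple of it — this pins down the normalisation and, in particular, the treatment of the boundary value $v(0)$, and is the subtlest step. A secondary point is that the Tauberian and Abelian estimates must be carried out in the infinite‑variance regime $\alpha<2$, where Assumption~A2 is used precisely to fix the persistence exponent $1-1/\alpha$ through spectral positivity.
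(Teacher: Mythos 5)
Your plan is the classical ladder-epoch derivation that indeed underlies the result the paper invokes, but the paper itself takes a much shorter route: it cites Theorem~8.9.12 of Bingham--Goldie--Teugels for $\mathbf P(M_n\le x)$ (with positivity parameter $\rho=1-\alpha^{-1}$ by spectral positivity, and noting the proof there actually holds at every $x\ge 0$), observes that for $x>0$ that argument carries over verbatim to the strict event $\{M_n<x\}$, handles $x=0$ by the one-step decomposition $\mathbf P(M_n<0)=\mathbf E\big[\mathbf P_{X_1}(M_{n-1}<0);X_1<0\big]$ together with dominated convergence and the harmonicity relation $v(0)=\mathbf E[v(X);X<0]$ from \eqref{harm}, and quotes Lemma~2.1 of \cite{agkv} for the uniform bound. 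So your sketch is not a different route so much as an attempt to reconstruct the cited proof from scratch.

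The real difficulty is the one you yourself flag as ``the subtlest step'', and as written your sketch gets the prefactor wrong. You use \emph{weak} ascending ladder epochs $\sigma_j$ (the right choice to make $\{\sigma_1>m\}=\{M_m<0\}$, hence to match the factor $G(s)=\sum_m\mathbf P(M_m<0)s^m$). But by time reversal the weak-ladder renewal function is $R_x(1)=1+\sum_{k\ge1}\mathbf P(S_k<x,\,L_k\ge 0)$, whereas the paper's $v(-x)=1+\sum_{k\ge1}\mathbf P(S_k<x,\,L_k>0)$ is the \emph{strict}-ladder renewal function; these need not coincide, and the lemma asserts the asymptotic with exactly this $v$. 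If you switch to strict ladders to get the correct renewal function, the inter-ladder increment generates $\mathbf P(M_m\le 0)$ rather than $\mathbf P(M_m<0)$ in $G$, and that mismatch also has to be reconciled. Your Tauberian step yields $\mathbf P(M_n<x)\sim R_x(1)\,\mathbf P(M_n<0)$, so in effect you need the identity $R_x(1)\,v(0)=v(-x)$, which is left unproved. Likewise, at $x=0$ your decomposition degenerates ($R_0(1)=\mathbf P(H_0<0)=0$), and ``absorbed by the separately defined $v(0)$'' is not an argument — the paper's one-step conditioning plus \eqref{harm} is the actual mechanism there, and it also uses the already-established uniform bound to justify dominated convergence. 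Your generating-function argument for the uniform bound itself (monotonicity of $\mathbf P(M_n<x)$ in $n$, $R_x(s)\le R_x(1)$, and Karamata's Abelian estimate for $(1-s)G(s)$ at $s=1-1/n$) is fine up to the same normalisation constant, which is all the second assertion requires.
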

\begin{proof}
The corresponding statements for $\mathbf P(M_n \le x)$ are well-known. Indeed the first one is contained in Theorem 8.9.12 in \cite{bi}, where $\rho$ 
now is equal to $1- \alpha^{-1}$, since we are in the spectrally positive case  (note that the proof therein works for all $x \ge 0$ and not only, as stated, for the continuity points of $v$). 

For $x>0$ this proof completely translates to $\mathbf P(M_n < x)$. Therefrom the case $x=0$ can be treated as follows:
\begin{align*}
\mathbf P(M_n < 0) &= \mathbf E\big[ \mathbf P_{X_1}(M_{n-1} < 0); X_1 < 0\big] \\ &= b_{n-1} \mathbf E\Big[ \frac{\mathbf P_{X_1}(M_{n-1} < 0)}{b_{n-1}}; X_1 < 0\Big] \ . 
\end{align*}
From dominated convergence and from $b_n \sim b_{n-1}$ we get
\[ \mathbf P(M_n < 0)   \sim  b_n \mathbf E[v(X_1);  X_1 < 0] \ . \]
Now from equation \eqref{harm} below the right-hand side is equal to $v(0)$, as defined above, which gives the claim.

The second statement is obtained just as in Lemma 2.1 in \cite{agkv}. 
\end{proof}

\paragraph{2.2 The probability measures $\mathbf{P}^+$ and $\mathbf{P}^-$} The fundamental properties of $u,v$ are the identities
\begin{equation} \begin{array}{rl} \mathbf{E}  [u(x+X); X + x \ge 0] \ = \ u(x) \ ,  &x \ge 0 \ ,  \\   \mathbf{E} [ v(x+X);X+x<0] \ = \ v(x) \ ,   &x \le 0 \ , \end{array}  \label{harm}
\end{equation}
which hold for every oscillating random walk (see e.g. \cite{dy05}). 
It follows that $u$ and $v$ give rise to further probability measures $\mathbf{P}^+$ and $\mathbf{P}^-$. The construction procedure is standard and explained for $\mathbf{P}^+$ in detail in \cite{bedo,agkv}. We shortly summarize the procedure.

Consider the filtration $\mathcal F=(\mathcal F_n)_{n \ge 0}$, where $\mathcal F_n=\sigma(Q_1,\ldots,Q_n,Z_0,\ldots,Z_n)$. Thus $S$ is adapted to $\mathcal F$ and $X_{n+1}$ (as well as $Q_{n+1}$) is independent ot $\mathcal F_n$ for all $n \ge 0$. Then for every bounded, $\mathcal F_n$-measurable random variable $R_n$
\begin{align*} \mathbf E^+_x[ R_n ] \ &= \ \frac{1}{u(x)}\mathbf{E}_x[  R_n u(S_n); L_n \ge 0]\ , \quad x \ge 0 \ ,\\ \mathbf E^-_x[ R_n ] \ &= \ \frac{1}{v(x)}\mathbf{E}_x[ R_n v(S_n); M_n < 0] \ , \quad x \le 0 \ . 
\end{align*}
These are Doob's transforms from the theory of Markov chains. Shortly speaking $\mathbf P_x^+$ and $\mathbf P_x^-$ correspond to conditioning the random walk $S$ not to enter $(-\infty,0)$ and $[0,\infty)$ respectively. 

The following lemma is taken from \cite{bedo,agkv}.
\begin{lemma} \label{limitEminus}
Assume A2 and let $U_1,U_2, \ldots$ be a sequence of uniformly bounded random variables, adapted to the filtration $\mathcal F$.
If $U_n \to U_\infty$ $\mathbf P^+$-a.s. for some limiting random variable $U_\infty$, then as $n \to \infty$
\[ \mathbf E[ U_n \mid L_n \ge 0] \ \to \ \mathbf E^+[U_\infty] \ . \]
Similarly, if  $U_n \to U_\infty$ $\mathbf P^-$-a.s., then as $n \to \infty$
\[ \mathbf E[ U_n \mid M_n < 0] \ \to \ \mathbf E^-[U_\infty] \ . \]
\end{lemma}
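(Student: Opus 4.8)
The plan is to recognize this as a standard "conditioning on staying positive" convergence statement and prove it by a change-of-measure argument combined with dominated convergence. I will only spell out the first assertion (about $L_n \ge 0$ and $\mathbf P^+$); the second is entirely parallel, using $M_n<0$ and $\mathbf P^-$ in place of $L_n \ge 0$ and $\mathbf P^+$, and the identities for $v$ from \eqref{harm} in place of those for $u$.

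First I would fix $m \ge 1$ and, for $n > m$, decompose according to $\mathcal F_m$. Since $U_n$ is uniformly bounded, say $|U_n| \le C$, and since $u(0)=1$, the Markov property at time $m$ under $\mathbf P$ gives
\begin{align*}
 \mathbf E[U_n; L_n \ge 0] \ = \ \mathbf E\big[ \, \mathbf 1_{\{L_m \ge 0\}} \, \mathbf E_{S_m}[ U_n; L_{n-m}\ge 0 ] \, \big] \ ,
\end{align*}
where in the inner expectation $U_n$ is read as a bounded measurable function of the post-$m$ increments and of $S_m$. The key point is that $\mathbf P(L_n \ge 0) \sim \kappa' n^{-1}a_n^{-1}$ for a suitable constant (Lemma \ref{pro1} with $r \to 0$, or the classical renewal asymptotics behind it), and more precisely the harmonic property \eqref{harm} of $u$ yields, for each fixed starting point $x \ge 0$,
\begin{align*}
 \frac{\mathbf E_x[U_n; L_{n-m} \ge 0]}{\mathbf P(L_n \ge 0)} \ \longrightarrow \ u(x)\,\mathbf E^+_x[U_\infty]
\end{align*}
as $n \to \infty$, by the very definition of $\mathbf P^+_x$ together with the $\mathbf P^+$-a.s.\ convergence $U_n \to U_\infty$ and bounded convergence under $\mathbf P^+_x$. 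The ratio on the left is bounded by $C\,u(x)/\text{const}$ uniformly in $n$ (up to the standard renewal bound $\mathbf E_x[L_{n-m}\ge 0] \le \text{const}\cdot u(x)\,\mathbf P(L_n \ge 0)$), so applying dominated convergence in the outer $\mathbf E[\mathbf 1_{\{L_m \ge 0\}}\cdots]$ — the dominating function being $C\,u(S_m)\mathbf 1_{\{L_m \ge 0\}}$, which is $\mathbf P$-integrable again by \eqref{harm} — gives
\begin{align*}
 \frac{\mathbf E[U_n; L_n \ge 0]}{\mathbf P(L_n \ge 0)} \ \longrightarrow \ \mathbf E\big[ \mathbf 1_{\{L_m \ge 0\}} u(S_m) \mathbf E^+_{S_m}[U_\infty]\big] \ = \ \mathbf E^+[ U_\infty ] \ ,
\end{align*}
where the last equality is the tower property under $\mathbf P^+$ (the right side does not depend on $m$, as it must). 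Dividing by $\mathbf P(L_n \ge 0)$ on both sides of the identity $\mathbf E[U_n \mid L_n \ge 0] = \mathbf E[U_n; L_n \ge 0]/\mathbf P(L_n \ge 0)$ finishes the first claim.

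The main obstacle is the uniform domination needed to pass the limit inside the outer expectation: one must control $\mathbf E_x[U_n; L_{n-m} \ge 0]/\mathbf P(L_n \ge 0)$ by a constant multiple of $u(x)$ uniformly in $n$ and $x$. This is exactly the content of the uniform renewal-type estimate $\sup_n \mathbf P_x(L_n \ge 0)/\mathbf P(L_n \ge 0) \le \text{const}\cdot u(x)$, which in the finite-variance case is classical and in the stable domain of attraction case is the analogue (for $L_n$) of the bound $\mathbf P_{-x}(M_n<0) \le cv(-x)b_n^{-1}$ proved in Lemma \ref{maxim}; it is available from the references \cite{bedo,agkv} cited for this lemma, and indeed the whole statement is quoted from there, so in the paper itself one would simply invoke those. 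If instead one wanted a self-contained argument, the slightly delicate point to check is that $U_n \to U_\infty$ holds $\mathbf P^+_x$-a.s.\ for every $x \ge 0$ and not just for $x=0$, which follows because the laws $\mathbf P^+_x$ for different $x$ are mutually absolutely continuous on each $\mathcal F_n$ and, on the tail, the convergence of the bounded adapted sequence $(U_n)$ is a tail-type event compatible with this absolute continuity.
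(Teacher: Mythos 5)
The paper itself gives no proof here; it simply cites Lemma 2.5 of \cite{agkv} for the first assertion and notes the second is proved in the same way using Lemma \ref{maxim}. Your proposed argument has a genuine gap at its central step: the claimed convergence
\[
\frac{\mathbf E_x[U_n; L_{n-m}\ge 0]}{\mathbf P(L_n\ge 0)}\ \longrightarrow\ u(x)\,\mathbf E_x^+[U_\infty]
\]
is exactly the statement of the lemma, restated for the starting point $x$ (take $m=0$ and $x=0$ to recover the original), so invoking it to prove the lemma is circular. The justification you give — ``the very definition of $\mathbf P^+_x$ together with bounded convergence under $\mathbf P^+_x$'' — only yields $\mathbf E_x^+[U_n]\to\mathbf E_x^+[U_\infty]$. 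Unwinding the $h$-transform identity instead gives $\mathbf E_x[U_n;L_{n-m}\ge 0]=u(x)\,\mathbf E_x^+[U_n/u(S_{n-m})]$, while $\mathbf P(L_n\ge 0)=\mathbf E^+[1/u(S_n)]$; both tend to zero because $u(S_k)\to\infty$ $\mathbf P^+$-a.s., so the desired ratio limit is a ``$0/0$'' statement that bounded convergence cannot settle. The substantive content of the lemma, that conditioning on $\{L_n\ge 0\}$ is asymptotically equivalent to the $h$-transform, must be established by a two-stage argument: first for a fixed $\mathcal F_m$-measurable $U_m$, where the Markov decomposition plus $\mathbf P_x(L_{n-m}\ge 0)/\mathbf P(L_n\ge 0)\to u(x)$ with the uniform domination analogous to Lemma \ref{maxim} legitimately applies, and then for general adapted $(U_n)$ by an approximation exploiting the $\mathbf P^+$-a.s.\ convergence and the uniform boundedness.

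Two further inaccuracies compound the problem. Your opening decomposition $\mathbf E[U_n;L_n\ge 0]=\mathbf E\big[\mathbf 1_{\{L_m\ge 0\}}\mathbf E_{S_m}[U_n;L_{n-m}\ge 0]\big]$ presupposes that $U_n$ factors through $S_m$ and the post-$m$ increments, but the lemma allows any bounded $\mathcal F_n$-measurable $U_n$, which can depend on $Q_1,\ldots,Q_m,Z_0,\ldots,Z_m$ in ways not captured by $S_m$; the same conflation of $\mathcal F_m$ with $\sigma(S_m)$ lies behind the concluding ``tower property'' $\mathbf E\big[\mathbf 1_{\{L_m\ge 0\}}u(S_m)\mathbf E_{S_m}^+[U_\infty]\big]=\mathbf E^+[U_\infty]$, which requires $\mathbf E^+[U_\infty\mid\mathcal F_m]=\mathbf E_{S_m}^+[U_\infty]$ and fails for general $U_\infty$. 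Finally, the rate $\mathbf P(L_n\ge 0)\sim\kappa'n^{-1}a_n^{-1}$ attributed to Lemma \ref{pro1} is wrong: that lemma treats $\mathbf E[e^{-rS_n};L_n\ge 0]$ for $r>0$, where the extra $n^{-1}$ comes from the damping factor $e^{-rS_n}$. In fact $\mathbf P(L_n\ge 0)$ is regularly varying of index $-\alpha^{-1}$, roughly of order $1/a_n$.
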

\noindent
The first part coincides with Lemma 2.5 from \cite{agkv}. The proof of the second part follows exactly the same lines using Lemma \ref{maxim}.

\paragraph{2.3 Two functional limit results} 

Because of Assumption A2 there exists a L\'evy-process $L=(L_t)_{t \ge 0}$ such that the processes $S^n=(\tfrac 1{a_n} S_{  nt  })_{0\le t \le 1}$ converge in distribution to $L$ in the Skorohod space $D[0,1]$. Let $L^-=(L^-_t)_{0 \le t \le 1}$ 
denote the corresponding non-positive L\'evy-meander. This is the process $(L_t)_{0 \le t \le 1}$, conditioned on the event $\sup_{ t \le 1} L_t  \le 0$ (see  \cite{chau} and \cite{be}).

\begin{lemma} \label{funclimit2}
Under Assumptions A1 and A2 for every $x\geq 0$
\begin{equation*}
\big( \tfrac{1}{a_{n}}S^{n}\mid M_{n}<-x\big) \overset{d}{\rightarrow }L^{-}
\end{equation*}
in the Skorohod space $D[0,1]$.
\end{lemma}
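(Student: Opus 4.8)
The statement is the analogue, for the walk conditioned to stay strictly below zero, of the invariance principle for the walk conditioned to stay non-negative proved in \cite{agkv} (there $(\tfrac1{a_n}S^n\mid L_n\ge0)$ converges to the non-negative Lévy meander). My plan is to obtain it by transcribing the argument of \cite{agkv}, with the estimates on the event $\{L_n\ge0\}$ used there replaced by the estimates on $\{M_n<0\}$ supplied by Lemma \ref{maxim}; alternatively one may apply the result of \cite{agkv} directly to $-S$, which under Assumption A2 lies in the domain of attraction of a stable law of the same index. (One cannot simply appeal to Lemma \ref{limitEminus}, since the rescaled functional $\varphi(\tfrac1{a_n}S^n)$ does not converge $\mathbf P^-$-a.s.) In either case only two points are not purely formal. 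The first is the passage from level $0$ to level $-x$: writing $\{M_n<-x\}$ as $\{M_n<0\}$ for the walk with $S_0=-x$, conditioning on $\mathcal F_1$ and using the bound $\mathbf P_y(M_m<0)\le c\,v(-y)b_m^{-1}$ of Lemma \ref{maxim} together with its asymptotic form shows that, since $x/a_n\to0$, the limit does not depend on $x$. Thus it suffices to treat $x=0$.

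For $(\tfrac1{a_n}S^n\mid M_n<0)\stackrel{d}{\to}L^-$ I would split the time axis at $m=\lfloor\varepsilon n\rfloor$ with $\varepsilon\in(0,1)$ fixed and condition on $\mathcal F_m$. On $\{M_n<0\}$ one has $S_m<0$, and, given $\mathcal F_m$, the remaining path is a fresh copy of the walk started at $S_m$ and conditioned to stay below zero for the next $n-m$ steps. Combining the Markov property with Lemma \ref{maxim} (used as $\mathbf P_y(M_{n-m}<0)=\mathbf P(M_{n-m}<-y)$, together with its bound and its limit), dominated convergence, and the regular variation $a_m/a_n\to\varepsilon^{1/\alpha}$, $b_m/b_n\to\varepsilon^{1-1/\alpha}$, gives: (i) the conditional law of $\tfrac1{a_n}S_m$ given $\{M_n<0\}$ converges to that of $L^-_\varepsilon$; and (ii) given the value of $\tfrac1{a_n}S_m$, the conditional law of $(\tfrac1{a_n}S_{nt})_{\varepsilon\le t\le1}$ given $\{M_n<0\}$ converges to the law of $L$ on $[\varepsilon,1]$ started from that value and conditioned to remain $\le0$ on $[\varepsilon,1]$ — a conditioning of positive probability whose boundary $\{\sup_{\varepsilon\le t\le1}L_t=0\}$ is a null event for $L$. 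Since the post-$\varepsilon$ part of the non-positive meander given $L^-_\varepsilon$ is exactly this conditioned Lévy process, (i) and (ii) combine to give $(\tfrac1{a_n}S^n\mid M_n<0)\stackrel{d}{\to}L^-$ in $D[\varepsilon,1]$, for every $\varepsilon>0$.

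It remains — and this is the real work — to control the path near $t=0$, i.e. to show
\[
\lim_{\varepsilon\downarrow0}\ \limsup_{n\to\infty}\ \mathbf P\Big(\sup_{0\le t\le\varepsilon}\tfrac1{a_n}|S_{nt}|>\eta\ \Big|\ M_n<0\Big)\ =\ 0\qquad(\eta>0),
\]
which together with the previous paragraph upgrades the convergence to $D[0,1]$ and in particular yields tightness. The mechanism is the domination, for every $\mathcal F_m$-measurable event $A$,
\[
\mathbf P(A\mid M_n<0)\ \le\ \frac{\mathbf P(M_m<0)}{\mathbf P(M_n<0)}\ \mathbf P(A\mid M_m<0)\ =\ \big(\varepsilon^{-(1-1/\alpha)}+o(1)\big)\ \mathbf P(A\mid M_m<0),
\]
obtained from the Markov property at $m$, the trivial bound $\mathbf P_y(M_{n-m}<0)\le1$ and Lemma \ref{maxim}. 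Applied to $A=\{\sup_{j\le m}|S_j|>\eta a_n\}$ this reduces the claim to a scale-$a_m$ tightness bound for the walk conditioned on $\{M_m<0\}$ and the tail behaviour of $\sup_{0\le t\le1}|L^-_t|$, which in the spectrally positive case is very light since $L$ creeps downward; a short self-improving argument, exactly as in \cite{agkv}, then closes the loop. I expect this interchange of the limits $n\to\infty$ and $\varepsilon\downarrow0$ to be the main obstacle; the rest is a routine transcription of \cite{agkv} with Lemma \ref{maxim} in place of the bounds for $\{L_n\ge0\}$.
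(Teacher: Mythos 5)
Your plan of transcribing the proof of Lemma~2.3 in \cite{agkv}, replacing the estimates for $\{L_n\ge0\}$ with those for $\{M_n<0\}$ supplied by Lemma~\ref{maxim}, is precisely what the paper's own one-sentence proof instructs (it defers to the ``suitably adapted decomposition (2.10)'' of \cite{agkv} and to \cite{do}), and your sketch — split at $m=\lfloor\varepsilon n\rfloor$, use the Markov property together with the asymptotics and bound from Lemma~\ref{maxim}, then close the tightness gap near $t=0$ via the domination $\mathbf P(A\mid M_n<0)\le(\mathbf P(M_m<0)/\mathbf P(M_n<0))\,\mathbf P(A\mid M_m<0)$ and regular variation — is a correct fleshing-out of that reference. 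The one caveat is in your proposed shortcut of applying \cite{agkv}'s Lemma~2.3 to $-S$: that conditions on $\{M_n\le0\}$ rather than $\{M_n<0\}$, and the two events are not a priori interchangeable (cf.\ the separate treatment of $x=0$ in the proof of Lemma~\ref{maxim}), so your primary transcription route is the safer one.
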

\noindent
The proof follows exactly the same arguments  as the proof of Lemma 2.3 in \cite{agkv}, i.e. using  the suitably adapted decomposition (2.10) therein and \cite{do}.

From $L^-$ we obtain the process $L^*$ as follows.  
Let $\Lambda: D[0,1] \to D[0,1]$ be the mapping 
$ g \mapsto \hat g $ 
given by
\[   \hat g(t)= g(1)- g(s-) \ , \  0 \le t \le 1   \ , \quad \text{with } s=1-t  \]
and $g(0-)=0$.  $\Lambda$ is a continuous mapping and $\Lambda^{-1}=\Lambda$. Note that $\Lambda$ maps the subset  $D^-=\{ g\in D[0,1]: \sup_{ t \ge \varepsilon} g(t) < 0  \text{ for all } \varepsilon > 0\}$ onto the set $D^*=\{g\in D[0,1] : \inf_{ s \le 1-\varepsilon} g(s)> g(1)  \text{ for all } \varepsilon > 0 \}$.

Now let
\[ L^* = \Lambda(L^-) \ . \]
Since $L^- \in D^-$ a.s. it follows that $L^* \in D^*$ a.s. This means that $L^*$ takes its infimum at the end a.s. $L^*$ may be viewed as the process $(L_t)_{0 \le t \le 1}$, conditioned to attain its infimum at $t=1$. This becomes clear from the following result.

\begin{lemma} \label{funclimit}
Under Assumptions A1 and A2  
\[  \big(\tfrac 1{a_n}S^{n} \mid \tau_n=n\big) \ \stackrel{d}{\rightarrow}  \ L^*\]
in $D[0,1]$.
\end{lemma}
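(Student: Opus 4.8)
The plan is to reduce the assertion to Lemma~\ref{funclimit2} via the duality map $\Lambda$. Recall from Section 2.1 the dual random walk $\hat S_k = S_n - S_{n-k}$, $0 \le k \le n$, which has the same law as $(S_k)_{0 \le k \le n}$ under $\mathbf P$, and the identity $\{\tau_n = n\} = \{\hat M_n < 0\}$ with $\hat M_n = \max(\hat S_1, \ldots, \hat S_n)$. Passing to rescaled processes, the path $(\tfrac 1{a_n} S^n_t)_{0 \le t \le 1}$ under the conditioning $\tau_n = n$ corresponds, via the time-reversal-and-reflection map $\Lambda$ acting on the discrete path, to the path $(\tfrac 1{a_n}\hat S^n_t)_{0 \le t \le 1}$ under the conditioning $\hat M_n < 0$ (up to the negligible discrepancy caused by $g(s-)$ versus $g(s)$ at finitely many jump times, which vanishes in the Skorohod metric because the rescaling kills individual increments). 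Concretely, if $\Lambda_n$ denotes the discrete analogue $g \mapsto (g(1) - g((nt)^-/n))$, then $\tfrac 1{a_n} S^n \stackrel{d}{=} \Lambda_n(\tfrac 1{a_n}\hat S^n)$, and $\Lambda_n \to \Lambda$ uniformly on paths that are bounded, so one can first replace $\Lambda_n$ by $\Lambda$ at asymptotically no cost.

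First I would make this last point precise: show that $\Lambda_n(\tfrac 1{a_n}\hat S^n)$ and $\Lambda(\tfrac 1{a_n}\hat S^n)$ differ by $o(1)$ in the Skorohod $J_1$-metric, uniformly in the conditioning. This is routine because the two differ only through evaluating the càdlàg path at $s-$ versus at a neighboring grid point, and a single increment $X_k/a_n$ tends to $0$ in probability (indeed the maximal increment over $k \le n$, divided by $a_n$, tends to $0$ in probability since $X$ is in the domain of attraction of a stable law with $\alpha > 1$). Second, by Lemma~\ref{funclimit2} with $x = 0$,
\[ \big(\tfrac 1{a_n}\hat S^n \mid \hat M_n < 0\big) \stackrel{d}{\rightarrow} L^- \]
in $D[0,1]$ (using that $\hat S$ has the same distribution as $S$). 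Third, since $\Lambda: D[0,1] \to D[0,1]$ is continuous (as stated in the excerpt) and $L^- \in D^-$ a.s., while $\Lambda(D^-) = D^*$, the continuous mapping theorem yields
\[ \big(\tfrac 1{a_n}S^n \mid \tau_n = n\big) \stackrel{d}{=} \Lambda_n\big(\tfrac 1{a_n}\hat S^n \mid \hat M_n < 0\big) \stackrel{d}{\rightarrow} \Lambda(L^-) = L^* \ , \]
which is the claim.

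The main obstacle is the first step: the map $\Lambda$ is continuous on all of $D[0,1]$, but its discretization $\Lambda_n$ involves the left limit $g((1-t)^-)$, and one must check carefully that the $J_1$-distance between $\Lambda_n(h_n)$ and $\Lambda(h_n)$ converges to $0$ in probability when $h_n = \tfrac 1{a_n}\hat S^n$. The delicate point is that a time reversal turns right-continuity into left-continuity, so the appearance of $g(s-)$ rather than $g(s)$ in the definition of $\Lambda$ is not cosmetic; one needs to use that the limiting meander $L^-$ has, almost surely, no fixed discontinuity and that the approximating processes are step functions whose jumps become asymptotically small after rescaling, so that the choice of left versus right endpoint is immaterial in the limit. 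Once this bookkeeping with the Skorohod topology is handled, the rest is a direct application of duality, Lemma~\ref{funclimit2}, and the continuous mapping theorem.
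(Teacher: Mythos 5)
Your overall strategy is the same as the paper's: reduce to Lemma~\ref{funclimit2} via duality and the map $\Lambda$. But the justification you give for the ``routine'' step does not hold up. You claim that $\max_{k\le n}|X_k|/a_n \to 0$ in probability because $X$ lies in the domain of attraction of a stable law with $\alpha>1$. This is true only when the limit is Gaussian ($\alpha=2$); for $\alpha<2$ the normalized maximal increment converges in distribution to a nondegenerate limit (the largest jump of the limiting L\'evy process on $[0,1]$), so it does not vanish. Consequently the pointwise difference between $\Lambda_n(\tfrac1{a_n}\hat S^n)$ and $\Lambda(\tfrac1{a_n}\hat S^n)$ is \emph{not} uniformly small: at times adjacent to the biggest jump the two step functions differ by an amount of order one, and the sup-norm estimate you rely on fails.

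The correct repair is different in kind. The two processes you want to compare are not close in sup norm; they are close in $J_1$ because they are related by a deterministic time shift of $1/n$. Indeed $\Lambda(\tfrac1{a_n}\hat S^n)$ is exactly the rescaled walk shifted one step, namely the process $T^n$ with $T^n_t = S^n_{t+1/n}$ for $t\le 1-\tfrac1n$ and $T^n_t = S^n_1$ otherwise, so the $\Lambda_n$ versus $\Lambda$ discrepancy is identically a time shift, not an additive error. A reparametrization $\lambda_n$ with $\sup_t|\lambda_n(t)-t|\le 1/n$ then matches the jumps of $S^n$ and $T^n$ exactly except near the endpoints, and the end effects are controlled by the single increments $X_1/a_n$ and $X_n/a_n$ (which do vanish for fixed index) together with a.s.\ continuity of $L^-$ at $t=1$. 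This is precisely what the paper's proof does by first replacing $S^n$ with $T^n$. Your remark that $L^-$ has no fixed discontinuity is the right ingredient for the endpoint, but it must be paired with the time-shift argument rather than with uniform smallness of increments.
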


\begin{proof}
We may replace $S^n$ by the process $T^n$ given by  $T^n_t= S_{t+1/n}^n$ for $t\le1- \tfrac 1n$ and $T^n_t=S^n_1$ for $1-\tfrac 1n < t \le 1$. From Lemma \ref{funclimit2} 
\[ \big(\tfrac 1{a_n}\Lambda(T^{ n}) \mid M_{n} < 0\big) \ \stackrel{d}{\rightarrow}  \ L^*  \ . \]
Now $\Lambda(T^n)$ is obtained from $S^n$, if we just interchange the jumps in $S^n$ from $X_1, \ldots, X_n$ to $X_n \ldots, X_1$. This corresponds to proceeding to the dual random walk, and it follows
\[ \big(\tfrac 1{a_n}S^n \mid \tau_n=n\big) \ \stackrel{d}{\rightarrow}  \ L^*  \ . \]
This is the claim.
\end{proof}

We end this section by some remarks on the distribution of $L^*_1$. First $L_1$ has a stable distribution, thus it has a density with respect to Lebesgue measure and is unbounded from below. Since we are in the spectrally positive case, $L$ has no negative jumps a.s. Therefore we may use fluctuation theory for the process $L^\downarrow$, which is the 
L\'evy-process, conditioned to take values in $(-\infty,0]$, see \cite{be}, Section VII.3. From Corollary 16 therein it follows that $L^\downarrow_1$ has a density  and is unbounded from below, too. From \cite{chau} (see also \cite{ca}) it follows that the distributions of  $L^\downarrow_1$ and $L^*_1$ are mutually absolutely continuous, therefore also the distribution $\nu$ of $L^*_1$ has a density and is not concentrated on some compact interval.

\paragraph{2.4 Further limit results} 
Let $Q_j=Q_1$ for $j \le 0$.
 
\begin{lemma} \label{leQ}
Under Assumptions A1 and A2 for $m \ge 0,k \ge 1$ the distribution of  
\begin{align*} \Big( \big(Q_{\tau_n+1},\ldots, Q_{\tau_n+k}\big), \big(Q_{\tau_n},\ldots, Q_{\tau_n-k+1}\big),\frac{(S_{\tau_n}, S_{n-m})}{a_n} \Big)  
\end{align*}
converges weakly to a probability measure $ \mu_k' \otimes \mu_k'' \otimes\mu$, where  $\mu_k'$, $\mu_k''$ are the distributions 
of $(Q_1, \ldots,Q_k)$ under the probability measures $\mathbf P^+$, $\mathbf P^-$ and $\mu$ is a nondegenerate probability measure on $\mathbb R^2$.
\end{lemma}
\begin{proof} 
Let for $ r \ge 0$  
\begin{equation*}
Q^+(r) =( Q_{r+1},...,Q_{r+k}) \ , \quad 
Q^-(r)=( Q_{r},...,Q_{r-k+1}) \ .
\end{equation*}
Let $\phi_1,\phi_2:\Delta^k\to \mathbb R $ be bounded functions and $\phi_3,\phi_4: \mathbb R\to \mathbb R$ be bounded continuous functions. A decomposition with respect to $\tau _{n}$ yields
\begin{align}
&\mathbf{E}\big[ \phi _{1}( Q^-( \tau _{n} )) \phi _{2}( Q^+( \tau _{n} ) 
) \phi _{3}( \tfrac{S_{\tau _{n}}}{a_{n}})  \phi _{4}( \tfrac{%
S_{n-m}-S_{\tau _{n}}}{a_{n}}) \big]  \notag \\
&=\sum_{r=0}^{n}\mathbf{E}\big[ \phi _{1}( Q^-(
r ) ) \phi_{2}( Q^+( r ) )\phi _{3}( \tfrac{S_{r}}{a_{n}})  \phi _{4}( \tfrac{%
S_{n-m}-S_{r}}{a_{n}}) ;\tau _{n}=r\big]\ .  \label{Dec1}
\end{align}%
Letting
$ L_{r,n} = \min (S_{r+1}, \ldots, S_n)-S_r$
and using duality we get for $r >k$
\begin{align*}
 &\mathbf{E}[ \phi _{1}( Q^-(r) ) \phi _{2}( Q^+(r) )
\phi _{3}( \tfrac{S_r}{a_{n}}) \phi _{4}( \tfrac{%
S_{n-m}-S_r}{a_{n}}) ;\tau _{n}=r] \\
&= \mathbf{E}[  \phi _{1}( Q^-(r)
)  \phi _{3}( \tfrac{S_r}{a_{n}})  \phi _{2}( 
Q^+(r) )  \phi _{4}( \tfrac{%
S_{n-m}-S_r}{a_{n}}) ;\tau_r=r,L_{r,n} \geq 0] \\
&= \mathbf{E}[  \phi _{1}( Q^-(r)
)  \phi _{3}( \tfrac{S_r}{a_{n}})  ;\tau_r=r] \mathbf{E}[ \phi _{2}( 
Q^+(0) )  \phi _{4}( 
\tfrac{S_{n-m-r} }{a_{n}}) ;L_{n-r} \geq 0]  \\
&= \mathbf{E}[ \phi _{1}( 
Q^+(0) )\phi _{3}( \tfrac{S_{r} }{a_{n}}%
) ;M_{r} <0] \mathbf{E}[ \phi _{2}( 
Q^+(0) ) \phi _{4}( 
\tfrac{S_{n-m-r} }{a_{n}}) ;L_{n-r} \geq 0] \ .
\end{align*}%
Moreover for $r>k$
\begin{align*}
&\frac{\mathbf{E}\big[ \phi _{1}( 
Q^+(0) )\phi _{3}( \tfrac{S_{r} }{a_{n}}%
) ;M_{r} <0\big] }{\mathbf P(M_r < 0)} \\
&\quad =  \mathbf{E}\big[ \phi _{1}( 
Q^+(0) )\mathbf E_{S_k} [ \phi_3(\tfrac{S_{r-k}}{a_n}) \mid M_{r-k}<0 ]\tfrac{\mathbf P_{S_k}(M_{r-k}<0)}{\mathbf P(M_r<0)} ; M_k < 0\big]\ .
\end{align*}
Therefore by Lemmas \ref{funclimit2}, \ref{maxim} and dominated convergence, if $r_n \sim tn$ for some $0<t<1$, then $a_{r_n}/ a_n  \sim t^{\frac 1\alpha}$ and
\begin{align*}
&\frac{\mathbf E\big[ \phi _1( 
Q^+(0) )  \phi _3( \tfrac{S_{r_n} }{a_n}%
) ;M_{r_n}<0\big] }{\mathbf P(M_{r_n} < 0)}\\ 
&\qquad\qquad\qquad\to \mathbf E \big[\phi _{1}( 
Q^+(0) )v(S_k); M_k<0\big] \, \mathbf E[\phi_3(t^{\frac 1\alpha}L_1^-)]\\
&\qquad\qquad\qquad = \mathbf E^- \big[\phi _{1}( 
Q^+(0) ) \big] \, \mathbf E[\phi_3(t^{\frac 1\alpha}L_1^-)] \ .
\end{align*} 
In much the same way, letting $L^+$ be the positive L\'evy meander and using Lemma 2.3 from \cite{agkv},   it follows that 
\begin{align*}
&\frac{\mathbf{E}[ \phi _{2}( 
Q^+(0) ) \phi _{4}( 
\tfrac{S_{n-m-r_n} }{a_{n}}) ;L_{n-r_n} \geq 0]  }{\mathbf P(L_{n-r_n} \geq 0)}\\ 
&\qquad\qquad\qquad \to \mathbf E^+ \big[\phi _{2}( 
Q^+(0) ) \big] \, \mathbf E[\phi_4((1-t)^{\frac 1\alpha}L_{1 }^+)] \ .
\end{align*}
Since $\mathbf P(M_{r_n} < 0)\mathbf P(L_{n-r_n} \geq 0)= \mathbf P(\tau_n=r_n)$, we obtain for $r_n \sim tn$ and $0<t<1$
\begin{align*}
 &\mathbf{E}[ \phi _{1}( Q^-(r_n)) \phi _{2}( Q^+(r_n) ) \phi _{3}( \tfrac{S_{r_n}}{a_{n}})  \phi _{4}( \tfrac{S_{n-m}-S_{r_n}}{a_{n}}) \mid \tau _{n}=r_n] \\
 &\ \to  \mathbf E^- \big[\phi _{1}( Q_{1} ,...,Q_{k} ) \big] \mathbf E^+ \big[\phi _{2}( Q_{1} ,...,Q_{k} ) \big] \mathbf E[\phi_3(t^{\frac 1\alpha}L_1^-)]\mathbf E[\phi_4((1-t)^{\frac 1\alpha}L_{1 }^+)] \ .
\end{align*}

Now in view of Assumption A2, the generalized arcsine law (see \cite{bi}) is valid for $\tau
_{n} $, i.e. $\tau _{n}/n$ is convergent in distribution to a Beta-distribution with a density, which we denote by $g(t)\, dt$. Therefore it follows from \eqref{Dec1} that
\begin{align*}
 \mathbf{E}\big[ &\phi _{1}( Q^-( \tau _{n} )) \phi _{2}( Q^+( \tau _{n} ) 
) \phi _{3}( \tfrac{S_{\tau _{n}}}{a_{n}})  \phi _{4}( \tfrac{%
S_{n-m}-S_{\tau _{n}}}{a_{n}}) \big] \\
&\quad \to  \mathbf E^- \big[\phi _{1}( Q_{1} ,...,Q_{k} ) \big] \mathbf E^+ \big[\phi _{2}( Q_{1} ,...,Q_{k} ) \big] \\&\qquad\qquad \mbox{}\times \int_0^1 \mathbf E[\phi_3(t^{\frac 1\alpha}L_1^-))]\mathbf E[\phi_4((1-t)^{\frac 1\alpha}L_{1 }^+)] \, g(t)\, dt \ .
\end{align*}
This gives the claim.
\end{proof}

\noindent
Next let $0=t_0<t_1 < \cdots < t_r < t_{r+1}=1$ and for $1 \le i \le r$
\begin{align}\sigma_{i,n} = \min \{k  : nt_{i-1}  \le k \le nt_i,  S_k \le S_j \text{ for all } nt_{i-1}  \le j \le nt_i  \} \label{sigma}
\end{align}
be the first moment, when $S_k$ takes its minimum between $nt_{i-1}$ and $nt_i$. 

\pagebreak

\begin{lemma} \label{cor}
Let $ m\ge 0$ and $k,r \ge 1$. Then under Assumptions A1,  A2, given the event $\tau_{n-m}=n-m$, the random elements in $\Delta^{2k}$
\[Q^{(i)}=\big( (Q_{\sigma_{i,n} +1},\ldots,Q_{\sigma_{i,n}+k}), (Q_{\sigma_{i,n}}, \ldots, Q_{\sigma_{i,n}-k+1})\big) \ , \quad i=1, \ldots, r \ , \]
are asymptotically independent with asymptotic distribution $\mu_k' \otimes \mu_k''$. Also, given $\tau_{n-m}=n-m$, they are asymptotically independent from the random vector
\[ \tfrac 1{a_n} (S_{\sigma_{1,n}}, S_{nt_1},\ldots,S_{\sigma_{r,n}}, S_{nt_r}) \ . \]
\end{lemma}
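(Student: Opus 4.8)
The plan is to extend the proof of Lemma \ref{leQ} from the single minimizer $\tau_n$ to the $r$ interval-minimizers $\sigma_{1,n},\dots,\sigma_{r,n}$ and to bring in the conditioning on $\tau_{n-m}=n-m$. As there, one decomposes the path of $S$ on $[0,n-m]$ at the $\sigma_{i,n}$ together with the deterministic checkpoints $nt_1,\dots,nt_r$, uses independence of increments and duality on the resulting pieces, and feeds each piece into the functional limit results of Section 2. Writing $Q^+(j)=(Q_{j+1},\dots,Q_{j+k})$, $Q^-(j)=(Q_{j},\dots,Q_{j-k+1})$ and $\vec S_n=(S_{\sigma_{1,n}},S_{nt_1},\dots,S_{\sigma_{r,n}},S_{nt_r})$, it suffices to show, for all bounded $\phi^{(i)}_1,\phi^{(i)}_2:\Delta^{k}\to\mathbb R$ and bounded continuous $\psi:\mathbb R^{2r}\to\mathbb R$, that
\begin{align*}
&\mathbf E\Big[\prod_{i=1}^{r}\phi^{(i)}_1\big(Q^+(\sigma_{i,n})\big)\,\phi^{(i)}_2\big(Q^-(\sigma_{i,n})\big)\,\psi\big(\tfrac{1}{a_n}\vec S_n\big)\ \Big|\ \tau_{n-m}=n-m\Big]\\
&\qquad\longrightarrow\ \Big(\prod_{i=1}^{r}\mathbf E^-\!\big[\phi^{(i)}_2(Q_1,\dots,Q_k)\big]\,\mathbf E^+\!\big[\phi^{(i)}_1(Q_1,\dots,Q_k)\big]\Big)\,C(\psi),
\end{align*}
where $C(\psi)=\lim_n\mathbf E\big[\psi(\tfrac1{a_n}\vec S_n)\mid\tau_{n-m}=n-m\big]$. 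This limit exists by Lemma \ref{funclimit} (applied with $n-m$ in place of $n$, which is harmless since $m$ is fixed and $a_{n-m}\sim a_n$) and the continuous mapping theorem, since the map $g\mapsto\big(\inf_{s\le t_i}g(s),\,g(t_i)\big)_{i\le r}$ is continuous at a.e.\ path of $L^*$: $L^*$ has no fixed discontinuities and, being built from a spectrally positive Lévy process, attains its infimum over each $[0,t_i]$ at a unique time a.s.

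To prove the display, decompose the left-hand expectation by summing over the values $\sigma_{i,n}=s_i$, $nt_{i-1}\le s_i\le nt_i$. On $\{\sigma_{1,n}=s_1,\dots,\sigma_{r,n}=s_r\}\cap\{\tau_{n-m}=n-m\}$ the increments fall into $2r+1$ consecutive pieces, cut at the $s_i$ and the $nt_i$, which alternate between \emph{descent} pieces --- namely $[0,s_1],[nt_1,s_2],\dots,[nt_{r-1},s_r],[nt_r,n-m]$, on each of which $S$ stays strictly above its terminal value (which is $S_{\sigma_{i,n}}$, respectively the global minimum $S_{n-m}$ on the last one) --- and \emph{ascent} pieces $[s_i,nt_i]$, on each of which $S$ stays above its initial value $S_{\sigma_{i,n}}$. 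Apart from the single coupling constraint $S_{n-m}\le\min_{i}S_{\sigma_{i,n}}$ forced by $\tau_{n-m}=n-m$, these piece-events depend on disjoint blocks of increments; conditioning on the heights at the cut points decouples them, and the coupling constraint then affects only the last piece. Under the rescaling it reads $L^*_1\le\inf_{s\le t_i}L^*_s$ for all $i$, which is automatic since $L^*$ attains its overall infimum at $t=1$, so it is no obstruction. Moreover, once $n$ is large, each window $Q^-(\sigma_{i,n})$, $Q^+(\sigma_{i,n})$ lies strictly inside a single piece --- the descent piece ending at $\sigma_{i,n}$, respectively the ascent piece starting there --- outside an asymptotically negligible set of configurations (those with $\sigma_{i,n}$ within $k$ of a cut point), deferred to the last paragraph.

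Now apply to each piece the same normalisation and limit passage as in the proof of Lemma \ref{leQ}, using Lemmas \ref{pro1}, \ref{maxim}, \ref{funclimit2} and \ref{limitEminus} together with Lemma 2.3 of \cite{agkv}. By duality a descent piece of asymptotic length $\propto n$ becomes an $\{M_\bullet<0\}$-walk, whose normalised environment near the minimum converges to the $\mathbf P^-$-law $\mu_k''$ and whose rescaled path converges to a non-positive meander; an ascent piece is an $\{L_\bullet\ge 0\}$-walk, whose normalised environment near the minimum converges to the $\mathbf P^+$-law $\mu_k'$ and whose rescaled path converges to a non-negative meander. Crucially these environment limits do not depend on the asymptotic positions $s_i/n\in(t_{i-1},t_i)$ of the minima, so the factors $\mathbf E^{\pm}\!\big[\phi^{(i)}_\bullet(Q_1,\dots,Q_k)\big]$ pull out of the sum over $(s_1,\dots,s_r)$; what remains of the sum reassembles $\mathbf E\big[\psi(\tfrac1{a_n}\vec S_n);\tau_{n-m}=n-m\big]/\mathbf P(\tau_{n-m}=n-m)$, which tends to $C(\psi)$. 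This gives the display and hence both asymptotic-independence assertions of the lemma.

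I expect the main obstacle to be the finite-$n$ bookkeeping: checking that $\{\sigma_{i,n}=s_i\ \forall i,\ \tau_{n-m}=n-m\}$ really splits into the stated pieces near the cut points --- the pieces meet at the deterministic times $nt_i$, not at the random $\sigma_{i,n}$, and the ``first minimizer'' convention introduces strict versus non-strict inequalities as well as the edge cases $\sigma_{i,n}\in\{nt_{i-1},nt_i\}$ --- and supplying the uniform bounds that let the sum over $(s_1,\dots,s_r)$ be passed to the limit by dominated convergence, in particular controlling the near-degenerate configurations where some $\sigma_{i,n}/n$ approaches $t_{i-1}$ or $t_i$, or the last piece is short. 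All of this is handled exactly as in the proof of Lemma \ref{leQ}, via the uniform estimates of Lemmas \ref{maxim} and \ref{pro1}; the remainder is a routine but lengthy iteration of the single-checkpoint argument.
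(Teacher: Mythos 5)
Your approach is to redo the decomposition of Lemma~\ref{leQ} from scratch with $2r+1$ pieces, cutting at the random $\sigma_{i,n}$ and the deterministic $nt_i$, then arguing piece by piece. The paper takes a genuinely different and more economical route: it applies Lemma~\ref{leQ} as a black box to each sub-interval $[nt_{i-1},nt_i]$ (on each of which the restriction of the walk is a fresh walk, so $\sigma_{i,n}-nt_{i-1}$ plays the role of $\tau$ there), immediately obtaining asymptotic independence of the $r+1$ triples $(Q^{(i)},W_i)$, $V_{r+1}$ by independence of increments. The conditioning on $\tau_{n-m}=n-m$ is then resolved in one stroke by observing $\{\tau_{n-m}=n-m\}=\{\sigma_{r+1,n}=n-m\}\cap\{(W_1,\dots,W_r,V_{r+1})\in C\}$ for an explicit Borel set $C\subset\mathbb R^{2r+1}$, and checking that $C$ has positive limiting probability (because $\nu$ is not compactly supported) and $\nu$-null boundary (because $\nu$ has a density), so the portmanteau theorem allows one to condition on $C$ in the limit. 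This sidesteps all the re-summation and uniform domination you would need to supply.

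This brings out what I think is a genuine soft spot in your write-up. You dismiss the coupling constraint $S_{n-m}\le\min_iS_{\sigma_{i,n}}$ as ``no obstruction'' because ``under the rescaling it reads $L^*_1\le\inf_{s\le t_i}L^*_s$, which is automatic for $L^*$.'' But the conditioning $\tau_{n-m}=n-m$ is precisely what \emph{produces} $L^*$ in the limit; you cannot appeal to $L^*$ having its infimum at $t=1$ to argue the conditioning event is trivial --- that is circular. What is actually needed (and what the paper supplies) is that the event, expressed as a condition on the rescaled heights $W_1,\dots,W_r,V_{r+1}$, is a continuity set of strictly positive measure for the joint limit law; this is exactly where the properties of $\nu$ (density, non-compact support) are used, and they are not optional. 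If you keep your piece-by-piece route, you still must establish this continuity-set fact before you can pass to the limit inside the conditioning; simply asserting that the sum reassembles $\mathbf E[\psi;\tau_{n-m}=n-m]/\mathbf P(\tau_{n-m}=n-m)$ does not do it, because the environment factors you have ``pulled out'' and the heights are only \emph{asymptotically} independent, and the interchange of sum, limit and conditioning is exactly the place where the argument can break. The paper's Borel-set device exists to make that interchange legitimate; your proposal leaves it to a ``routine but lengthy'' appendix that, as sketched, has not been shown to close.
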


\begin{proof} Recall from above that, given $\tau_{n }=n $, the distribution of $\tfrac 1{a_n} S_{n } $ is weakly convergent to a probability measure $\nu$ on $\mathbb R^-$, the distribution of $L^*_1$, which possesses a density and is not concentrated on a compact interval. 

Let 
\[  \sigma_{r+1,n}=\min \{k  : nt_r  \le k \le n-m,  S_k \le S_j \text{ for all } nt_r  \le j \le n-m \}  \]
and for $i \le r$
\[U_i= \tfrac 1{a_n}(S_{\sigma_{i,n}}-S_{nt_{i-1}}) \ , \ V_i=\tfrac 1{a_n}(S_{nt_i}-  S_{nt_{i-1}}) \ , \ V_{r+1}=\tfrac 1{a_n}(S_{n-m}-  S_{nt_{r}})\] and $W_i=(U_i,V_i)$. Since $(a_n)$ is regularly varying, from the last lemma and from our assumptions on independence it follows that the random variables
$Q^{(1)}, \ldots, Q^{(r)}, W_1,\ldots,W_r,V_{r+1} $
are asymptotically independent. Conditioning on the event $\sigma_{r+1,n}=n-m$ does only effect $V_{r+1}$. Thus from Lemma \ref{leQ}
\[ \big(Q^{(1)}, \ldots, Q^{(r)}, W_1, \ldots, W_{r},V_{r+1} \  |\ \sigma_{r+1,n}=n-m\big) \stackrel{d}{\to}  (\mu_k' \otimes \mu_k'')^{\otimes r} \mu_1\otimes \cdots \otimes \mu_r\otimes \nu \ , \]
where the probability measures $\mu_i$ also depend on $t_i-t_{i-1}$. If a Borel set $A \subset \mathbb R^{2r+1}$ satiesfies $\mu_1\otimes \cdots \otimes \mu_r\otimes \nu(A) >0$ and $\mu_1\otimes \cdots \otimes \mu_r\otimes \nu(\partial A) =0$, it follows
\[  \big(Q^{(1)}, \ldots, Q^{(r)} \  |\  (W_1, \ldots, W_{r},V_{r+1}) \in A, \sigma_{r+1,n}=n-m\big) \stackrel{d}{\to}   (\mu_k' \otimes \mu_k'')^{\otimes r} \ .  \]
We apply this result to $A$ of the form $A=B \cap C$, where the Borel set $B$ satisfies the same conditions as $A$, and  \[C= \Big\{(u_1,v_1, \ldots,u_r,v_r,v_{r+1}):u_{j} > \sum_{i=j}^{r+1} v_i\text { for } j \le r \Big\}\ . \] Since $\nu$ is not concentrated on a compact set, $\mu_1\otimes \cdots \otimes \mu_r\otimes \nu(C) >0$, and because $\nu$ has a density, $\mu_1\otimes \cdots \otimes \mu_r\otimes \nu(\partial C) =0$. As
\begin{align*}
\{ \tau_{n-m}=n-m \} =& \ \{  S_{\sigma_{j,n}} > S_{n-m} \text { for } j \le r,\sigma_{r+1,n}=n-m \}\\
=& \  \{ (W_1, \ldots,W_r,V_{r+1}) \in C, \sigma_{r+1,n}=n-m \} 
\end{align*}
we obtain
\[ (Q^{(1)}, \ldots, Q^{(r)} \  |\  (W_1, \ldots, W_{r},V_{r+1}) \in B, \tau_{n-m}=n-m) \stackrel{d}{\to}   (\mu_k' \otimes \mu_k'')^{\otimes r} \ .  \]
The choice $B= \mathbb R^{2r+1}$ gives the asymptotic distribution of $(Q^{(1)}, \ldots, Q^{(r)})$. Since $(S_{\sigma_{1,n}}, S_{nt_1},\ldots,S_{\sigma_{r,n}}, S_{nt_r})$ is obtained from $(W_1, \ldots,W_r,V_{r+1})$ by linear combinations, also the asymptotic independence follows.
\end{proof}

\section{Proof of Theorem 1.1 to Theorem  1.3}

Define
\[ \eta_{k} \ = \ \sum_{y=0}^\infty y(y-1) Q_k(y) \Big/ \Big( \sum_{y=0}^\infty y Q_k(y) \Big)^2 \ , \quad k \geq 1 \ . \]

\begin{lemma} \label{le2} Assume Assumptions A1 to A3. Then for all $x \ge 0$
\[ \sum_{k=0}^\infty \eta_{k+1} e^{-S_k} \ < \ \infty \qquad \mathbf P^+_x \text{ -a.s.} \]
and for all $x \le 0$
\[ \sum_{k=1}^\infty \eta_{k} e^{S_k} \ < \ \infty \qquad \mathbf P^-_x \text{ -a.s.} \]
\end{lemma}

The proof of the first statement can be found in \cite{agkv} (see Lemma 2.7 therein under condition B1 and B2), the second one is proven in just the same way. 

\begin{lemma} \label{le4}
Under Assumptions A1 to A3 there is a non-vanishing finite measure $p'$ on $\mathbb N_0$ with $p'(0)=0$ such that the following holds: Let $Y_n$ be  uniformly bounded random variables of the form $Y_n= \varphi_n(Q_1, \ldots, Q_{n-r_n})$ with natural numbers $r_n \to \infty$  and let $\ell$ be a real number such that for all $m \in \mathbb N_0$
\[\mathbf E[Y_n \mid \tau_{n-m}=n-m] \to \ell \]
as $n \to \infty$. Also let  $\psi: \mathbb N_0 \to \mathbb R$ be a bounded function with $\psi(0)=0$. Then
\[  \mathbf E[Y_n \psi(Z_n) e^{-S_n} \mid \tau_n=n ] \to  \ell \int \psi \, dp' \ . \]
\end{lemma}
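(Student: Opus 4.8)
The plan is to turn everything, via generating functions, into a computation about the associated random walk, and then to extract the factor $\ell$ by a decoupling in which the "bulk" of the walk screens off the early environment (on which $Y_n$ depends) from the last few generations (which in the limit determine $\psi(Z_n)e^{-S_n}$). Since $\{\tau_n=n\}$ is $\Pi$‑measurable, $\mathbf E[Y_n\psi(Z_n)e^{-S_n}\mid\tau_n=n]=\mathbf E[\,Y_n\,e^{-S_n}\mathbf E[\psi(Z_n)\mid\Pi]\mid\tau_n=n\,]$. It therefore suffices to treat $\psi(z)=1-s^z$, $s\in[0,1)$ (these vanish at $0$ and yield $\mathbf E[\psi(Z_n)\mid\Pi]=1-F_{1,n}(s)$, where $f_k$ is the generating function of $Q_k$, $m_k=f_k'(1)$ and $F_{k,n}=f_k\circ\cdots\circ f_n$, $F_{n+1,n}=\mathrm{id}$); a standard truncation argument, using finiteness of the limit measure, then gives all bounded $\psi$ with $\psi(0)=0$. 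Using $e^{S_n}=\prod_{k\le n}m_k$ and the classical identity (cf.\ \cite{agkv}),
\[
 \bigl(1-F_{1,n}(s)\bigr)e^{-S_n}\;=\;\Bigl(\tfrac1{1-s}+D_n(s)\Bigr)^{-1},\qquad
 D_n(s)=\sum_{k=1}^{n}e^{S_n-S_{k-1}}\,h_k\!\bigl(F_{k+1,n}(s)\bigr),
\]
where $h_k(t)=\frac1{1-f_k(t)}-\frac1{m_k(1-t)}\ge0$, $h_k(t)\to\eta_k/2$ as $t\uparrow1$, and $h_k(F_{k+1,n}(s))$ obeys the same estimates in terms of $\eta_k$ and $\zeta(a)$ that are used to prove Lemma \ref{le2} in \cite{agkv}. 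On $\{\tau_n=n\}$ one has $S_{k-1}\ge S_n$, so $D_n(s)\ge0$ and $0\le(1-F_{1,n}(s))e^{-S_n}\le1-s$; the object to study is the bounded quantity $\mathbf E[\,Y_n(\tfrac1{1-s}+D_n(s))^{-1}\mid\tau_n=n\,]$, which I aim to show converges to $\ell\,\Phi(s)$ for a function $\Phi$ with $\Phi(0)=0$, $\Phi(1-)\le1$ and $\Phi>0$ on $(0,1)$; from such a $\Phi$ one reads off a finite non‑zero measure $p'$ on $\mathbb N$ with $\Phi(s)=\int(1-s^z)\,dp'(z)$, and $p'(0)=0$ automatically.

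Next I would localise $D_n(s)$ near time $n$. By Lemma \ref{funclimit}, $\tfrac1{a_n}S^n\mid\{\tau_n=n\}\Rightarrow L^*$, and $L^*$ attains its infimum only at $t=1$; hence $S_{\lfloor tn\rfloor}-S_n$ is of order $a_n$ for each fixed $t<1$, while the law $\nu$ of $L^*_1$ has no atom at $0$. Fix $\delta>0$: the event $\{S_n\ge-\delta a_n\}$ contributes at most $\|Y_n\|_\infty(\nu([-\delta,0])+o(1))$, negligible as $\delta\to0$. On $\{\tau_n=n,\ S_n<-\delta a_n\}$ one shows, using the above walk behaviour together with Assumption A3 and Lemma \ref{le2} (which make $\max_{k\le n}\eta_k=e^{o(a_n)}$ and the relevant series a.s.\ summable), that $D_n(s)-D_n^{(K)}(s)\to0$ in probability as $n\to\infty$ and then $K\to\infty$, where $D_n^{(K)}(s)=\sum_{k=n-K+1}^{n}e^{S_n-S_{k-1}}h_k(F_{k+1,n}(s))$ is a function of $(Q_{n-K+1},\dots,Q_n)$ alone, since the increments $S_n-S_{k-1}$ and the compositions $F_{k+1,n}$ with $k>n-K$ involve only those environments. (The sum splits into an "early" part, killed because there the walk is of order $a_n$ above $S_n$ while $\max_k\eta_k=e^{o(a_n)}$; a "bulk" part, exponentially small; and a "tail near $n$", vanishing as $K\to\infty$ by Lemma \ref{le2}.)

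Finally the decoupling. Fix $K$, take $n$ large enough that $n-r_n<n-K$; then $Y_n$ is $\mathcal F_{n-K}$‑measurable while $X_{n-K+1},\dots,X_n$ are independent of $\mathcal F_{n-K}$. Since $\{\tau_n=n\}=\{S_n=\min_{n-K\le k\le n}S_k\}\cap\{S_n<\underline S_{n-K}\}$ with $\underline S_{n-K}=\min_{k\le n-K}S_k$, integrating out $X_{n-K+1},\dots,X_n$ turns the target, on $\{S_n<-\delta a_n\}$, into $\mathbf E[\,Y_n\,g_K(\underline S_{n-K}-S_{n-K})\,\mathbb 1\{\underline S_{n-K}\le-\delta a_n\}\,]+o(\mathbf P(\tau_n=n))$, where $g_K(y)=\mathbf E'[(\tfrac1{1-s}+D^{(K)}(s))^{-1};\ \tau'_K=K,\ S'_K<y]$ is computed from an independent $K$‑step walk‑with‑environment $(S'_i,Q'_i)$ and $D^{(K)}$ is the corresponding copy of $D_n^{(K)}$; here one uses that on $\{\tau_n=n,S_n<-\delta a_n\}$ one has $\underline S_{n-K}\le-\delta a_n$ off an event of vanishing conditional probability (otherwise the walk would fall below $-\delta a_n$ only during its last $K=O(1)$ steps, forcing $L^*_1=-\delta$, a $\nu$‑null event, so that the $\delta$‑indicator is then implied by $\{S'_K<y\}$). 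Now decompose according to $\tau'_{n-K}=\min\{k\le n-K:S_k=\underline S_{n-K}\}=n-K-j$: on this event the walk up to $n-K-j$ satisfies $\tau_{n-K-j}=n-K-j$, the following $j$ increments stay above $S_{n-K-j}$, and $\underline S_{n-K}-S_{n-K}$ depends only on them, so conditioning on $\mathcal F_{n-K-j}$ and using independence gives
\begin{align*}
 &\mathbf E\bigl[Y_n\,g_K(\underline S_{n-K}-S_{n-K});\,\tau'_{n-K}=n-K-j,\,\underline S_{n-K}\le-\delta a_n\bigr]\\
 &\qquad=\ \mathbf E\bigl[Y_n\,\mathbb 1\{\tau_{n-K-j}=n-K-j,\,S_{n-K-j}\le-\delta a_n\}\bigr]\cdot c_{K,j}(\delta),
\end{align*}
with $c_{K,j}(\delta)\ge0$ independent of $n$ and $c_{K,j}(\delta)\uparrow c_{K,j}:=\mathbf E[g_K(-S_j);L_j\ge0]$ as $\delta\to0$. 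By Lemma \ref{maxim}, $\mathbf P(\tau_{n-K-j}=n-K-j)\sim\mathbf P(\tau_n=n)$; the hypothesis (case $m=K+j$) gives $\mathbf E[Y_n\mid\tau_{n-K-j}=n-K-j]\to\ell$; and $\mathbf P(S_{n-K-j}>-\delta a_n\mid\tau_{n-K-j}=n-K-j)\to\nu([-\delta,0])$. Summing over $j$ (with the $j$‑tails controlled by the decay of $g_K(-S_j)$ and of $\mathbf P(L_j\ge0)$), dividing by $\mathbf P(\tau_n=n)$, and letting $n\to\infty$, then $\delta\to0$, then $K\to\infty$, yields $\mathbf E[Y_n\psi(Z_n)e^{-S_n}\mid\tau_n=n]\to\ell\,\Phi(s)$ with $\Phi(s)=\lim_{K\to\infty}\sum_{j\ge0}c_{K,j}$, as required.

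The two places I expect real work are: (i) the truncation estimate $D_n(s)-D_n^{(K)}(s)\to0$ on $\{S_n<-\delta a_n\}$, whose bookkeeping rests essentially on Assumption A3 (matching the $(\alpha+\varepsilon)$‑th moment of $\log^+\zeta$ to the scale $a_n\sim n^{1/\alpha}$) through Lemma \ref{le2} and on the fact that the rescaled walk converges to $L^*$, which attains its infimum only at the endpoint; and (ii) justifying the successive limits $n\to\infty$, the $j$‑summation, $\delta\to0$, $K\to\infty$, which leans on Lemma \ref{maxim} and the fluctuation structure behind $L^*$. Everything else is manipulation of the identity above.
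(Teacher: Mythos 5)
Your strategy is in essence the paper's: reduce to $\psi(z)=1-s^z$, replace $\mathbf E[\psi(Z_n)\mid\Pi]e^{-S_n}$ on $\{\tau_n=n\}$ by a quantity depending only on the last few environments, decompose $\{\tau_n=n\}$ according to where the minimum of the first $n-K$ steps sits, and invoke the hypothesis on $Y_n$ at that earlier time. The paper implements the "depends only on the last few environments" step differently, and more cleanly: instead of the Agresti identity and a hand-built truncation of $D_n(s)$, it works with $U_n(s)=(1-f_{n,0}(s))e^{-S_n}$ (the dual-indexed version), which is monotone in $n$, so the error $|U_n-U_m|$ on $\{\tau_n=n\}=\{M_n<0\}$ is controlled directly by Lemma \ref{limitEminus} and $\mathbf E^-[U_m-U_\infty]\to0$. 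In particular, your $\delta$-cutoff on $S_n$ is unnecessary: once the last $K$ increments are integrated out, $\{\tau_n=n\}$ factors exactly into $\{\tau'_K=K,\ S'_K<\underline S_{n-K}-S_{n-K}\}$, so the function $g_K$ already absorbs the constraint and $c_{K,j}$ does not actually depend on $\delta$ (as written, your $c_{K,j}(\delta)$ is a notational slip). Likewise your "early/bulk/tail" argument via $\max_k\eta_k=e^{o(a_n)}$ is not wrong, but it is exactly the work that duality plus Lemma \ref{le2} under $\mathbf P^-$ does in one step.

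The genuine gap is at the $j$-summation. You want to pass to the limit $n\to\infty$ in $\sum_{j\ge0}\mathbf E[Y_n;\tau_{n-K-j}=n-K-j,\ldots]\,c_{K,j}/\mathbf P(\tau_n=n)$, but the hypothesis $\mathbf E[Y_n\mid\tau_{n-m}=n-m]\to\ell$ applies term by term only for fixed $m=K+j$; moreover it even requires $r_n\ge K+j$, so for each $n$ only finitely many $j$ are available. To justify the interchange you need a dominated-convergence bound of the form $\sum_{j>l}\sup_n\mathbf P(\tau_{n-K-j}=n-K-j)\,c_{K,j}/\mathbf P(\tau_n=n)\to0$ as $l\to\infty$, and your remark that this is "controlled by the decay of $g_K(-S_j)$ and of $\mathbf P(L_j\ge0)$" is not an argument — $\mathbf P(\tau_{n-K-j}=n-K-j)/\mathbf P(\tau_n=n)$ can grow in $j$, and the needed summability of $c_{K,j}$ is nontrivial. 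The paper avoids computing the infinite series altogether: it truncates to a finite range $m\le j\le l$ with explicit $\varepsilon\mathbf P(\tau_n=n)$ tail bounds, and then, instead of evaluating the resulting finite sum, compares the general case to the case $Y_n=1$ and reads off the limit from $\mathbf E[(1-s^{Z_n})e^{-S_n};\tau_n=n]\sim\mathbf E^-[U_\infty(s)]\mathbf P(\tau_n=n)$. You would need either this comparison device or an honest tail estimate for the $j$-sum to close your argument. Finally, you should also supply the argument that the limiting $\Phi(s)$ is non-degenerate (the paper uses Agresti's lower bound together with Lemma \ref{le2} to show $\mathbf E^-[U_\infty(0)]>0$), and the continuity of $\Phi$ at $s=1$ so that the continuity theorem actually yields a proper measure $p'$ on $\mathbb N$; you assert these but do not derive them.
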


\begin{proof} 
Let $f_n(s)= \sum_{ k \ge 0} s^k Q_n(k) $, $0 \le s \le 1$, be the (random) generating function of $Q_n$, $n \ge 1$, and denote
\[f_{j,k} = \begin{cases} f_{j+1}\circ f_{j+2} \circ \cdots \circ f_k & \text{for } 0 \le j < k\ , \\
\text{id}  & \text{for }j=k\ , \\
f_{j}\circ f_{j-1} \circ \cdots \circ f_{k+1}  & \text{for } 0 \le k<j\ .
\end{cases}
\]
 For $0 \le k < n$ set
\[ L_{k,n}= \min(S_{k+1}, \ldots,S_n)-S_k \text{ and } L_{n,n}=0 \ . \]

First we look at the case $\psi(z)= 1-s^z$ with $0 \le s < 1$ (with $0^0=1$). We decompose the expectation according to the value of $\tau_{n-m}$ for some fixed $m \in \mathbb N_0$. For convenience we assume $0 \le Y_n \le 1$. Then for $l>m$ because of $\mathbf E[Z_n\mid \Pi]= e^{S_n}$ a.s. and $1-s^z \le z$
\begin{align*}
\mathbf E \big[ &Y_n  (1-s^{Z_n} )e^{-S_n}; \tau_{n-m} < n-l, \tau_n=n \big] \\ &\le 
\mathbf E \big[Z_n e^{-S_n}; \tau_{n-m} < n-l, \tau_n=n \big]=\mathbf P(\tau_{n-m} < n-l,\tau_n=n)\ .
\end{align*}
From duality 
\[ \mathbf P(   \tau_{n-m}< n-l, \tau_n=n)  \le \mathbf P(  S_k \ge S_m \text{ for some } l<k \le n, M_n < 0) \] 
and in view of Lemma \ref{limitEminus}
\begin{align*}
 \mathbf P(&S_k \ge S_m \text{ for some } l < k \le n, M_n < 0) \\ &\sim \mathbf P^-(S_k \ge S_m \text{ for some } k > l) \mathbf P(M_n < 0) \ .
 \end{align*}
 Since $S_k \to - \infty$ $\mathbf P^-$-a.s. (see Lemma 2.6 in \cite{agkv}), we obtain that for given $\varepsilon>0$ and $m\in \mathbb N$ the estimate $\mathbf P^-(S_k \ge S_m \text{ for some } k > l) < \varepsilon$ is valid, if only $l$   is chosen large enough. Altogether this implies that for $l$ sufficiently large 
\begin{align*}
\mathbf E[&Y_n ( 1-s^{Z_n}) e^{-S_n}; \tau_n=n ]\\ &=  \mathbf E[Y_n (1-s^{Z_n}) e^{-S_n};  \tau_{n-m}\ge n-l, \tau_n=n ] + \chi_1
\end{align*} 
where $|\chi_1| \le \varepsilon \mathbf P(\tau_n=n)$. 

Next from the branching property
\begin{align*}\mathbf E[Y_n &(1-s^{Z_n}) e^{-S_n};\tau_{n-m}\ge n-l, \tau_n=n ] \\ &= \mathbf E[Y_n (1-f_{0,n}(s)) e^{-S_n}; \tau_{n-m}\ge n-l,\tau_n=n ] \ .
\end{align*}
By means of duality
\begin{align*}
\big| \mathbf E[&Y_n ( 1-f_{0,n}(s)) e^{-S_n} ; \tau_{n-m}\ge n-l, \tau_n=n ] \\\ &\quad \mbox{}  - \mathbf E[Y_n (1-f_{n-m,n}(s)) e^{-(S_n-S_{n-m})}; \tau_{n-m}\ge n-l,\tau_n=n ] \big|\\ &\le \mathbf E\big[\big| ( 1-f_{0,n}(s)) e^{-S_n} -(1-f_{n-m,n}(s)) e^{-(S_n-S_{n-m})}\big|\ ; \tau_n=n \big]\\
&= \mathbf E\big[\big| ( 1-f_{n,0}(s)) e^{-S_n} -(1-f_{m,0}(s)) e^{-S_m }\big|\ ; M_n < 0 \big] \ .
\end{align*}
Now $U_n(s)= (1-f_{n,0}(s)) e^{-S_n}$ is decreasing in $n$ (see Lemma 2.3 in \cite{gkv}) with limit $U_\infty(s)$, and for given $\varepsilon > 0$ we obtain from Lemma \ref{limitEminus} for $n$ large enough
\begin{align*}
\big|& \mathbf E [Y_n ( 1-f_{0,n}(s)) e^{-S_n} ; \tau_{n-m}\ge n-l,\tau_n=n ] \\\ &\quad \mbox{}  - \mathbf E[Y_n (1-f_{n-m,n}(s)) e^{-(S_n-S_{n-m})}; \tau_{n-m}\ge n-l, \tau_n=n ] \big|\\ &\qquad\qquad\qquad\qquad \le 2\mathbf E^- [U_m(s)- U_\infty(s)]\mathbf P(\tau_n=n) \le   \varepsilon  \mathbf P(\tau_n=n)\ ,
\end{align*}
if only $m$ is chosen large enough. Now $\{\tau_{n-m}\ge n-l, \tau_n=n\}$ may be decomposed as $ \bigcup_{j=m}^l \big(\{\tau_{n-j}=n-j\}\cup\{L_{n-j,n-m} \ge 0, \tau_n=n\}\big)$
and for  large $n$ by $Y_n= \varphi_n(Q_1,\ldots,Q_{n-r_n})$
\begin{align*}
\mathbf E[Y_n (&1-f_{n-m,n}(s)) e^{-(S_n-S_{n-m})}; \tau_{n-j}=n-j, L_{n-j, n-m} \ge 0,\tau_n=n ] \\
&= \mathbf E[Y_n; \tau_{n-j}=n-j] \mathbf E[(1- f_{j-m,j}(s))e^{-(S_j-S_{j-m})};L_{j-m} \ge 0, \tau_j=j] \ .
\end{align*}
By assumption $\mathbf E[Y_n; \tau_{n-j}=n-j]  \sim \ell \mathbf P(\tau_n=n)$.
Putting pieces together we obtain
\begin{align*}
\mathbf E[&Y_n ( 1-s^{Z_n}) e^{-S_n}; \tau_n=n ]\\ &=  \mathbf E[Y_n (1-s^{Z_n}) e^{-S_n};  \tau_{n-m}\ge n-l, \tau_n=n ] + \chi_1 \\
&=  \ell\, \mathbf P(\tau_n=n)\sum_{j=m}^{ l} \mathbf E[(1- f_{j-m,j}(s))e^{-(S_j-S_{j-m})};L_{j-m} \ge 0, \tau_j=j] + \chi_2
\end{align*}
where $|\chi_2| \le 3\varepsilon \mathbf P(\tau_n=n)$. In particular we may apply this formula for $Y_n=1$, to obtain for large $n$
$$\big|\mathbf E[ Y_n ( 1-s^{Z_n}) e^{-S_n}; \tau_n=n ]- \ell \mathbf E[ ( 1-s^{Z_n}) e^{-S_n}; \tau_n=n ] \big| \le 6\varepsilon \mathbf P(\tau_n=n)$$
and our computations boil down to the formula
\begin{align*}
\mathbf E[Y_n ( 1-s^{Z_n}) e^{-S_n}; \tau_n=n ] \sim  \ell\, \mathbf E[  ( 1-s^{Z_n}) e^{-S_n}; \tau_n=n ] \ .
\end{align*}
The right-hand side may be written as $\ell\, \mathbf E[( 1-f_{n,0} (s)) e^{-S_n}; M_n<0 ]$ and another application of Lemma \ref{limitEminus} gives altogether
\[ \mathbf E[ Y_n ( 1-s^{Z_n}) e^{-S_n}; \tau_n=n] \sim \ell\, \mathbf E^- [U_\infty (s)]\mathbf P(\tau_n=n)\ . \]

In view of $s^z1_{z>0}= (1-0^z)-(1-s^z)$ this implies
\begin{align}
\mathbf E \big[Y_n s^{Z_n} e^{-S_n}; Z_n>0,\tau_n=n \big] \sim \ell\, h(s)\mathbf P(\tau_n=n) \label{continuity}
\end{align}
with $h(s)= \mathbf E^- [U_\infty(0)-U_\infty(s)]$. 

Now we show that $h(1)=\mathbf E^-[U_\infty(0)]>0$. This follows from  an estimate due to Agresti (see \cite{ag} and the proof of Proposition 3.1 in \cite{agkv}), which in our case reads
\[ (1-f_{k,0}(s))e^{-S_k} \ge \Big( \frac 1{1-s} + \sum_{i=1}^k \eta_ie^{S_i}\Big)^{-1} \ . \]
Letting $k\to \infty$ Lemma \ref{le2} implies $U_\infty(s) >0$ $\mathbf E^-$-a.s. and thus $h(s)>0$ for all $s<1$. For $s=0$ it follows that $h(1)=\mathbf E^-[U_\infty(0)]>0$. 

Also from  $\mathbf E [   Z_n e^{-S_n};  \tau_n=n ]= \mathbf E [  \mathbf E[Z_n\mid \Pi] e^{-S_n};  \tau_n=n ]=\mathbf P(\tau_n=n)$
and from $1-s^z \le z(1-s)$ we get
\begin{align*} \mathbf E[U_n(s)&; M_n < 0] =\mathbf E \big[(1-s^{Z_n}) e^{-S_n} ;  \tau_n=n \big]  \\ &\le (1-s) \mathbf E \big[Z_n e^{-S_n}; \tau_n=n \big]  = (1-s) \mathbf P(\tau_n=n)
\end{align*}
which for $n \to \infty$ implies $h(1)-h(s)=\mathbf E^-[U_\infty(s)] \le 1-s$. Therefore $h$ is continuous at $s=1$.
Our claim follows now from \eqref{continuity} and the continuity theorem for generating functions. 
\end{proof}

\begin{lemma} \label{le3}
Let $Y_n$ fulfil the same conditions as in Lemma \ref{le4}. Then under Assumptions A1 to A3 there is  a non-vanishing finite measure $p''$ on $\mathbb N\times \mathbb N_0$  such that for every bounded $\psi:\mathbb N \times \mathbb N_0 \to \mathbb R$
\[ \frac{\mathbb E[ Y_n \psi(Z_n,n-\tau_n);Z_n>0]}{\gamma^n \mathbf P(\tau_n=n)} \to   \ell \int \psi \, dp'' \ . \]
\end{lemma}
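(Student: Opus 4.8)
After standard reductions — treating the positive and negative parts of $\psi$ separately, and replacing $Y_n$ by $(Y_n+K)/(2K)$, using also the special case $Y_n\equiv 1$ (which carries $\ell=1$) — one may assume $\psi\ge 0$ and $0\le Y_n\le 1$. By the change of measure it then suffices to prove
\[ \frac{\mathbf E[Y_n\,\psi(Z_n,n-\tau_n)\,e^{-S_n};Z_n>0]}{\mathbf P(\tau_n=n)}\ \longrightarrow\ \ell\int\psi\,dp''\ .\]
I would decompose the numerator according to the value $m=n-\tau_n$ and use the identity $\{\tau_n=n-m\}=\{\tau_{n-m}=n-m\}\cap\{L_{n-m,n}\ge 0\}$, where $L_{n-m,n}=\min_{1\le i\le m}(S_{n-m+i}-S_{n-m})$. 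Under $\mathbf P$ the block $(Q_{n-m+1},\dots,Q_n)$ is independent of $\mathcal F_{n-m}$ and has the same law as $(Q_1,\dots,Q_m)$; since $Y_n$ is $\mathcal F_{n-m}$-measurable once $n$ is large, the Markov and branching properties give
\[ \mathbf E[Y_n\psi(Z_n,m)e^{-S_n};Z_n>0,\tau_n=n-m]=\mathbf E\big[Y_n\,\Phi_m(Z_{n-m})\,e^{-S_{n-m}};\tau_{n-m}=n-m\big]\ ,\]
where $\Phi_m(z)=\mathbf E[\psi(\hat Z_m,m)\,e^{-\hat S_m}\,1_{\hat Z_m>0}\,1_{\hat L_m\ge 0}\mid\hat Z_0=z]$, with $\hat Z$ a branching process run for $m$ further generations in a fresh environment, associated walk $\hat S$, and $\hat L_m=\min(\hat S_1,\dots,\hat S_m)$. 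The crucial feature of this representation is that $\Phi_m$ is a \emph{bounded} function of $z$ with $\Phi_m(0)=0$: on $\{\hat L_m\ge 0\}$ one has $\hat S_m\ge 0$, hence $e^{-\hat S_m}1_{\hat L_m\ge 0}\le 1$ and $|\Phi_m(z)|\le\|\psi\|_\infty$.

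For each fixed $m$ I would then apply Lemma \ref{le4} ``at time $n-m$''. Viewed as a function of $(Q_1,\dots,Q_{n-m})$, the $Y_n$ are of the form required there, with the same limit $\ell$ — the hypothesis $\mathbf E[Y_n\mid\tau_{(n-m)-m'}=(n-m)-m']\to\ell$ being exactly the present hypothesis with $m+m'$ in place of $m$ — and $\Phi_m$ is bounded with $\Phi_m(0)=0$. Since moreover $\mathbf P(\tau_{n-m}=n-m)\sim\mathbf P(\tau_n=n)$ (by Lemma \ref{maxim}, both being $\sim v(0)b_k^{-1}$ with $b_{n-m}\sim b_n$), this yields for every fixed $m$
\[ \frac{\mathbf E[Y_n\psi(Z_n,m)e^{-S_n};Z_n>0,\tau_n=n-m]}{\mathbf P(\tau_n=n)}\ \longrightarrow\ \ell\int\Phi_m\,dp'\qquad(n\to\infty)\ .\]
Expanding $\int\Phi_m\,dp'=\sum_{j\ge 1}\psi(j,m)\,p''(\{j\}\times\{m\})$ with $p''(\{j\}\times\{m\}):=\sum_{z\ge 1}p'(z)\,\mathbf E[1_{\hat Z_m=j}\,e^{-\hat S_m}\,1_{\hat L_m\ge 0}\mid\hat Z_0=z]$ defines a measure $p''$ on $\mathbb N\times\mathbb N_0$ with $\ell\sum_m\int\Phi_m\,dp'=\ell\int\psi\,dp''$. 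Its total mass equals $\sum_m\int\Psi_m\,dp'$, where $\Psi_m$ is $\Phi_m$ for $\psi\equiv 1$; it is non-vanishing, since already the $m=0$ part has mass $\int\Psi_0\,dp'=p'(\mathbb N)>0$ (note $\Psi_0(z)=1_{z>0}$), and finite by the estimate below.

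The remaining, and I expect hardest, step is to interchange $\sum_m$ with $\lim_n$; this reduces to a uniform tail bound: for every $\varepsilon>0$ there is an $M$ with $\sum_{m>M}\mathbf E[e^{-S_n}1_{Z_n>0};\tau_n=n-m]\le\varepsilon\,\mathbf P(\tau_n=n)$ for all large $n$. (Because $0\le Y_n\le 1$ and $0\le\psi\le\|\psi\|_\infty$, bounding the tail of the $Y_n\equiv\psi\equiv 1$ series suffices, after which a standard dominated-convergence argument for series applies.) By the identity above, the $m$-th summand equals $\mathbf E[e^{-S_{n-m}}1_{\tau_{n-m}=n-m}\Psi_m(Z_{n-m})]$, and since $\Psi_m(z)\le\mathbf E[e^{-\hat S_m}1_{\hat L_m\ge 0}]=:q_m$ for $z\ge 1$ with $\Psi_m(0)=0$, while $1_{Z_{n-m}>0}\le Z_{n-m}$ and $\mathbf E[Z_{n-m}\mid\Pi]=e^{S_{n-m}}$ give $\mathbf E[e^{-S_{n-m}}1_{Z_{n-m}>0};\tau_{n-m}=n-m]\le\mathbf P(\tau_{n-m}=n-m)$, that summand is $\le q_m\,\mathbf P(\tau_{n-m}=n-m)$. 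By Lemma \ref{pro1} (with $r=1$) one has $q_m\sim\kappa\,m^{-1}a_m^{-1}$, so $\sum_m q_m<\infty$; one then controls $\sum_{m>M}q_m\,\mathbf P(\tau_{n-m}=n-m)$ by splitting the range of $m$: for $m\le n/2$, regular variation of $(b_k)$ gives $\mathbf P(\tau_{n-m}=n-m)\le C\,\mathbf P(\tau_n=n)$, so this part is $\le C\,\mathbf P(\tau_n=n)\sum_{m>M}q_m$; for $n/2<m\le n-K_0$, monotonicity of $\mathbf P(\tau_k=k)$ gives $\mathbf P(\tau_{n-m}=n-m)\le\mathbf P(\tau_{K_0}=K_0)$, which is combined with $\sum_{m>n/2}q_m=O(\mathbf P(\tau_n=n))$ (again from Lemmas \ref{pro1} and \ref{maxim}, the case $\alpha=2$ being critical); and the $O(K_0)$ terms with $n-K_0<m\le n$ are negligible because there $q_m\lesssim m^{-1}a_m^{-1}=o(\mathbf P(\tau_n=n))$. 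Choosing first $M$, then $K_0$, then $n$ large makes the total tail $\le\varepsilon\,\mathbf P(\tau_n=n)$. This balancing of the summability of $q_m$ against the growth of $\mathbf P(\tau_{n-m}=n-m)$ as $m$ approaches $n$ is the one delicate point; once it is settled, the fixed-$m$ limits combine to $\ell\int\psi\,dp''$ (and, taking $\psi\equiv Y_n\equiv 1$, to a proof of Theorem \ref{survival} with $\theta=\|p''\|$).
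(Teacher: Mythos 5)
Your proof is correct and follows essentially the same route as the paper: decompose by $m=n-\tau_n$, reduce the fixed-$m$ term to Lemma \ref{le4} applied at horizon $n-m$ with the induced kernel $\Phi_m(z)=\mathbf E[\psi(\hat Z_m,m)e^{-\hat S_m}1_{\hat Z_m>0}1_{\hat L_m\ge 0}\mid \hat Z_0=z]$ (which is the paper's $\psi_j$), and then justify the interchange of $\lim_n$ with $\sum_m$ via the tail bound $q_m\,\mathbf P(\tau_{n-m}=n-m)$ controlled by Lemmas \ref{pro1} and \ref{maxim} together with regular variation and monotonicity of $\mathbf P(\tau_k=k)$. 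The only difference is cosmetic: the paper organizes its splitting of the tail sum slightly differently (cutting $\sum_{i\le n-k}\mathbf P(\tau_i=i)\mathbf E[e^{-S_{n-i}};L_{n-i}\ge 0]$ at $i=n/2$ and comparing against $n/3$), but the estimates invoked and the final balancing argument are the same.
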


\begin{proof} We have for fixed $j \in \mathbb N_0$
\begin{align*}
\gamma^{-n} \mathbb E[  Y_n \psi(Z_n,n-\tau_n)&;Z_n>0, \tau_n=n-j] \\&= \mathbf E[Y_n \psi_j(Z_{n-j})e^{-S_{n-j}}; \tau_{n-j}=n-j]
\end{align*}
with $ \psi_j(z) =  \mathbf E[ \psi(Z_j,j)e^{-S_j};Z_j>0, L_j \ge 0 \mid Z_0=z]$ for $z >0$ and $\psi_j(0)=0$. Also there is a finite measure $p_j'$ such that $\int \psi_j \, dp' = \int \psi(\, \cdot \, , j) \, dp_j'$. From the preceding lemma
\begin{align*}
\frac{\mathbb E[  Y_n \psi(Z_n,n-\tau_n);Z_n>0, \tau_n=n-j] }{\gamma^n \mathbf P(\tau_n=n)} \to \ell \int \psi(\, \cdot \, , j) \, dp_j' \ . 
\end{align*}
In particular $p_0'$ is non-vanishing.
Thus it remains to show that for given $\varepsilon >0$ there is a natural number $k$ such that
\[ \gamma^{-n} \mathbb E[  Y_n \psi(Z_n,n-\tau_n) ;Z_n>0, \tau_n\le n-k] \le \varepsilon \mathbf P(\tau_n=n) \]
for large $n$. Without loss $0 \le Y_n \le 1$ and $0 \le \psi \le 1$. Then
\begin{align*}
\gamma^{-n} \mathbb E[  Y_n &\psi(Z_n,n-\tau_n) ;Z_n>0, \tau_n\le n-k] \le \mathbf E[    e^{-S_n};Z_n>0 ,\tau_n \le n-k] \\
&\le \sum_{i=0}^{n-k}  \mathbf E[ e^{-S_n};Z_i>0,\tau_i=i, L_{i,n}\ge 0] \\ 
&\le 
\sum_{i=0}^{n-k}  \mathbf E[  e^{S_i-S_n}; \tau_i=i, L_{i,n}\ge 0] 
= 
\sum_{i=0}^{n-k}  \mathbf P(    \tau_i=i) \mathbf E [e^{-S_{n-i}}; L_{n-i}\ge 0]\ .
\end{align*}
From  Lemmas \ref{pro1}, \ref{maxim} both $\mathbf P(   \tau_{n}=n)$ and $\mathbf E[e^{-S_{n}};L_{n} \ge 0]$ are regularly varying with negative indices. Therefore for large $n$
\begin{align*}
\gamma^{-n} \mathbb E[  Y_n &\psi(Z_n,n-\tau_n) ;Z_n>0, \tau_n\le n-k]\\ & \le \mathbf E [e^{-S_{  n/3  }}; L_{  n/3  }\ge 0]\sum_{i\le n/2}  \mathbf P(    \tau_i=i) \\ &\qquad\qquad \mbox{} + \mathbf P(    \tau_{  n/3  }=  n/3   )\sum_{k\le j \le n/2}   \mathbf E [e^{-S_{j}}; L_{j}\ge 0]\ .
\end{align*}
Also $ \mathbf E[e^{-S_n}; L_n \ge 0]= o(\tfrac 1n)$ and  $\sum_{i\le n}  \mathbf P(    \tau_i=i)= O(n \mathbf P(\tau_n=n))$  and $\sum_{j \ge 1}   \mathbf E [e^{-S_{j}}; L_{j}\ge 0]< \infty$. Therefore for every $\varepsilon > 0$ the right-hand side of the inequality above  is bounded by $\varepsilon \mathbf P(\tau_n=n)$, if $k$ is large enough. This gives the claim.
\end{proof}

\noindent
Choosing $Y_n=1$ and $\psi = 1_{\mathbb N\times \mathbb N_0}$, we obtain Theorem \ref{survival}. 

\begin{proof}[Proof of Theorem \ref{theoZ_n}] In view of Theorem \ref{survival}, the first part is a special case of Lemma \ref{le3} with $Y_n=1$ and $\psi(Z_n,n-\tau_n)=1-s^{Z_n}$. For the second part we use H\"older's inequality (with $1/p=\beta, 1/q=1-\beta$) and \eqref{gleichung1}
\begin{align*}
\gamma^{-n} \mathbb E [ Z_n^\beta] &=\mathbf E[\mathbf E[Z_n^\beta 1_{Z_n >0} \mid \Pi] e^{-S_n} ]\\
&\le \mathbf E[ \mathbf E[Z_n\mid \Pi]^\beta \mathbf P(Z_n>0 \mid\Pi)^{1-\beta}e^{-S_n}] \le \mathbf E[e^{(1-\beta)(L_n-S_n)}]\ .
\end{align*}
Again we decompose with $\tau_n$ and obtain
\begin{align*}
\gamma^{-n} \mathbb E [ Z_n^\beta] &\le \sum_{i=0}^n \mathbf E[e^{(1-\beta)(L_n-S_n)}; \tau_i=i, L_{i,n} \ge 0]\\ & = \sum_{i=0}^n \mathbf P(\tau_i=i) \mathbf E[e^{-(1-\beta)S_{n-i}}; L_{n-i} \ge 0] \ .
\end{align*}
As above we show by means of Lemma \ref{pro1} with $r=1-\beta$ that this quantity is of order $\mathbf P(\tau_n=n)$, and the claim follows.
\end{proof}
\begin{proof}[Proof of Theorem  \ref{limitlaw}]
Again the first part is a special case of Lemma \ref{le3}. Next
let $\varphi: D[0,1]\to \mathbb R$ be bounded and continuous. We apply Lemma \ref{le3} to $Y_n=\varphi( \tfrac 1{a_n} \bar S^{n})$, where $\bar S^n_t = S_{  nt   \wedge r_n}$  with  natural numbers $r_n \to \infty$. If $n-r_n=o(n)$, then it follows from Lemma \ref{funclimit} and standard arguments that $\mathbf E[Y_n \mid \tau_{n-m}=n-m] \to \mathbf E [\varphi(L^*)]$. Lemma \ref{le3} yields 
\[ \mathbb E [ \varphi( \tfrac 1{a_n} \bar S^{n}) \mid Z_n >0] \to \mathbf E [\varphi(L^*)] \ . \]
Thus $(\tfrac 1{a_n} \bar S^{n} \mid Z_n >0) \stackrel{d}{\to} L^*$. Also conditional asymptotic independence follows from Lemma \ref{le3}. Finally for fixed $r$
\begin{align*}
 \mathbb P(&|X_{n-r+1}|+ \cdots + |X_n| \ge \sqrt{a_n} ; Z_n > 0) \\ &\le  \mathbb P(Z_{n-r} > 0) \mathbb P(|X_{1}|+ \cdots + |X_r| \ge \sqrt{a_n} ) =o(\mathbb P(Z_n>0))\ .
 \end{align*}
This holds true also, if $r=r_n\to \infty$ sufficiently slow. It follows
\[ \gamma^{-n} \mathbb P( \tfrac 1{a_n} \sup|S^n-\bar S^n| \ge \varepsilon \mid Z_n>0) \to 0 \]
for all $\varepsilon >0$. Therefore $( \tfrac 1{a_n} (S^n-\bar S^n) \mid Z_n >0) \stackrel{d}{\to} 0$ in $D[0,1]$ and consequently $(\tfrac 1{a_n} S^{n} \mid Z_n >0) \stackrel{d}{\to} L^*$. This finishes the proof.
\end{proof}

\section{Trees with stem}

For every $n=0,1,\ldots, \infty$ let $\mathcal T_n$ be the set of all ordered rooted trees of height exactly $n$. For a precise definition we refer to the coding of ordered trees and their nodes given by Neveu \cite{ne}. Then $\mathcal T_{\ge n}=\mathcal T_n\cup \mathcal T_{n+1} \cup \cdots \cup \mathcal T_\infty$ is the set of ordered rooted trees of  at least height $n$. With $[\ ]_{n}: \ \mathcal T_{\ge n}\to \mathcal T_n$ we denote the operation of pruning a tree $t \in \mathcal T_{\ge n}$ to a tree $[t]_n\in \mathcal T_n$ of height exactly $n$ by eliminating all nodes of larger height.

For $n=0,1,\ldots, \infty$ a tree with a stem of height $n$, shortly a {\em trest} of height $n$, is a pair  
\[ \mathsf{t} = (t, k_0 k_1\ldots k_n) \ , \]
where $t \in \mathcal T_{\ge n}$ and $k_0, \ldots,k_n$ are nodes in $t$ such that $k_0$ is the root (founding ancestor) and $k_{i}$ is an offspring of $k_{i-1}$. Thus $k_i$ belongs to generation $i$. We call $k_0 \ldots k_n$ the stem within $\mathsf t$,  it is determined by $k_n$. $\mathcal T_n'$ denotes the set of all trests of height $n$.

A trest $\mathsf{t} = (t, k_0 k_1\ldots k_n) $ of height $n$ can also be pruned at height $m \le n$ to obtain 
the trest of height $m$
\[ [\mathsf{t}]_m = ([t]_m,k_0 \ldots k_m) \ .\]

To every tree $t \in \mathcal T_{\ge n}$ there belongs a unique trest
\[ \langle t \rangle_n = ([t]_n, k_0(t) \ldots k_n(t))  \]
of height $n$, where $k_0(t) \ldots k_n(t)$ is the {\em leftmost} stem, which can be fitted into $[t]_n$. Notice that this stem is uniquely determined, since $t$ is ordered and of at least height $n$.\\\\
Now let $\pi= (q_1,q_2, \ldots)$ be a fixed environment. Define the distribution $\tilde q_i$ by its weights
\[ \tilde q_i(y) = \tfrac 1{m(q_i)} yq_i(y) \ , \quad y=0,1,\ldots   \]
Then a corresponding {\em LPP-trest} (Lyons-Pemantle-Peres trest) is the random trest $\tilde{\mathsf T}= (\tilde T, \tilde K_0\tilde K_1 \ldots)$ with values in $\mathcal T_\infty'$ satisfying the following properties:\\
Given $\Pi= (q_1,q_2, \ldots)$  a.s. 
\begin{itemize}
\item
the offspring numbers of all individuals are independent random variables,
\item
the offspring number of $\tilde K_{i-1}$ has distribution $\tilde q_{i}$ and the offspring number of any other individual in generation $i-1$ has distribution $q_{i}$, and
\item 
the node $\tilde K_{i}$ is uniformly distributed among all children of $\tilde K_{i-1}$, given the offspring number of $\tilde K_{i-1}$ and given all other random quantities.
\end{itemize}
Shortly speaking: From the infinite stem individuals grow  according to a size biased distribution, and from the other individuals ordinary branching trees arise to the right and left of the stem. Such type of trests have been considered by Lyons, Peres and Pemantle \cite{lpp} in the Galton-Watson case.

Let $\tilde Z_n$ be the  population size of the LPP-trest in generation $n$. 

\begin{lemma}\label{le41}
Under Assumptions A1 to A3
\[ e^{-S_n} \tilde Z_n \to W^+  \qquad \mathbf P^+\text{-a.s.} \]
with some random variable $W^+$ fulfilling $W^+ >0$ $\mathbf P^+$-a.s.
\end{lemma}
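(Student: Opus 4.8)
The plan is to exhibit $e^{-S_n}\tilde Z_n$ as a nonnegative martingale under $\mathbf P^+$ and then to identify its almost sure limit and argue positivity of that limit. First I would decompose the population of the LPP-trest in generation $n$ according to the stem: write $\tilde Z_n = 1 + \sum_{i=1}^{n} R_{i,n}$, where $R_{i,n}$ is the total number of descendants in generation $n$ of the individuals branching off the stem at generation $i$ (that is, the children of $\tilde K_{i-1}$ other than $\tilde K_i$, together with their progeny). Conditionally on the environment $\Pi$ and on the stem data up to generation $i$, each such side-tree is an ordinary branching process in the shifted environment $(q_{i+1},q_{i+2},\ldots)$, so its expected size in generation $n$ is $e^{S_n-S_i}$. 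Since the offspring number of $\tilde K_{i-1}$ has the size-biased law $\tilde q_i$, the expected number of side-children at generation $i$ equals $m(q_i)^{-1}\sum_y y(y-1)q_i(y) = m(q_i)\,\eta_i$ — note $m(q_i)=e^{X_i}$. Hence
\[
\mathbf E\big[ e^{-S_n}\tilde Z_n \,\big|\, \mathcal F_n \big] \ = \ e^{-S_n}\Big( 1 + \sum_{i=1}^{n} e^{X_i}\eta_i\, e^{S_n-S_i}\Big) \ = \ e^{-S_n} + \sum_{i=1}^{n}\eta_i\, e^{-S_{i-1}}\ .
\]
Combining this with the fact (Lemma 2.3 in \cite{gkv}, used already in the proof of Lemma \ref{le4}) that $U_n(s)=(1-f_{n,0}(s))e^{-S_n}$ is decreasing, one checks by a similar generating-function computation that the process $(e^{-S_n}\tilde Z_n)_{n\ge 0}$, or rather its conditional expectation given $\mathcal F_\infty$, is dominated in a way that makes $M_n := e^{-S_n}\tilde Z_n$ a nonnegative $\mathbf P^+$-supermartingale (after the Doob $h$-transform the walk stays in $[0,\infty)$, so all the $e^{-S_{i-1}}$ terms are bounded by $1$). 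Then $M_n$ converges $\mathbf P^+$-a.s. to a finite limit $W^+ \ge 0$.

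Next I would establish $W^+>0$ $\mathbf P^+$-a.s. The natural route is a second-moment / conditional Borel–Cantelli argument along the stem: under $\mathbf P^+$ the walk satisfies $S_k\to+\infty$, in fact $L_k\ge 0$, so the side-trees emanating at generations where the walk is low contribute a non-negligible positive amount. Concretely, I would follow the spine-decomposition approach of Lyons–Pemantle–Peres \cite{lpp}: compute $\mathbf E^+[W^+]$ and $\mathbf E^+[(W^+)^2]$ — or truncate and use a uniform-integrability argument — using Assumption A3 via Lemma \ref{le2}, which gives $\sum_{k\ge 1}\eta_k e^{-S_{k-1}} < \infty$ $\mathbf P^+$-a.s. and hence controls the variance increments. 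Alternatively, and perhaps more cleanly, one shows $\{W^+=0\}$ is a tail event with probability $0$ or $1$, and rules out probability $1$ by noting $\mathbf E^+[W^+]>0$; the latter follows because $\mathbf E^+[M_n] = \mathbf E^+[e^{-S_n}] + \mathbf E^+[\sum_{i=1}^n \eta_i e^{-S_{i-1}}]$ stays bounded below by a positive constant (the $i=1$ term $\mathbf E^+[\eta_1]>0$, since $\mathbb P(Q(0)=1)=0$ forces $\eta_1>0$ with positive probability), together with uniform integrability of $(M_n)$ obtained from the $L^2$ bound.

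The main obstacle is the positivity statement $W^+>0$ a.s., not the a.s.\ convergence. The delicate point is that under $\mathbf P^+$ the walk $S$ may make long excursions to large values, during which the side-trees essentially die out ($e^{-S_i}$ tiny), so one must show that the low points of $S$ recur often enough — quantitatively, that $\sum_i \eta_i e^{-S_{i-1}}$ has a strictly positive contribution that cannot be entirely cancelled — to keep the martingale away from zero. I expect this to require a careful conditional second-moment estimate on the side-tree sizes, controlling $\mathbf E^+[(\text{side-tree at }i)^2\mid \mathcal F_\infty]$ by $\zeta$-type quantities and invoking Assumption A3 exactly as in the proof of Proposition 3.1 of \cite{agkv}; the Agresti-type lower bound $(1-f_{k,0}(s))e^{-S_k}\ge (\tfrac1{1-s}+\sum_{i=1}^k\eta_i e^{S_i})^{-1}$ used in Lemma \ref{le4} has an analogue here that, combined with Lemma \ref{le2}, forces the limit to be strictly positive. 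Everything else — the martingale identity, the shift/branching-property bookkeeping, passing to the Doob transform — is routine given the machinery already assembled in Sections 2 and 3.
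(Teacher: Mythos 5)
Your decomposition $\tilde Z_n = 1 + \sum_i R_{i,n}$ and the computation of the conditional mean match the paper exactly, but the martingale structure you then invoke is wrong, and this is where the argument breaks. The process $M_n = e^{-S_n}\tilde Z_n$ is \emph{not} a $\mathbf P^+$-supermartingale: from your own computation, given the environment and the tree up to generation $n$, the conditional expectation of $M_{n+1}$ equals $M_n + e^{-S_n}\eta_{n+1} + (e^{-S_{n+1}} - e^{-S_n})$, and the drift term $e^{-S_n}\eta_{n+1}\ge 0$ coming from the spine's side-offspring pushes the process \emph{up}, not down. The fact that $e^{-S_{i-1}}\le 1$ under $\mathbf P^+$ bounds the increments but does not make them nonpositive. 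So a.s.\ convergence cannot be read off a supermartingale convergence theorem. What is true --- and this is the paper's route --- is that for each fixed $i$, given $\Pi$, the normalized side-tree $e^{-S_n}\tilde Z_n^i$ is a nonnegative martingale in $n$ and hence converges to some $W^i$; the whole process is then controlled by showing that the tail $\sum_{i\ge k}e^{-S_n}\tilde Z_n^i$ is uniformly small in $n$ via Doob's maximal inequality for the submartingale $e^{-S_n}\sum_{i\ge k}\tilde Z_n^i$, combined with Lemma \ref{le2} and Lemma \ref{limitEminus}. This term-by-term argument, not a global (super)martingale property, is what gives $M_n\to W^+$ a.s.

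On positivity, your primary route (compute $\mathbf E^+[W^+]$ and $\mathbf E^+[(W^+)^2]$ and deduce uniform integrability) is unlikely to go through under the stated hypotheses: Assumption A3 only controls a \emph{logarithmic} moment of the truncated second moment $\zeta$, which is far from what is needed to bound $\mathbf E^+[(W^+)^2]$. The paper avoids any second-moment computation. It exploits the structure of the spine decomposition directly: given $\Pi$, the limits $W^i$ arising from distinct side-trees are \emph{independent}, and $W^+\ge\sum_i W^i$ by Fatou, so by the conditional second Borel--Cantelli lemma it suffices to show $\sum_i \mathbf P^+(W^i>0\mid\Pi)=\infty$ a.s. The Agresti lower bound you allude to gives $\mathbf P^+(W^i>0\mid\Pi)\ge\bigl(\sum_{j\ge i}\eta_{j+1}e^{-(S_j-S_i)}\bigr)^{-1}$, which is a.s.\ positive by Lemma \ref{le2}; but positivity of each term is not enough --- one must show the series of survival probabilities diverges. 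This is the genuinely nontrivial part that your sketch does not address: the paper achieves it by exhibiting a stationary subsequence via Tanaka's path decomposition of the conditioned walk and then invoking Birkhoff's ergodic theorem. Without that step (or some equivalent way to rule out the lower bounds decaying summably), the proof of $W^+>0$ a.s.\ is incomplete.
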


\begin{proof}
We use the representation
\[ \tilde Z_n = 1 + \sum_{i=0}^{n-1} \tilde Z_n^i \]
where $\tilde Z^i_n$ is the number of individuals in generation $n$ other than $\tilde K_{n}$, which descent from $\tilde K_i$ but not from $\tilde K_{i+1}$. Thus
$\mathbf E[ \tilde Z^i_{i+1} \mid \Pi] = \sum_y y\tilde Q_{i+1} (y) -1 = e^{X_{i+1}} \eta_{i+1} $ and a.s.
\begin{align} \mathbf E[ \tilde Z^i_{n} \mid \Pi] = e^{S_n-S_{i+1}} \mathbf E[ \tilde Z^i_{i+1} \mid \Pi]=  \eta_{i+1} e^{S_n-S_i} \ . \label{Ztilde}
\end{align}

Now given the environment
$ e^{-S_n} \sum_{i=k}^{n-1}\tilde Z_n^i $
is for $n>k$ a non-negative submartingale. Therefore Doob's inequality implies
that for every $\varepsilon \in (0,1)$
\[ \mathbf P\Big( \max_{k<m\le n} e^{-S_m} \sum_{i=k}^{m-1} \tilde Z_m^i \ge \varepsilon\ \Big|\ \Pi\Big) \le \frac 1\varepsilon    \sum_{i=k}^{n-1}  e^{-S_n}\mathbf E[\tilde Z_n^i \mid \Pi] \le \frac 1\varepsilon    \sum_{i\ge k} \mathbf \eta_{i+1} e^{-S_i} \]
and
\[ \mathbf P^+\Big(\sup_{m>k} e^{-S_m} \sum_{i=k }^{m-1} \tilde Z_m^i \ge \varepsilon \Big) \le \frac 1\varepsilon \mathbf E^+ \Big[1\wedge \sum_{i\ge k} \mathbf \eta_{i+1} e^{-S_i} \Big] \ .\]
From Lemmas \ref{limitEminus} and \ref{le2} it follows that
\[ \mathbf P^+\Big(\sup_{m>k} e^{-S_m} \sum_{i=k }^{m-1} \tilde Z_m^i \ge \varepsilon \Big) \le \varepsilon\ , \]
if $k$ is chosen large enough.
Also $e^{-S_n} \tilde Z_n^i$ is for $n \ge i+1$ and a fixed environment a non-negative martingale, such that for $n \to \infty$ 
\[ e^{-S_n} \tilde Z_n^i \to W^i \qquad \mathbf P^+\text{-a.s.}\]
These facts together with $ S_n \to \infty$ $\mathbf P^+$-a.s. imply that
\[ e^{-S_n} \tilde Z_n \to W^+ \qquad \mathbf P^+\text{-a.s.} \]
for some random variable $W^+$. Also $W^+ \ge \sum_{i\ge 0} W^i$ $\mathbf P^+$-a.s. 

Thus it remains to show that $\sum_{i\ge 0} W^i > 0 $ $\mathbf P^+$-a.s. Given $\Pi$, the random variables $W^i$ are independent, since they arise from independent branching processes in the LPP-trest. In view of the second Borel-Cantelli Lemma it is thus sufficient to prove
\[ \sum_{i\ge 0} \mathbf P^+ (W^i >0 \mid \Pi) = \infty \qquad \mathbf P^+\text{-a.s.} \]
Now we use the formula
\[ \mathbf P^+ (W^i >0 \mid \Pi) \ge \Big( \sum_{j=i}^\infty \eta_{j+1} e^{-(S_j-S_i)} \Big)^{-1} \ , \]
which is taken from the proof of Proposition 3.1 in \cite{agkv} (a few lines after (3.7) therein). Because of Lemma \ref{le2} above the right-hand side is strictly positive $\mathbf P^+$-a.s.  Moreover there are random times $0= \nu(0)<\nu(1) < \cdots$ such that 
\[ \Big( \sum_{j=\nu(k)}^\infty \eta_{j+1} e^{-(S_j-S_{\nu(k)})} \Big)^{-1} \ , \quad k=0,1, \ldots \]
is a stationary sequence of random variables, which is a consequence of Tanaka's decomposition, see \cite{ta} and Lemma 2.6 in \cite{agkv}. From Birkhoff's ergodic theorem it follows that
\[\frac 1n \sum_{k=1}^n \Big( \sum_{j=\nu(k)}^\infty \eta_{j+1} e^{-(S_j-S_{\nu(k)})} \Big)^{-1}  \]
has a strictly positive limit $\mathbf P^+$-a.s. This implies our claim.
\end{proof}

\noindent
We use the LPP-tree to approximate conditioned BPRE. Let
us denote by $T$  a branching tree in random environment $\Pi$.  This is nothing else than the ordered rooted tree belonging to a BPRE in environment $\Pi$. Again let $Z_n$ denote its number of individuals in generation $n$.

\begin{theorem} \label{trest} 
Assume A1 to A3. Let $0 \le r_n< n $ be a  sequence of natural numbers with $r_n \to \infty$. Let $Y_n$ be uniformly bounded random variables of the form $Y_n=\varphi(Q_1,\ldots,Q_{n-r_n})$  and let $B_n \subset \mathcal T_{n-r_n}'$, $n \ge 1$. If for some $\ell \ge 0$
\[ \mathbf E\big[Y_n; [\tilde {\mathsf T}]_{n-r_n} \in B_n \ \big| \ \tau_{n-m}= n-m\big] \to \ell \]
for  all $m \ge 0$, then
\[ \mathbb E\big[Y_n;[\langle T\rangle_n]_{n-r_n} \in B_n \ \big| \ Z_n > 0\big] \to \ell  \ . \]
$B_n$ may be random, depending only on the environment $\Pi$.
\end{theorem}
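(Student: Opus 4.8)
}
The plan is to follow the route of Section 3: replace the conditioning event $Z_n>0$ by the events $\{\tau_n=n-j\}$, discard the contribution of large $j$, and treat each fixed $j$ by a spinal (many-to-one) identity that brings in the LPP-trest. By Theorem \ref{survival} it is enough to prove that
\[ \gamma^{-n}\,\mathbb E\big[Y_n;\ [\langle T\rangle_n]_{n-r_n}\in B_n,\ Z_n>0\big]\ \sim\ \theta\,\ell\,\mathbf P(\tau_n=n)\ . \]
Since the law of the tree given the environment is the same under $\mathbb P$ and under $\mathbf P$, and $e^{-S_n}$ is $\Pi$-measurable, one has $\gamma^{-n}\mathbb E[\Phi]=\mathbf E[\Phi\,e^{-S_n}]$ for every functional $\Phi$ of the tree up to height $n$; writing $m=n-r_n$, the left-hand side thus equals $\mathbf E[Y_n\,1_{[\langle T\rangle_n]_m\in B_n}\,1_{Z_n>0}\,e^{-S_n}]$, and I would split this according to the value of $\tau_n=n-j$, $j\ge0$.

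I expect the error term $\{\tau_n\le n-k\}$ to be handled exactly as in the proof of Lemma \ref{le3}: bounding $1_{[\langle T\rangle_n]_m\in B_n}\le1$ and using that on $\{\tau_n=i\}$ the event $Z_n>0$ forces $Z_i>0$, so that the first moment estimate $\mathbf E[1_{Z_i>0}\mid\Pi]\le e^{S_i}$ may be applied at the bottleneck, one obtains
\[ \mathbf E\big[1_{Z_n>0}e^{-S_n};\ \tau_n\le n-k\big]\ \le\ \sum_{i=0}^{n-k}\mathbf P(\tau_i=i)\,\mathbf E\big[e^{-S_{n-i}};L_{n-i}\ge0\big]\ \le\ \varepsilon\,\mathbf P(\tau_n=n) \]
for $k$ large, by Lemmas \ref{pro1} and \ref{maxim}. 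If $B_n$ depends only on $\Pi$ it is simply carried inside the conditioning on $\Pi$ throughout and nothing changes.

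For each fixed $j$ I would use a spinal decomposition identifying the conditioned branching tree with the LPP-trest, in the spirit of Geiger \cite{ge_99} and Lyons, Pemantle and Peres \cite{lpp}. The basic tool is the many-to-one identity: conditioning on $\Pi$, using $\mathbf E[Z_n\mid\Pi]=e^{S_n}$ and the size-biasing that defines $\tilde{\mathsf T}$, one gets, for bounded $\Psi$ on trests of height $m$,
\[ \mathbf E\Big[Y_n\,e^{-S_n}\!\!\sum_{v\in\text{gen }n}\!\!\Psi\big([T]_m, v_0\ldots v_m\big);\ \tau_n=n-j\Big]\ =\ \mathbf E\big[Y_n\,\Psi([\tilde{\mathsf T}]_m);\ \tau_n=n-j\big]\ , \]
$v_0\ldots v_m$ denoting the ancestral line of $v$. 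The stem of $[\langle T\rangle_n]_m$ is instead the ancestral line of the \emph{leftmost} generation-$m$ individual with a descendant in generation $n$; since, given $[T]_m$ and $\Pi$, the $Z_m$ individuals of generation $m$ have independent such descendants, each with the same probability $p_n=1-f_{m,n}(0)$, that individual is selected by an explicit geometric-in-position rule. Consequently $\mathbf E[Y_n\,1_{[\langle T\rangle_n]_m\in B_n}\,1_{Z_n>0}\,e^{-S_n};\tau_n=n-j]$ differs from the left-hand side above (with $\Psi=1_{\cdot\in B_n}$) only through the weight, which is $e^{S_m-S_n}p_n$ times a geometric factor. In the dual picture $e^{S_m-S_n}p_n$ is of the form $(1-f_{r_n,0}(0))e^{-S_{r_n}}=U_{r_n}(0)$ in the notation of the proof of Lemma \ref{le4}, the event $\{\tau_n=n-j\}$ places the dual walk in the $\mathbf P^-$-regime, and $p_nZ_m\to0$ in probability under the conditioned law (its conditional mean is $e^{S_m}p_n\le e^{S_n}\to0$ on the bottleneck event), so the geometric factor is asymptotically irrelevant. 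By the argument of Lemma \ref{le4} — monotonicity of $U_{r_n}(0)$, positivity of its limit from Lemmas \ref{le2} and \ref{le41}, and Lemma \ref{limitEminus} — the $j$-th term should be $\sim c_j\,\ell\,\mathbf P(\tau_n=n)$ with strictly positive $c_j$. Summing over $j$ and comparing with Theorem \ref{survival} forces $\sum_j c_j=\theta$, while the hypothesis provides $\ell$ for each $j$; this would give the theorem.

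The hard part will be this last step: making rigorous that, on the bottleneck event, the leftmost-surviving-line selection rule is asymptotically equivalent — up to the explicit constant $c_j$ — to the uniform stem of the LPP-trest, together with the fact that the size-biasing of the offspring law along that stem converges. This convergence holds only after the intermediately subcritical change of measure, where $U_n(0)=(1-f_{n,0}(0))e^{-S_n}$ and $e^{-S_n}\tilde Z_n$ have strictly positive limits (Lemmas \ref{le4}, \ref{le41}); without it everything degenerates, which is exactly why the statement is special to the present regime. Once these $\mathbf P^\pm$-almost sure limits are established, Lemma \ref{limitEminus} converts them into the asserted limit just as in the proof of Lemma \ref{le4}.
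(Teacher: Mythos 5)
Your plan is genuinely different from the paper's. The paper does not use a many--to--one identity at all; instead it invokes Geiger's exact description (Theorem \ref{geigerconstruction}) of the conditional law of $\langle T\rangle_n$ given $\Pi,\,Z_n>0$ as the law of an explicit trest $\mathsf T_{n,\pi}$, then constructs a coupling of $\mathsf T_{n,\pi}$ with the LPP-trest $\tilde{\mathsf T}$, bounding the total variation distance of the pruned trests by $1\wedge\sum_{i\le n-r_n}\eta_i e^{S_{n-m}-S_{i-1}}$, a quantity that is made small under $\tau_{n-m}=n-m$ by duality together with Lemmas \ref{limitEminus} and \ref{le2}. After that, Lemma \ref{le3} is applied directly to the $\Pi$-measurable random variable $Y_n\,\mathbf P_\Pi([\langle T\rangle_n]_{n-r_n}\in B_n\mid Z_n>0)$. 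Your top-level structure (reduce via a change of measure, discard $\tau_n\le n-k$ using the estimates from Lemma \ref{le3}, analyse each fixed $j$) matches the spirit of Section 3, and the many-to-one identity you write is correct.

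However, there is a real gap at precisely the point you flag as ``the hard part.'' Your exact computation gives, for fixed $j$ and $m=n-r_n$,
\begin{align*}
\mathbf E\big[Y_n\,1_{[\langle T\rangle_n]_m\in B_n}1_{Z_n>0}\,e^{-S_n};\tau_n=n-j\big]
= \mathbf E\Big[Y_n\,e^{S_m-S_n}p_n\,1_{\tau_n=n-j}\,\mathbf E\big[1_{[\tilde{\mathsf T}]_m\in B_n}(1-p_n)^{i-1}\,\big|\,\Pi\big]\Big],
\end{align*}
with $p_n=1-f_{m,n}(0)$ and $i=i([\tilde{\mathsf T}]_m)$ the position of the stem node at generation $m$. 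To conclude, you must show that the weight $e^{S_m-S_n}p_n\,(1-p_n)^{i-1}$ factors out asymptotically as a constant $c_j>0$ \emph{independently of the choice of $Y_n$ and $B_n$}, and uniformly enough in $j$ to sum. This requires (i) a proof that $(1-p_n)^{i-1}\to 1$ under the conditioning (you only offer a mean bound on $p_n Z_m$ which is not obviously small, since $Z_m$ is large on the bottleneck event and $i$ is of order $\tilde Z_m$), (ii) a proof of asymptotic independence of $e^{S_m-S_n}p_n$ (a function of $Q_{m+1},\ldots,Q_n$) from the indicator $1_{[\tilde{\mathsf T}]_m\in B_n}$ (a function of $Q_1,\ldots,Q_m$ and the trest randomness) under $\{\tau_n=n-j\}$, which couples the two pieces of the environment because $m<n-j$, and (iii) positivity of the limit. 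None of these are carried out, and (ii) is genuinely delicate. The paper's route through Geiger's trest sidesteps all three issues: once the conditioned tree is realised exactly as $\mathsf T_{n,\pi}$, the comparison to $\tilde{\mathsf T}$ becomes a purely environment-measurable total variation estimate and the ``selection rule'' never has to be disentangled from the environment. If you want to push the many-to-one route, you should expect to essentially rediscover the Geiger decomposition while proving (i)--(iii); citing Lemma \ref{le41} for positivity of $U_\infty(0)$ is also a misattribution (that is Lemma \ref{le4}; Lemma \ref{le41} concerns $W^+$).
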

\noindent
For the proof we use the following theorem due to J. Geiger (see \cite{ge_99}). Let $\pi=(q_1,q_2,\ldots)$ be a fixed environment, let $\mathbf P_\pi(\cdot)$ be the corresponding probabilities and let 
\[ \mathsf T_{n,\pi}= (T_n, K_0 \ldots K_n) \]
denote a random trest of height $n$ and let for $i=1, \ldots, n$
\begin{align*} 
T_i' &= \text{subtree within } T_n \text{ right to the stem with root } K_{i-1} \ , \\ 
T_i'' &= \text{subtree within } T_n \text{ left to the stem with root } K_{i-1} \ , \\
R_i &= \text{size of the first generation of } T_i' \ ,  \\
L_i &= \text{size of the first generation of } T_i''   \ .
\end{align*}
For $\mathsf T_{n,\pi}$ the following properties are required:
\begin{itemize}
\item
$\mathbf P_\pi ( R_i=r, L_i=l) = q_i(r+l+1) \frac{\mathbf P_\pi(Z_n >0 \ | Z_i=1) \mathbf P_\pi(Z_n=0 | Z_i=1 )^{l}}{\mathbf P_\pi(Z_n >0 \ | Z_{i-1}=1)}$.
\item
 $T_i'$, if decomposed at its first generation, consists of $R_i$ subtrees $\tau_{ij}'$, $j=1, \ldots, R_i$, which are branching trees within the fixed environment $(q_{i+1},q_{i+2}, \ldots)$.
\item
Similarly  $T_i''$ consists of $L_i$ subtrees $\tau_{ij}''$, which are branching trees within the fixed environment $(q_{i+1},q_{i+2}, \ldots)$  conditioned to be  extinct before generation $n-i$.
\item
All pairs $(R_i,L_i)$  and all subtrees $\tau_{ij}'$, $\tau_{ij}''$  are independent.
\end{itemize}
These properties determine the distribution of $\mathsf T_{n,\pi}$ up to possible offspring of $K_n$ and thus the distribution of $\langle \mathsf T_{n, \pi}\rangle_n$. 
\begin{theorem} \label{geigerconstruction} 
For almost all $\pi$ the conditional distribution of $\langle T\rangle_n$ given  $\Pi=\pi,Z_n>0$ is equal to the distribution of $\langle \mathsf T_{n, \pi}\rangle_n$.
\end{theorem}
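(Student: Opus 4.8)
The plan is to derive this as a direct consequence of Geiger's decomposition of a Galton--Watson tree conditioned on survival, applied conditionally on the environment. Since we work under a fixed environment $\pi=(q_1,q_2,\ldots)$, the process $Z_0,Z_1,\ldots$ is an inhomogeneous Galton--Watson process, and the generating-function machinery of \cite{ge_99} goes through verbatim once one tracks the dependence on the generation index. First I would recall the backward construction: reading the tree $T$ conditioned on $\{Z_n>0\}$ from the root towards generation $n$, the leftmost stem $K_0(T)\ldots K_n(T)$ that reaches height $n$ is well defined because $T$ is an ordered tree of height at least $n$. At each level $i$ the node $K_{i-1}$ has some offspring to the left and to the right of $K_i$; the offspring to the left must found subtrees that go extinct before generation $n$ (otherwise a leftmore stem would exist), whereas the offspring to the right may found arbitrary subtrees in the residual environment.

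The key computational step is to verify the displayed formula for $\mathbf P_\pi(R_i=r,L_i=l)$. I would compute $\mathbf P_\pi\big(\langle T\rangle_n \text{ has a prescribed local shape at level } i \ \big|\ Z_n>0\big)$ by conditioning on $Z_{i-1}=1$ (the event that $K_{i-1}$ is on the stem, together with the subtree structure above generation $i-1$ being as prescribed, factors out by the branching property and Bayes' rule). Writing $g_i(s)=f_{i,0}(s)$ for the probability of extinction by generation $n$ of a single individual present at generation $i$, i.e. $g_i=\mathbf P_\pi(Z_n=0\mid Z_i=1)$, the offspring number of $K_{i-1}$ must be $r+l+1$, the $l$ left-children must each produce an extinct line (probability $g_i^{\,l}$), the distinguished child $K_i$ must produce a surviving line, and the $r$ right-children are unconstrained. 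Summing a geometric-type factor over the unconstrained right-children and over the choice of which child is $K_i$, and dividing by $\mathbf P_\pi(Z_n>0\mid Z_{i-1}=1)=1-f_i(g_i)$, yields exactly
\[
\mathbf P_\pi(R_i=r,L_i=l)=q_i(r+l+1)\,\frac{\mathbf P_\pi(Z_n>0\mid Z_i=1)\,\mathbf P_\pi(Z_n=0\mid Z_i=1)^{l}}{\mathbf P_\pi(Z_n>0\mid Z_{i-1}=1)}\ .
\]
Once this marginal is established, the conditional independence of the pairs $(R_i,L_i)$ across levels, the unconditioned law of the right subtrees $\tau_{ij}'$ (ordinary BPRE trees in the shifted environment $(q_{i+1},q_{i+2},\ldots)$), and the conditioned law of the left subtrees $\tau_{ij}''$ (the same trees conditioned on extinction before generation $n-i$) all follow from the branching property and induction on the height, exactly as in the homogeneous case; I would spell this out by an induction on $n$, pruning at height one.

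The main obstacle, and the only place where care beyond a routine transcription of \cite{ge_99} is needed, is the bookkeeping of the ``leftmost stem'' convention under the change from $T$ to $\langle T\rangle_n$: one must check that the event that the prescribed node is genuinely the leftmost surviving line at each level translates precisely into the requirement that \emph{all} strictly-left siblings found extinct subtrees, with no further constraint on the surviving child beyond non-extinction, and that this is compatible with the ordered-tree coding of \cite{ne}. I would handle this by noting that $[\langle T\rangle_n]_n$ is a deterministic function of $[T]_n$ together with the extinction pattern of the subtrees hanging off the first $n$ generations, so that the conditional law of $\langle T\rangle_n$ given $\{Z_n>0\}$ is obtained by the stated recursive recipe; the measurability in $\pi$ (hence ``for almost all $\pi$'') is automatic since all the generating functions $f_i,g_i$ are measurable functions of the environment. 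Everything else is a level-by-level application of Bayes' formula under the fixed environment $\pi$.
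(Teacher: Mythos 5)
Your proposal is correct and follows the same approach as the paper, which in fact devotes only one sentence to this result: it cites Geiger's Proposition 2.1 in \cite{ge_99} (proved for a fixed Galton--Watson offspring law) and notes that the proof carries over verbatim to a varying environment, which is exactly the generating-function bookkeeping you spell out. One small slip: with the paper's composition notation, the extinction probability of a single individual at generation $i$ is $g_i=\mathbf P_\pi(Z_n=0\mid Z_i=1)=f_{i,n}(0)$, not $f_{i,0}(s)$; the rest of your computation (normalization $\sum_{r,l}q_i(r+l+1)g_i^{\,l}(1-g_i)=1-f_i(g_i)$, independence across levels by the branching property, and the extinction constraint on the left subtrees forced by the leftmost-stem convention) is the intended argument.
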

\noindent
Geiger  proved this result for a fixed environment $q_1=q_2 = \cdots$ i.e. in the Galton-Watson case, see Proposition 2.1 in \cite{ge_99}. His proof carries over straightforward to a varying environment.

\begin{proof}[Proof of Theorem \ref{trest}]
\mbox{}\\
For the trest $\tilde{\mathsf T}$ we introduce the notations $\tilde T_i', \tilde T_i'', \tilde R_i, \tilde L_i, \tilde \tau_{ij}', \tilde \tau_{ij}''$. They have the same meaning as above $  T_i',   T_i'',   R_i,   L_i,   \tau_{ij}',   \tau_{ij}''$ for the trest $\mathsf T_{n,\pi}$. From the construction of $\tilde{\mathsf T}$
\begin{align*}
\mathbf P_\pi (\tilde  R_i=r, \tilde L_i=l) = q_i(r+l+1) e^{-X_i} \ .
\end{align*}
$\tilde \tau_{ij}'$ and  $\tau_{ij}'$ are equal in distribution, whereas $\tilde \tau_{ij}''$ is no longer conditioned to be extinct in generation $n-i$, as this is the case for $\tau_{ij}''$. 

In order to compare both trests we will couple them.
We first consider the branching process in a fixed environment $\pi=(q_1,q_2, \ldots)$ and again write the corresponding probabilities as $\mathbf P_\pi(\cdot)$. To begin with we estimate the total variation distance between the distributions of $(R_i,L_i)$ and $(\tilde R_i, \tilde L_i)$. Note that
\begin{align*}
\mathbf P_\pi ( Z_n > 0\mid Z_{i-1}=1 )  &= \sum_{j \ge 1} \mathbf P_\pi( Z_n >0\mid Z_{i }=j) \mathbf P_\pi( Z_i=j \mid Z_{i-1}=1) \\
&\le \sum_{j \ge 1} j \mathbf P_\pi( Z_n>0 \mid Z_{i}=1)\mathbf P_\pi( Z_i=j \mid Z_{i-1}=1) \\ &= e^{X_i} \mathbf P_\pi( Z_n >0\mid Z_{i }=1) 
\end{align*}
such that for   $r,l,m \ge 0$ and $i \le n-m$
\begin{align*}
\mathbf P_\pi (\tilde R_i=r, \tilde L_i=l&)- \mathbf P_\pi (  R_i=r,   L_i=l)\\
&\le q_i(r+l+1) e^{-X_i} \big(1- \mathbf P_\pi(Z_n=0 \mid Z_{i}=1)^{l}\big)\\&\le q_i(r+l+1) e^{-X_i} l\big(1- \mathbf P_\pi(Z_n=0 \mid Z_{i}=1) \big)\\ 
&\le lq_i(r+l+1)  e^{-X_i} \mathbf P_\pi(Z_{n-m}>0\mid Z_i=1) \\
&\le lq_i(r+l+1) e^{-X_i} e^{S_{n-m}-S_i} \ .
\end{align*}
Since the right-hand side is always non-negative, we may estimate the total variation distance as 
\begin{align*}\tfrac 12 \sum_{r,l \ge 0}  \big|\mathbf P_\pi &(\tilde R_i=r, \tilde L_i=l)- \mathbf P_\pi  (  R_i=r,   L_i=l)\big|
\\&= \sum_{r,l \ge 0}  \big(\mathbf P_\pi (\tilde R_i=r, \tilde L_i=l)- \mathbf P_\pi  (  R_i=r,   L_i=l)\big)^+
 \\ &\le e^{-X_i}e^{S_{n-m}-S_i} \sum_{r,l\ge 0} lq_i(r+l+1) \\ &= e^{-X_i}e^{S_{n-m}-S_i}\tfrac 12  \sum_{y=1}^\infty y(y-1) q_i(y) = \tfrac 12 \eta_i e^{S_{n-m}-S_{i-1}} \ .
\end{align*}

Similarly we estimate the total variation distance between the distributions of $\tau_{ij}''$ and $\tilde \tau_{ij}''$.  The second distribution is equal to the first distribution conditioned to be extinct in generation $n-i$. This event can be expressed as $\{\tau_{ij}'' \in B_i\}$ with the set $B_i$ of trees of height less than $n-i$, thus for some tree $t$
\begin{align*}
\mathbf P_\pi( \tau_{ij}''=t) - \mathbf P_\pi( \tilde \tau_{ij}''=t) &= \mathbf P_\pi( \tau_{ij}''=t) - \mathbf P_\pi( \tau_{ij}''=t\mid \tau_{ij}'' \in B_i) \\ &\le \mathbf P_\pi(\tau_{ij}''=t) 1_{B_i^c}(t) \ .
\end{align*}
Again, since the right-hand side is non-negative for $i \le n-m$
\begin{align*}
\tfrac 12 \sum_t \big|&\mathbf P_\pi(  \tau_{ij}''=t) - \mathbf P_\pi( \tilde \tau_{ij}''=t)\big|  \le  \mathbf P_\pi (\tau_{ij}''\in B_i^c) \\ & = \mathbf P_\pi(Z_{n} > 0\mid Z_i=1) \le \mathbf P_\pi(Z_{n-m} > 0\mid Z_i=1) \le e^{S_{n-m}-S_i} \ .
\end{align*}

Now we consider the following construction: Take couplings of the pairs $(R_i,L_i )$, $(\tilde R_i, \tilde L_i)$ and of $\tau_{ij}''$ and $\tilde \tau_{ij}''$. Also let $\tau_{ij}'=\tilde\tau_{ij}'$. Put these components together to obtain $(T_i',T_i'')$ and $(\tilde T_i', \tilde T_i'')$. If the couplings are all independent of each other, then the   resulting trests have the required distributional properties. We denote the resulting probabilities again by $\mathbf P_\pi$. Thus
\begin{align*}
\mathbf P_\pi(&(T_i',T_i'')\neq (\tilde T_i', \tilde T_i'')   ) \\ &\le  \mathbf P_\pi((R_i,L_i )\neq (\tilde R_i, \tilde L_i)) + \sum_{r,l \ge 0}\sum_{j=1}^l \mathbf P_\pi(\tilde R_i=r,\tilde L_i=l) \mathbf P_\pi (\tau_{ij}''\neq \tilde \tau_{ij}'')\ .
\end{align*}
For optimal couplings we may use the above estimates on the total variation distance
and obtain for $i \le n-m$
\begin{align*}
\mathbf P_\pi( (T_i',T_i'')\neq (\tilde T_i', \tilde T_i'')   ) &\le \tfrac 12 \eta_i e^{S_{n-m}-S_{i-1}} + \sum_{r,l\ge 0} l q_i(r+l+1)   e^{-X_i} e^{S_{n-m}-S_i}\\ &=   \eta_i e^{S_{n-m}-S_{i-1}} \ .
\end{align*}
Altogether using Theorem \ref{geigerconstruction} and the assumption that $B_n$ depends only on $\Pi$, it follows for $m < r_n$
\[ \big| \mathbf P_\pi \big([\langle T\rangle_n]_{n-r_n} \in B_n \ \big| \ Z_n > 0\big) - \mathbf P_\pi \big( [\tilde {\mathsf T}]_{n-r_n} \in B_n \big) \big|\le 1 \wedge\sum_{i=1}^{n-r_n} \eta_i e^{S_{n-m}-S_{i-1}} \ . \]
Now from duality and from Lemmas \ref{limitEminus}, \ref{le2}
\begin{align*}
\mathbf E\Big[1 \wedge\sum_{i=1}^{n-r_n} \eta_i &e^{S_{n-m}-S_{i-1}} \mid \tau_{n-m}=n-m\Big] \\ &=
\mathbf E\Big[1 \wedge \sum_{i=r_n-m}^{n-m} \eta_i e^{S_i} \mid M_{n-m} < 0\Big] \to 0  \ . 
\end{align*}
According to our assumptions $ \mathbf E[Y_n\mathbf P_\Pi \big( [\tilde {\mathsf T}]_{n-r_n} \in B_n \big) \mid \tau_{n-m}=n-m]$ converges to $\ell$. Our estimates thus imply that
\[ \mathbf E[Y_n \mathbf P_\Pi \big([\langle T\rangle_n]_{n-r_n} \in B_n \ \big| \ Z_n > 0\big) \mid \tau_{n-m}=n-m)] \to \ell \ . \]
Thus we may apply Lemma \ref{le3} with $Y_n \mathbf P_\Pi( \big([\langle T\rangle_n]_{n-r_n} \in B_n \mid Z_n > 0\big)$ instead of $Y_n$, $\psi= 1$  to obtain
\[ \frac{\mathbf E\big[Y_n\mathbf P_\Pi \big([\langle T\rangle_n]_{n-r_n} \in B_n \mid Z_n > 0\big) ; Z_n>0\big]}{\gamma^n \mathbf P(\tau_n=n)} \to \ell p''(\mathbb N \times \mathbb N_0)\ . \]
Also from Lemma \ref{le3} with $Y_n=1$ and $\psi=1$ 
\[ \mathbb P(Z_n>0) \sim \gamma^n \mathbf P(\tau_n=n) p''(\mathbb N \times \mathbb N_0) \ , \]
thus
\[ \mathbf E\big[Y_n\mathbf P\big([\langle T\rangle_n]_{n-r_n} \in B_n \mid \Pi ,Z_n > 0\big) \ \big| \  Z_n>0)\big] \to \ell \ . \]
Now
\begin{align*}
\mathbf E\big[Y_n \mathbf P\big([\langle T\rangle_n]_{n-r_n}& \in B_n \mid \Pi ,Z_n > 0\big)\ ;  Z_n>0\big] \\
&=  \mathbf E\big[ Y_n \tfrac {\mathbf P([\langle T\rangle_n]_{n-r_n} \in B_n  ,Z_n > 0 \mid \Pi)}{\mathbf P(Z_n>0\mid \Pi)} \ ; Z_n >0\big]  \\
&=  \mathbf E\big[ Y_n \mathbf P([\langle T\rangle_n]_{n-r_n} \in B_n  ,Z_n > 0 \mid \Pi )\big] \\
&=  \mathbf E\big[  \mathbf E[Y_n;[\langle T\rangle_n]_{n-r_n} \in B_n  ,Z_n > 0 \mid \Pi ]\big] \\
&=   \mathbf E\big[Y_n;[\langle T\rangle_n]_{n-r_n} \in B_n  ,Z_n > 0  \big] \ .
\end{align*}
This  gives the claim of Theorem \ref{trest}.
\end{proof}

\section{Proof of Theorem 1.4}

Let again $\tilde {\mathsf T}$ denote the LPP-trest.
Recall that $\tilde Z_j^i$ is for $i<j$ the number of the individuals in generation $j$ other than $\tilde K_j$, which descent from $\tilde K_i$ but not from $\tilde K_{i+1}$.  For convenience we put $\tilde Z_j^i=0$ for $i \ge j$.

\begin{lemma} \label{le51}
Let $0 < t < 1$. Then for every $\varepsilon > 0$ there is a natural number $ a  $ such that for  any natural numbers $m$ and $ \varsigma \in [\tau_{nt}, nt]$
\[ \mathbf P\Big( \sum_{i: |i-\tau_{nt}| \ge a} \frac{\tilde Z_{\varsigma}^i}{e^{S_{\varsigma}-S_{\tau_{nt}}}}\ge \varepsilon \ \Big|\ \tau_{n-m}=n-m\Big) \le \varepsilon \ , \]
if $n$  is sufficiently large (depending on $\varepsilon, a$ and $m$). $\varsigma$ may be random, depending only on the random environment $\Pi$.
\end{lemma}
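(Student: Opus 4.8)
The plan is first to reduce the statement to a first-moment estimate. Since $\varsigma$ is a function of $\Pi$ alone, given $\Pi$ it is a fixed integer in $[\tau_{nt},nt]$, so \eqref{Ztilde} gives
\[ \mathbf E\Big[\sum_{i:\,|i-\tau_{nt}|\ge a}\frac{\tilde Z_{\varsigma}^i}{e^{S_{\varsigma}-S_{\tau_{nt}}}}\;\Big|\;\Pi\Big]\ =\ \sum_{i<\varsigma,\ |i-\tau_{nt}|\ge a}\eta_{i+1}\,e^{S_{\tau_{nt}}-S_i}\ . \]
Since $\tau_{nt}$ is the first minimiser of $S_0,\ldots,S_{nt}$ and $\varsigma\le nt$, every exponential on the right is $\le 1$, so the right-hand side is bounded above by $A_n^-+A_n^+$, where
\[ A_n^-=\sum_{a<k\le\tau_{nt}}\eta_{\tau_{nt}-k+1}\,e^{S_{\tau_{nt}}-S_{\tau_{nt}-k}}\ ,\qquad A_n^+=\sum_{a\le k\le nt-\tau_{nt}}\eta_{\tau_{nt}+k+1}\,e^{-(S_{\tau_{nt}+k}-S_{\tau_{nt}})}\ ; \]
in $A_n^+$ the random cut-off $\varsigma$ has been enlarged to $nt$, so it no longer appears. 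By Markov's inequality $\mathbf P(\,\cdot\ge\varepsilon\mid\Pi)\le 1\wedge\varepsilon^{-1}(A_n^-+A_n^+)$, and since $1\wedge\varepsilon^{-1}x\le\varepsilon^{-1}(1\wedge x)$, it suffices to prove that for each fixed $m$
\[ \limsup_{n\to\infty}\ \mathbf E\big[\,1\wedge(A_n^-+A_n^+)\ \big|\ \tau_{n-m}=n-m\,\big]\ \longrightarrow\ 0\qquad\text{as }a\to\infty. \]

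Next I would identify the limits of $A_n^\pm$ under the conditioning. Take the partition $0<t_1=t<1$, so that $\tau_{nt}=\sigma_{1,n}$; as in Lemma~\ref{cor}, given $\tau_{n-m}=n-m$ both $\tau_{nt}$ and $nt-\tau_{nt}$ tend to infinity in probability, so the windows below are eventually well defined. By Lemma~\ref{cor} with $r=1$ and $k=K$, for every fixed $K$ the blocks $(Q_{\sigma_{1,n}},\ldots,Q_{\sigma_{1,n}-K+1})$ and $(Q_{\sigma_{1,n}+1},\ldots,Q_{\sigma_{1,n}+K})$ converge in distribution, given $\tau_{n-m}=n-m$, to the laws of $(Q_1,\ldots,Q_K)$ under $\mathbf P^-$ and $\mathbf P^+$ respectively. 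Since $S_{\tau_{nt}}-S_{\tau_{nt}-j}=\sum_{l=1}^{j}\log m(Q_{\tau_{nt}-l+1})$ and $S_{\tau_{nt}+j}-S_{\tau_{nt}}=\sum_{l=1}^{j}\log m(Q_{\tau_{nt}+l})$ are functionals of these blocks, the truncated (and harmlessly capped) sums $1\wedge\sum_{a<k\le K}\eta_{\tau_{nt}-k+1}e^{S_{\tau_{nt}}-S_{\tau_{nt}-k}}$ and $1\wedge\sum_{a\le k\le K}\eta_{\tau_{nt}+k+1}e^{-(S_{\tau_{nt}+k}-S_{\tau_{nt}})}$ have conditional expectations converging, as $n\to\infty$, to $\mathbf E^-\big[1\wedge\sum_{a<k\le K}\eta_k\,e^{S_k}\big]$ and $\mathbf E^+\big[1\wedge\sum_{a\le k\le K}\eta_{k+1}\,e^{-S_k}\big]$; this is the same mechanism as in Lemma~\ref{leQ}, using that the limiting backward walk $(S_{\tau_{nt}}-S_{\tau_{nt}-k})_k$ is distributed as $(S_k)_k$ under $\mathbf P^-$ and the limiting forward walk $(S_{\tau_{nt}+k}-S_{\tau_{nt}})_k$ as $(S_k)_k$ under $\mathbf P^+$.

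The remaining step, which is the main obstacle, is a uniform-in-$n$ bound on the tails beyond level $K$: one must show that
\[ \limsup_{n\to\infty}\mathbf E\Big[\sum_{K<k\le\tau_{nt}}\eta_{\tau_{nt}-k+1}e^{S_{\tau_{nt}}-S_{\tau_{nt}-k}}+\sum_{K<k\le nt-\tau_{nt}}\eta_{\tau_{nt}+k+1}e^{-(S_{\tau_{nt}+k}-S_{\tau_{nt}})}\ \Big|\ \tau_{n-m}=n-m\Big]\ \longrightarrow\ 0 \]
as $K\to\infty$. Since Lemma~\ref{cor} controls only finitely many coordinates of the environment, this needs a separate argument, and I would re-run the duality reduction behind Lemmas~\ref{leQ} and~\ref{cor}: decompose according to the value of $\tau_{nt}=\sigma_{1,n}=r$; for the backward sum pass to the dual walk on $[0,r]$, which turns $\{\tau_{nt}=r\}$ into $\{M_r<0\}$, and for the forward sum use that $S$ stays $\ge S_r$ on $[r,nt]$, i.e. $\{L_{n-r}\ge 0\}$; normalise by $\mathbf P(M_r<0)$ and $\mathbf P(L_{n-r}\ge 0)$, which are regularly varying by Lemmas~\ref{maxim} and~\ref{pro1}, and apply Lemma~\ref{limitEminus} (both parts) together with dominated convergence. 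In the limit the two tails become $\mathbf E^-\big[\sum_{k>K}\eta_k\,e^{S_k}\big]$ and $\mathbf E^+\big[\sum_{k>K}\eta_{k+1}\,e^{-S_k}\big]$, which tend to $0$ as $K\to\infty$ by Lemma~\ref{le2}. Combining the three steps, for every fixed $a$
\[ \limsup_{n\to\infty}\mathbf E\big[\,1\wedge(A_n^-+A_n^+)\mid\tau_{n-m}=n-m\,\big]\ \le\ \mathbf E^-\Big[1\wedge\sum_{k>a}\eta_k\,e^{S_k}\Big]+\mathbf E^+\Big[1\wedge\sum_{k\ge a}\eta_{k+1}\,e^{-S_k}\Big], \]
and the right-hand side tends to $0$ as $a\to\infty$, once more by Lemma~\ref{le2}. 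Markov's inequality then gives the assertion of the lemma.
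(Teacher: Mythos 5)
Your proof is correct and rests on the same mechanism as the paper's: Markov's inequality reduces to a first-moment estimate via \eqref{Ztilde}; one then decomposes on the value of $\tau_{nt}$, which by duality turns the backward piece into an event $\{M_j<0\}$ and leaves the forward piece with $\{L_k\ge 0\}$; Lemma~\ref{limitEminus} and Lemma~\ref{le2} then make each normalized conditional expectation small once $a$ is large, and regular variation of $\mathbf P(\tau_n=n)$, $\mathbf P(L_n\ge 0)$, $\mathbf P(M_n<0)$ closes the argument. The one genuine difference is your middle paragraph: you first invoke Lemma~\ref{cor} to identify the weak limit of the $a<k\le K$ truncations, and only then run the duality argument to control the $k>K$ tail. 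This detour is superfluous --- as you yourself note, Lemma~\ref{cor} controls only finitely many coordinates, so the real work is in the tail estimate, and that tail estimate (duality plus Lemmas~\ref{limitEminus} and~\ref{le2}) already applies directly to the full sum $\sum_{k>a}$. The paper does exactly that: it chooses $a$ so large that $\mathbf E\bigl[1\wedge\sum_{i=a}^{j}\eta_i e^{S_i};M_j<0\bigr]\le\delta\,\mathbf P(M_j<0)$ and the analogous bound with $L_k\ge 0$ hold for all $j,k>a$, and then sums $\sum_j\mathbf P(\tau_j=j)\mathbf P(L_{nt-j}\ge 0)\le 1$. So your route is valid but one step longer; also note that in your ``Step~3'' the quantities $\mathbf E^-\bigl[\sum_{k>K}\eta_k e^{S_k}\bigr]$, $\mathbf E^+\bigl[\sum_{k>K}\eta_{k+1}e^{-S_k}\bigr]$ need the $1\wedge$ cap (which you correctly restore in the final display) since without it integrability under $\mathbf P^\pm$ is not guaranteed by Lemma~\ref{le2}, which only gives a.s.\ finiteness.
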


\begin{proof}  For $0<\varepsilon \le 1$ from Markov inequality  and \eqref{Ztilde}
\begin{align*}
\varepsilon \mathbf P\Big(&\sum_{ |i-\tau_{nt}| \ge a}  \frac{\tilde Z_{\varsigma}^i}{e^{S_{\varsigma}-S_{\tau_{nt}}}} \ge \varepsilon ;\tau_{n-m}={n-m}\Big) \\ &\le   \mathbf E\Big[1  \wedge \sum_{i\le \varsigma, |i-\tau_{nt}| \ge a} \eta_{i+1} e^{S_{\tau_{nt}}-S_i}  ; \tau_{n-m}=n-m\Big] \ .
\end{align*}
Next we decompose with the value of $\tau_{nt}$ to obtain for $m \le (1-t)n$
\begin{align*}
\varepsilon& \mathbf P\Big( \sum_{  |i-\tau_{nt}| \ge a}  \frac{\tilde Z_{\varsigma}^i}{e^{S_{\varsigma}-S_{\tau_{nt}}}} \ge \varepsilon ;\tau_{n-m}={n-m}\Big) \\ &\le \sum_{  j \le nt} \mathbf E\Big[1 \wedge \sum_{i \le \varsigma, |i-j| \ge a} \eta_{i+1} e^{S_j-S_i}; \tau_j=j, L_{j,nt} \ge 0\Big] \\ &\qquad\qquad\qquad \times  \mathbf P\big(\tau_{(1-t)n-m}=\lfloor (1-t)n\rfloor-m\big) \ .
\end{align*}
We split the expectation:
\begin{align*}
\sum_{  j \le nt} &\mathbf E\Big[1 \wedge \sum_{ i\le \varsigma,  |i-j| \ge a } \eta_{i+1} e^{S_j-S_i}; \tau_j=j, L_{j,nt} \ge 0\Big] = \\
& =\sum_{   j \le nt}  \mathbf E\Big[1 \wedge \sum_{i=0}^{j-a} \eta_{i+1}e^{S_j-S_i}; \tau_j=j\Big]\mathbf P( L_{nt-j} \ge 0) \\
&\qquad\qquad \mbox{} + \sum_{  j \le nt}  \mathbf P(\tau_j=j) \mathbf E\Big[1 \wedge \sum_{i=j+a}^\varsigma \eta_{i+1} e^{S_j-S_i};  L_{j,nt} \ge 0\Big] \ .
\end{align*}
Duality yields
\begin{align*}
\sum_{  j \le nt} &\mathbf E\Big[1 \wedge \sum_{i\le \varsigma,  |i-j| \ge a } \eta_{i+1} e^{S_j-S_i}; \tau_j=j, L_{j,nt} \ge 0\Big] = \\
& \le \sum_{ a \le j \le nt}  \mathbf E\Big[1 \wedge \sum_{i=a}^{j } \eta_{i }e^{S_i}; M_j<0\Big]\mathbf P( L_{nt-j} \ge 0) \\
&\qquad\qquad \mbox{} + \sum_{ a \le k \le nt}  \mathbf P(\tau_{nt-k}=\lfloor nt\rfloor -k) \mathbf E\Big[1 \wedge \sum_{i= a}^{k} \eta_{i+1} e^{ -S_i};  L_{ k} \ge 0\Big]\ .
\end{align*}
From Lemmas \ref{limitEminus}, \ref{le2} we may choose $a$ so large that
\begin{align*} \mathbf E\Big[1 \wedge \sum_{i=a}^{j } \eta_{i } e^{S_i}; M_j<0\Big] &\le \delta \mathbf P(M_j < 0) \\
\mathbf E\Big[1 \wedge \sum_{i= a}^{k} \eta_{i+1} e^{ -S_i};  L_{ k} \ge 0\Big] &\le \delta \mathbf P(L_k \ge 0)
\end{align*}
for all $j,k>a$ and given $\delta >0$.
It follows from duality
\begin{align*} \sum_{  j \le nt}  &\mathbf E\Big[1 \wedge \sum_{ i \le \varsigma, |i-j| \ge a } \eta_{i+1} e^{S_j-S_i}; \tau_j=j, L_{j,nt} \ge 0\Big]  \\& \le \delta \sum_{ a \le j \le nt}  \mathbf P(\tau_j=j)\mathbf P( L_{nt-j} \ge 0) \\
&\qquad\qquad \mbox{} + \delta \sum_{ a \le k \le nt}  \mathbf P(\tau_{nt-k}=\lfloor nt\rfloor-k) \mathbf P( L_{ k} \ge 0) \le 2 \delta 
\end{align*}
and 
\begin{align*}
\mathbf P\Big(  \sum_{  |i-\tau_{nt}| \ge a} \frac{\tilde Z_{\varsigma}^i}{e^{S_{\varsigma}-S_{\tau_{nt}}}}&\ge \varepsilon;\tau_n=n\Big)\\
&\le \frac{2\delta}\varepsilon  \mathbf P\big(\tau_{(1-t)n-m}= \lfloor(1-t)n\rfloor-m\big) \ . 
\end{align*}
Since $\mathbf P(\tau_n=n)$ is regularly varying, the right-hand side is bounded by the term $\varepsilon \mathbf P(\tau_n=n)$, if $\delta$ is chosen small enough. This gives the claim.
\end{proof}

We now come to the proof of the first part of Theorem \ref{theomain}. Let
$ \sigma_{i,n}$ as in \eqref{sigma} 
and define $\mu_n(i)$ as the smallest natural number $j$ between $1$ and $i$ such that $ \tau_{nt_i}= \sigma_{j,n} $,  
\begin{align}\mu_n(i)= \min\{ j \le i : \tau_{nt_i}= \sigma_{j,n}  \} \ . 
\label{mun}
\end{align}
Again let $\tilde Z_j$ be the number of individuals in generation $j$ of the LPP-trest $\tilde{\mathsf T}$, thus
\[ \tilde Z_j = 1+ \sum_{k=0}^{j-1} \tilde Z_j^k \ . \]
Therefore, given $ \varepsilon >0 $ in view of the preceding lemma with $\varsigma= \tau_{nt}$ there is a natural number $a$ such that given $\tau_{n-m} = n-m$ the probability  is at least $1-\varepsilon$ that the event   
\[ \tilde Z_{\tau_{nt_i}} = 1 + \sum_{|k-\tau_{nt_i}| \le a}\tilde  Z_{\tau_{nt_i}}^k=1 + \sum_{k=\sigma_{\mu_n(i),n}-a}^{\sigma_{\mu_n(i),n}} \tilde Z_{\sigma_{\mu_n(i),n}}^k  \]
holds for all $i=1, \ldots, r$. Now note that given the environment $\Pi$ the distribution of
\[ 1 + \sum_{k=\sigma_{j,n}-a} ^{\sigma_{j,n}}\tilde Z_{\sigma_{j,n}}^k \]
only depends on $(Q_{\sigma_{j,n}-a}, \ldots, Q_{\sigma_{j,n}})$. Lemma \ref{cor} says that given $\tau_{n-m}=n-m $ these random vectors are asymptotically i.i.d. Also this lemma gives asymptotic independence of these random variables from 
\[ \tfrac 1{a_n} (S_{\sigma_{1,n}}, S_{nt_1},\ldots,S_{\sigma_{r,n}}, S_{nt_r})\ , \]
which in turn determines $\mu_n(1),\ldots, \mu_n(r)$. Finally in view of Lemma \ref{funclimit} $\mu_n(1),\ldots, \mu_n(r)$ converges in distribution to $\mu(1), \ldots, \mu(r)$. 

These observations hold for every $\varepsilon >0$. Therefore we may summarize our discussion as follows: For all $m \ge 1$
\begin{align*}
\big(( \tilde Z_{\tau_{nt_1}},\ldots,\tilde Z_{\tau_{nt_r}}) \mid \tau_{n-m}=n-m \big) \stackrel{d}{\to} (V_{\mu(1)}, \ldots, V_{\mu(r)}) \ ,
\end{align*}
where the right-hand term has just the properties as given in Theorem \ref{theomain}. Now Theorem \ref{trest} gives the claim.\\\\
The proof of the second part of Theorem \ref{theomain} is prepared by the following lemma. Let for fixed $a$
\[  \hat Z_{a,k} = \sum_{i:|i- \tau_{nt}| \le a} \tilde Z_{k}^i  \]
and
\[ \alpha_{a,n}=e^{S_{\tau_{nt}}-S_{nt}} \hat Z_{a,nt}  \ , \  \beta_{a,n}= e^{S_{\tau_{nt}}-S_{\tau_{nt}+a}} \hat Z_{a, \tau_{nt}+a}\ . \]

\begin{lemma} \label{le52}
Let $m \ge 1$, $\varepsilon >0$ and $0<t<1$. Then, if $a$ is sufficiently large
\[ \limsup_{n \to \infty} \mathbf P( |\alpha_{a,n}-\beta_{a,n}| > \varepsilon \mid \tau_{n-m}=n-m) \le \varepsilon\ . \]
\end{lemma}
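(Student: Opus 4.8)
The plan is to exploit that both $\alpha_{a,n}$ and $\beta_{a,n}$ are obtained by summing the finitely many (at most $2a+1$) contributions $\tilde Z^i$ with $|i-\tau_{nt}|\le a$, evaluated at two different times $\tau_{nt}+a$ and $nt$, and normalised by $e^{-(S_{nt}-S_{\tau_{nt}})}$ resp.\ $e^{-(S_{\tau_{nt}+a}-S_{\tau_{nt}})}$. For a \emph{fixed} index $i$ with $i\le \tau_{nt}+a$, the process $k\mapsto e^{-(S_k-S_{\tau_{nt}})}\tilde Z^i_k$ is, given the environment, a non-negative martingale for $k\ge i+1$ (as used in the proof of Lemma \ref{le41}), and its conditional expectation given $\Pi$ equals $\eta_{i+1}e^{S_{\tau_{nt}}-S_i}$ by \eqref{Ztilde}. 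Hence $\beta_{a,n}$ is the value of this sum of martingales at time $\tau_{nt}+a$ and $\alpha_{a,n}$ its value at time $nt$; their difference is a martingale increment over the time interval $[\tau_{nt}+a,\,nt]$.

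First I would write
\[ \alpha_{a,n}-\beta_{a,n} = \sum_{i:\,|i-\tau_{nt}|\le a}\Big( e^{-(S_{nt}-S_{\tau_{nt}})}\tilde Z^i_{nt} - e^{-(S_{\tau_{nt}+a}-S_{\tau_{nt}})}\tilde Z^i_{\tau_{nt}+a}\Big)\ , \]
split it into the part carried by the stem individual $\tilde K$ at generation $\tau_{nt}$ (which contributes $e^{-(S_{nt}-S_{\tau_{nt}})}$ times the offspring of a size-biased subtree started at time $\tau_{nt}+a$, minus $1$, so this part is handled by the submartingale/Doob estimate exactly as in Lemma \ref{le51} and Lemma \ref{le41}) and the genuine side-tree contributions. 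For the side-tree part, conditionally on $\Pi$ and on $(S_k)_{k\le \tau_{nt}+a}$, each summand is a centred martingale increment, so by orthogonality the conditional second moment of $\alpha_{a,n}-\beta_{a,n}$ is controlled by $\sum_{i}\eta_{i+1}e^{2(S_{\tau_{nt}}-S_i)}$ times a factor coming from the variance of the offspring laws in the favourable environment after time $\tau_{nt}+a$; because after $\tau_{nt}$ the random walk increases ($S_k\ge S_{\tau_{nt}}$ for $k\in[\tau_{nt},nt]$), this variance contribution is dominated by the product of $e^{-(S_{nt}-S_{\tau_{nt}})}$ and a convergent series of the $\eta$'s, which is small once $a$ is large. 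The precise route is: decompose on the value of $\tau_{nt}=j$ and on $\tau_{(1-t)n-m}$, as in Lemma \ref{le51}; apply duality to turn the conditioning $\tau_{n-m}=n-m$ into $M_{n-m}<0$; then invoke Lemma \ref{limitEminus} together with $\sum_{i\ge a}\eta_i e^{S_i}<\infty$ $\mathbf P^-$-a.s.\ (Lemma \ref{le2}) and $\sum_{i\ge a}\eta_{i+1}e^{-S_i}<\infty$ $\mathbf P^+$-a.s.\ to make the relevant truncated sums arbitrarily small in expectation, uniformly in $n$; finally use that $\mathbf P(\tau_n=n)$ is regularly varying to absorb the remaining combinatorial factor $\mathbf P(\tau_{(1-t)n-m}=\lfloor(1-t)n\rfloor-m)$ exactly as at the end of the proof of Lemma \ref{le51}. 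A Markov/Chebyshev step then converts the $L^1$ (or $L^2$) bound into the stated probability bound.

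The main obstacle will be the side-tree martingale difference estimate: one must check that, conditioned on the environment and on the walk up to time $\tau_{nt}+a$, the increment $e^{-(S_{nt}-S_{\tau_{nt}})}\tilde Z^i_{nt}-e^{-(S_{\tau_{nt}+a}-S_{\tau_{nt}})}\tilde Z^i_{\tau_{nt}+a}$ really is centred and that its conditional second moment is bounded by $\eta_{i+1}e^{2(S_{\tau_{nt}}-S_i)}\big(\text{tail of }\sum_{l>\tau_{nt}+a}\eta_l e^{-(S_l-S_{\tau_{nt}})}\big)$ up to constants; this requires the second-moment bound on $\tilde Z^i$ which in turn needs Assumption A3 (the $\zeta(a)$ moment condition) — this is where the spectrally-positive/stable regularity and A3 enter. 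Once that variance bound is in place, everything else parallels the already-proved Lemmas \ref{le51}, \ref{le41} and the decomposition technique is routine. An alternative, and perhaps cleaner, route avoiding second moments: bound $|\alpha_{a,n}-\beta_{a,n}|\wedge 1$ directly by the sum of the two one-sided tail quantities $e^{-(S_{nt}-S_{\tau_{nt}})}\sum_{|i-\tau_{nt}|\le a}(\tilde Z^i_{nt}-\text{``its limit''})$ and the analogous quantity at time $\tau_{nt}+a$, each of which is small by the a.s.\ martingale convergence $e^{-S_k}\tilde Z^i_k\to W^i$ from Lemma \ref{le41}; then one only needs Lemma \ref{limitEminus} to pass to the limit under the conditioning, and no A3-dependent second moment is needed beyond what Lemma \ref{le2} already supplies.
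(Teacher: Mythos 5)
Your overall strategy is the same as the paper's: interpret $\beta_{a,n}$ as a conditional expectation of $\alpha_{a,n}$ and apply a Chebyshev/second-moment argument, decompose over $\tau_{nt}$, turn the conditioning on $\{\tau_{n-m}=n-m\}$ into $\{M_{n-m}<0\}$ by duality, and then invoke Lemma~\ref{limitEminus}, Lemma~\ref{le2}, and regular variation of $\mathbf P(\tau_n=n)$. The paper, however, does not decompose per side-tree index $i$ with an orthogonality argument; it views $\hat Z_{a,k}$ for $k\ge\tau_{nt}+a$ as a \emph{single} branching process in varying environment started from $\hat Z_{a,\tau_{nt}+a}$ individuals and applies the standard BPVE variance formula \eqref{variance}, which gives directly $\mathbf{Var}(\alpha_{a,n}\mid\Pi,\hat Z_{a,\tau_{nt}+a})\le \beta_{a,n}\bigl(e^{-(S_{nt}-S_{\tau_{nt}})}+\sum_{i=\tau_{nt}+a}^{\lfloor nt\rfloor}\eta_{i+1}e^{-(S_i-S_{\tau_{nt}})}\bigr)$. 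This is both cleaner and avoids the incorrect squared exponent $e^{2(S_{\tau_{nt}}-S_i)}$ in your sketch (the correct weight is linear). Also, $\hat Z_{a,k}$ does not contain the stem individual, so the ``part carried by the stem'' you propose to split off simply isn't there.

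The genuine gap is that your plan misses the preliminary step of controlling $\beta_{a,n}$ itself: one must first show there is a constant $d<\infty$, independent of $a$, such that $\limsup_n\mathbf P(\beta_{a,n}>d\mid\tau_{n-m}=n-m)<\varepsilon/2$. This is essential because the conditional variance above is proportional to $\beta_{a,n}$, which is a random variable that is not a.s.\ bounded; without first restricting to $\{\beta_{a,n}\le d\}$ one cannot truncate the variance bound by $d\cdot(\text{small tail})$ and hence cannot use the $1\wedge\cdot$ device that makes Lemma~\ref{limitEminus}/\ref{le2} applicable after the duality step. Absent this step, a direct unconditional second-moment computation produces a product of two random sums, one of which is not uniformly bounded, and the truncation does not interact well with the variance computation, so the Markov/Chebyshev step does not close. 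Your ``cleaner alternative'' via the a.s.\ martingale convergence from Lemma~\ref{le41} has a related problem: that convergence is established under $\mathbf P^+$, and the transfer to the conditioning $\tau_{n-m}=n-m$ is exactly what requires the quantitative control supplied by Lemma~\ref{limitEminus} and the boundedness of $\beta_{a,n}$, so it offers no shortcut.
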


\begin{proof} Because of  Markov inequality and \eqref{Ztilde}
\begin{align*}
\mathbf P( \beta_{a,n}&> d \mid \tau_{n-m}=n-m)\\
&\le \mathbf P (e^{S_{\tau_{nt}}-S_{\tau_{nt}+a}}  \mathbf E[\hat Z_{a,\tau_{nt}+a} \mid \Pi] > \sqrt d \mid \tau_{n-m}=n-m) + \frac 1{\sqrt d}\\
&\le \mathbf P \Big( \sum_{i:|i- \tau_{nt}| \le a}  \eta_{i+1} e^{S_{\tau_{nt}}-S_i} > \sqrt d \ \Big|\ \tau_{n-m}=n-m\Big) + \frac 1{\sqrt d}\ .
\end{align*}
From Lemma  \ref{cor} (with $r=1$, thus $\sigma_{1,n}=\tau_{nt}$) it follows that the sum converges in distribution for $n \to \infty$ and 
\begin{align*}
\limsup_{n \to \infty} \mathbf P( \beta_{a,n}&> d \mid \tau_{n-m}=n-m) \\
&\le \mathbf P^-\Big( \sum_{i \ge 1} \eta_i e^{S_i} \ge \sqrt d\Big) + \mathbf P^+\Big( \sum_{i \ge 0} \eta_{i+1} e^{-S_i} \ge \sqrt d\Big) + \frac 1{\sqrt d} \ . 
\end{align*}
Therefore from Lemma \ref{le2} it results that there is a $d<\infty$ such that for all $a>0$
\[ \limsup_{n \to \infty} \mathbf P(  \beta_{a,n} > d \mid \tau_{n-m}=n-m) < \varepsilon/2 \ . \]
Moreover  from Lemma \ref{funclimit}  $t- \tfrac 1n \tau_{nt}$ converges in distribution to a strictly positive random variable, thus $\mathbf P(\tau_{nt}+a \ge nt \mid \tau_{n-m}=n-m) \to 0$ for $n \to \infty$. Therefore
\begin{align*}
\mathbf P(& |\beta_{a,n}-\alpha_{a,n}| > \varepsilon \mid \tau_{n-m}=n-m) \\ &\le \frac \varepsilon 2 + \mathbf P\big( |\alpha_{a,n}-\beta_{a,n}| > \varepsilon, \beta_{a,n} \le d,\tau_{nt}+a \le nt \mid \tau_{n-m}=n-m\big) \ .
\end{align*}

Now, given $\Pi$, $\hat Z_{a,\tau_{nt}+a}$ and $\tau_{nt}+a \le nt$, the process $\hat Z_{a,k}$, $k \ge \tau_{nt}+a$ is a branching process in varying environment. 
Therefore $\mathbf E[ \alpha_{a,n} \mid \Pi, \hat Z_{a,\tau_{nt}+a}] =  \beta_{a,n}  $ a.s. Also the branching property  yields
\begin{align}\label{variance}
\frac{\mathbf {Var}(Z_n \mid Z_0=z,\Pi)}{\mathbf E[Z_n \mid Z_0=1,\Pi]^2} =z\Big( e^{-S_n}+\sum_{i=0}^{n-1} \eta_{i+1}e^{-S_i}-1\Big) \ , 
\end{align}
therefore on $\tau_{nt}+a \le nt$
\begin{align*}
\varepsilon^{ 2}\mathbf P( &|\beta_{a,n}-\alpha_{a,n}| > \varepsilon \mid \Pi, \hat Z_{a,\tau_{nt}+a})  \le \mathbf E[(\beta_{a,n}-\alpha_{a,n})^2 \mid \Pi, \hat Z_{a,\tau_{nt}+a}]\\
& \le \hat Z_{a,\tau_{nt}+a}  e^{2(S_{\tau_{nt} }-S_{\tau_{nt}+a})}\Big(e^{-(S_{nt}-S_{\tau_{nt}+a})}+\sum_{i=\tau_{nt}+a}^{\lfloor nt\rfloor} \eta_{i+1}e^{-(S_i-S_{\tau_{nt}+a} )}\Big)\\
&= \beta_{a,n} \Big(e^{-(S_{nt}-S_{\tau_{nt}})}+ \sum_{i=\tau_{nt}+a}^{\lfloor nt\rfloor} \eta_{i+1}e^{-(S_i-S_{\tau_{nt} } )}\Big)
\end{align*}
Inserting this estimate we obtain
\begin{align*}
\mathbf P(& |\beta_{a,n}-\alpha_{a,n}| > \varepsilon ; \tau_{n-m}=n-m) \\
&\le \frac \varepsilon 2 \mathbf P(\tau_{n-m}=n-m)+ \frac{d}{\varepsilon^2} \mathbf E\Big[1\wedge \Big(e^{-(S_{nt}-S_{\tau_{nt}})}\\
&\qquad\qquad \mbox{} + \sum_{i=\tau_{nt}+a}^{\lfloor nt\rfloor} \eta_{i+1}e^{-(S_i-S_{\tau_{nt} } )}\Big); \tau_{nt}+a \le nt  , \tau_{n-m}=n-m\Big]\\
&\le \frac \varepsilon 2 \mathbf P(\tau_{n-m}=n-m)+ \frac{d}{\varepsilon^2} \sum_{j\le nt-a}\mathbf P( \tau_{nt}=j) \mathbf E\Big[1\wedge \Big(e^{-S_{nt-j}}\\
&\qquad\qquad \mbox{} + \sum_{i= a}^{\lfloor nt\rfloor-j} \eta_{i+1}e^{-S_i}\Big)  ; L_{nt-j}\ge 0\Big] \mathbf P\big(\tau_{n(1-t)-m}=\lfloor n(1-t)\rfloor-m\big)\ .
\end{align*}

From Lemmas \ref{pro1}, \ref{limitEminus}, \ref{le2} together with the fact that $\mathbf P(\tau_n=n)$ is regularly varying our claim follows for $a$ sufficiently large.
\end{proof}

\noindent
We are now ready to finish the proof of Theorem \ref{theomain}. We first treat the case $r=1$. From $\tilde Z_{nt}=1 + \hat Z_{a,nt}+ \sum_{i: |i-\tau_{nt}|> a}  \tilde Z_{nt}^i  $
\begin{align*}
\mathbf P\big(  | e^{S_{\tau_{nt}}-S_{nt}} &\tilde Z_{nt}- \beta_{a,n} | \ge 3\varepsilon \mid \tau_{n-m}=n-m)\\
&\le \mathbf P( e^{S_{\tau_{nt}}-S_{nt}} \ge \varepsilon\mid \tau_{n-m}=n-m ) \\
&\mbox{}\quad + \mathbf P( |\alpha_{a,n}-\beta_{a,n}| \ge \varepsilon \mid \tau_{n-m}=n-m) \\
&\mbox{}\quad  + 
\mathbf P\Big( e^{S_{\tau_{nt}} -S_{nt}} \sum_{i: |i-\tau_{nt}|> a}  \tilde Z_{nt}^i \ge \varepsilon\ \Big|\ \tau_{n-m}=n-m\Big) \ .
\end{align*}

From Lemma \ref{funclimit} it results that 
\[ \mathbf P( e^{S_{\tau_{nt}}-S_{nt}} \ge \varepsilon \mid \tau_{n-m}=n-m)= \mathbf P\big( \tfrac {S_{\tau_{nt}}-S_{nt}}{a_n} \ge \tfrac{\log \varepsilon}{a_n} \mid \tau_{n-m}=n-m\big) \to 0 \ . \] 
Together with Lemmas \ref{le51}, \ref{le52}  it follows that for all $\varepsilon >0$ there is a natural number $a$ such that 
\[ \mathbf P\big( | e^{S_{\tau_{nt}}-S_{nt}} \tilde Z_{nt}- \beta_{a,n}| \ge 3\varepsilon \mid \tau_{n-m}=n-m\big) \le 3\varepsilon \]
for large $n$.

Now from Lemma \ref{cor} we see that $\beta_{a,n}$,   conditioned on $\tau_{n-m}=n-m$,  converges in distribution for every $a$. This implies that $e^{S_{\tau_{nt}}-S_{nt}} \tilde Z_{nt}$ conditioned on $\tau_{n-m}=n-m$ converges in distribution.
Moreover from  Lemma \ref{le41}  there is a $\delta >0$ such that
\[ \mathbf P^+ \Big(e^{ - S_{ a}} \sum_{1  \le i \le a } \tilde Z_{  a}^i< \delta \Big) < \varepsilon \ ,\]
if only $a$ is sufficiently large. Then from Lemma \ref{cor}  
\[ \mathbf P( \beta_{a,n}< \delta \mid \tau_{n-m}=n-m ) < \varepsilon \ , \]
if only $n$ is sufficiently large. Therefore the limiting distribution of $e^{S_{\tau_{nt}}-S_{nt}} \tilde Z_{nt}$ conditioned on $\tau_{n-m}=n-m$ has no atom in zero. An application of Theorem \ref{trest} now gives the claim for $r=1$.

Finally for $r>1$ we let 
\[ \beta_{a,n,i} = e^{S_{\sigma_{i,n}}-S_{\sigma_{i,n}+a}}\hat Z_{a,\sigma_{i,n}+a} \ , \quad i=1, \ldots, r \ .\] 
From \eqref{mun} and our considerations above we know that for every $i \le r$
\[ \mathbf P\big( | e^{S_{\tau_{nt_i}}-S_{nt_i}} \tilde Z_{nt_i}- \beta_{a,n,\mu_n(i)}| \ge \varepsilon \text{ for some } i \le r \mid \tau_{n-m}=n-m\big) \le \tfrac\varepsilon r  \]
and the rest of the theorem follows by means of Lemma \ref{cor} and Theorem \ref{trest}.

%

%

%
 
\end{document}